\numberwithin{equation}{section}
\newcommand*{\mailto}[1]{\href{mailto:#1}{\nolinkurl{#1}}}
\newtheorem{theorem}{Theorem}[section]
\newtheorem{proposition}[theorem]{Proposition}
\newtheorem{lemma}[theorem]{Lemma}
\newtheorem{corollary}[theorem]{Corollary}
\newtheorem{definition}[theorem]{Definition}
\newtheorem{hypothesis}[theorem]{Hypothesis}
\newtheorem{problem}[theorem]{Open Problem} 
\theoremstyle{remark}
\newenvironment{remark}[1][]{\refstepcounter{theorem}\par\medskip\noindent\textit{Remark~$\theexample. #1$} \rmfamily}{{\ }\hfill $\diamond$ \vspace{6pt}}%Define remark environment with diamond at the end
\DeclareMathOperator{\dom}{dom}
\DeclareMathOperator{\supp}{supp}
\newcommand{\R}{\mathbb{R}}
\newcommand{\N}{\mathbb{N}}
\newcommand{\C}{\mathbb{C}}
\newcommand{\g}{\gamma}
\renewcommand{\d}{\delta}
\renewcommand{\a}{\alpha}
\renewcommand{\b}{\beta}
\newcommand{\wt}{\widehat \theta}
\newcommand{\wphi}{\widehat \varphi}
\newcommand{\ACl}{{AC_{loc}((a,b))}}
\newcommand{\Ll}{{L^1_{loc}((a,b);dx)}}
\newcommand{\Lr}{{L^2((a,b); r(x)dx)}}
\newcommand{\no}{\notag}
\newcommand{\floor}[1]{\lfloor #1 \rfloor}
\newcommand{\np}{v}
\title[Finer limit circle/limit point classification]{Finer limit circle/limit point classification for Sturm--Liouville operators}
\author[M. Piorkowski]{Mateusz Piorkowski}
\address{Department of Mathematics, KU Leuven \\
Celestijnenlaan 200B, 3001 Leuven, Belgium}
\email{\href{mailto:Mateusz.Piorkowski@kuleuven.be}{Mateusz.Piorkowski@kuleuven.be}}
\urladdr{\url{https://sites.google.com/view/mateuszpiorkowski/home}}
\author[J.\ Stanfill]{Jonathan Stanfill}
\address{Department of Mathematics, The Ohio State University \\
100 Math Tower, 231 West 18th Avenue, Columbus, OH 43210, USA}
\email{\mailto{stanfill.13@osu.edu}}
\urladdr{\url{https://u.osu.edu/stanfill-13/}}
\date{\today}
\subjclass[2020]{Primary: 34B20, 34B24, 34L05, 47A10; Secondary: 34B30, 34C10, 34C11.}
\keywords{Singular Sturm--Liouville operators, Schr\"odinger operators, spectral theory, Weyl $m$-functions, Darboux transformation, trace class.}
\begin{document}

\begin{abstract}
In this paper we introduce an index $\ell_c \in \mathbb{N}_0 \cup \lbrace \infty \rbrace$ which we call the `regularization index' associated to the endpoints, $c\in\{a,b\}$, of nonoscillatory Sturm--Liouville differential expressions with trace class resolvents. This notion extends the limit circle/limit point dichotomy in the sense that $\ell_c~=~0$ at some endpoint if and only if the expression is in the limit circle case. In the limit point case $\ell_c>0$, a natural interpretation in terms of iterated Darboux transforms is provided. We also show stability of the index $\ell_c$ for a suitable class of perturbations, extending earlier work on perturbations of spherical Schr\"odinger operators to the case of general three-coefficient Sturm--Liouville operators. We demonstrate our results by considering a variety of examples including generalized Bessel operators, Jacobi differential operators, and Schr\"odinger operators on the half-line with power potentials.
\end{abstract}

\maketitle

\section{Introduction}

The well-known limit circle/limit point classification introduced by Weyl tells us how many boundary conditions are necessary to define a self-adjoint realization of a three-coefficient Sturm--Liouville differential expression given by
\begin{align*}
    \tau = \dfrac{1}{r(x)}\Big[-\dfrac{d}{dx} p(x)  \dfrac{d}{dx} + q(x) \Big] \ \text{ for a.e.~$x\in(a,b) \subseteq \R$.}
\end{align*}
(see Hypothesis \ref{HypoInt} for details). Limit point implies no boundary conditions are needed at the given endpoint, while limit circle implies one boundary condition is needed. Since the introduction by Weyl, the limit circle/limit point dichotomy of Sturm--Liouville expressions has been thoroughly studied- for recent treatments of Sturm--Liouville theory with encyclopedic references, we refer the interested reader to \cite{GNZ23} and \cite{Ze05}. 

The purpose of the present paper is to study an extension of the classic binary classification of limit circle/limit point in the case of nonoscillatory Sturm--Liouville differential expressions. This is achieved by means of the regularization index which gives a natural finer classification of certain limit point endpoints: those with finite regularization index lead to eigenvalues satisfying Weyl asymptotics (i.e., grow like $n^2$); those with infinite regularization index have eigenvalues satisfying growth quicker than $n$ (i.e., trace class resolvent); those for which the regularization index is undefined have nonempty essential spectrum and/or eigenvalues that grow slower than the above.

In order to define the regularization index for the endpoint $a$ $($resp., $b)$ we need to assume $\tau |_{(a,c)}$ $($resp., $\tau |_{(c,b)})$ has self-adjoint realizations with trace class resolvents. This spectral condition, essential in our work, turns out to be equivalent with the following simple integrability condition on the product of the principal and nonprincipal solutions near the respective endpoint (see Theorem \ref{TFAE}):  Assume that the equation $\tau f = \frac{1}{r}(-(p f')'+ qf) = \lambda f$ on $(a,b)$, is nonoscillatory at the endpoint $a$ $($resp., $b)$ for some $\lambda \in \mathbb R$, and that for some $c\in (a,b)$
\begin{align*}
    \int_a^c |u_a(x)\np_a(x)r(x)| dx < \infty \qquad \left(\text{resp., } \int_c^b |u_b(x)\np_b(x)r(x)| dx < \infty\right)
\end{align*}
where $u_a$, $\np_a$ $($resp., $u_b$, $\np_b$$)$ are any principal, resp.~nonprincipal solutions of $\tau f=\lambda f$ near the endpoint $x = a$ $($resp., $x=b)$. This integrability assumption then allows one to iteratively construct a spectral parameter power series (i.e., a Taylor series in the spectral parameter $z$) for solutions of the Sturm--Liouville problem $\tau y=zy$. This series can be equivalently viewed as a type of Born expansion, and we show that in a certain precise sense this series is well-behaved if and only if the aforementioned trace class condition is satisfied (see Remark \ref{RemarkBorn}). The regularization index at the singular endpoint $x=a$ (resp., $x=b$) is then defined by comparing the growth in $x$ of the coefficients of the power series of the principal and nonprincipal solutions as $x\to a^+$ (resp., $x\to b^-)$. See Definition~\ref{DefRI}.

The regularization index turns out to be well-behaved under Darboux transforms as seen in Theorem \ref{Thm:Darboux}. Hence, one important implication is that a finite regularization index allows one to quantify how far certain limit point endpoints are away from being Darboux transformed to a limit circle endpoint, which in turn can be regularized in the sense of Niessen and Zettl (see \cite{NZ92} and \cite[Thm. 8.3.1]{Ze05}), thus the appropriateness of the name regularization index.
In particular, our work extends the notion of regularization to include limit point nonoscillatory endpoints of Sturm--Liouville expressions with finite regularization index  that can be Darboux transformed (equivalently, transformed into Schr\"odinger form). As a corollary, we obtain Weyl eigenvalue asymptotics for this class of problems. For more information on Darboux transforms directly related to the current study, we refer to \cite{D78}, \cite{GT96} (see also \cite{KST_MN} and the extensive list of references therein) and \cite{B19}, \cite{G-UKM}, \cite{G-UMM} (in the context of exceptional orthogonal polynomials).

We also study the structure of the Weyl $m$-function under our regularization process. In particular, we obtain an alternative proof that the Weyl $m$-functions in the case of finite regularization index is in the subclass $N_\kappa^\infty$ consisting of generalized Nevanlinna--Herglotz functions with $\kappa = \floor{(\ell+1)/2}$ negative squares (where $\lfloor x \rfloor$ is the floor function and $\ell=\min\{\ell_a,\ell_b\}$), no nonreal poles, and the only generalized pole of nonpositive type at infinity. This extends some of the results for specific examples studied in the series of papers \cite{KT_JDE}--\cite{KT13}. In addition to these papers, for more information on singular Weyl $m$-functions we refer to \cite{FL10}, \cite{GZ06}, and \cite{KST_IMRN}.

\medskip

The notion of the regularization index has appeared (sometimes implicitly) in various previous works. The prime example of a Sturm--Liouville operator for which the index is particularly useful is the perturbed spherical Schr\"odinger operator (or Bessel operator). Some earlier works include the papers of Fulton \cite{F08}, Fulton-Langer \cite{FL10}, and Kurasov-Luger \cite{KL11}, where analytic perturbations where studied and the authors relied on the Frobenius method. Here the regularization index can be explicitly computed in terms of the roots of the corresponding indicial equation. Kostenko, Sakhnovich, and Teschl in a series of papers \cite{KST_Inv}--\cite{KT13} included nonanalytic perturbations and used methods more inline with the present paper. In fact, our perturbation result Theorem \ref{ThmPer} can be viewed as a natural generalization of the perturbative approach used in \cite[Lem.~2.2]{KST_Inv} for spherical Schr\"odinger operators. See Remark \ref{RemarkPer} for more details.

The first explicit definition of an index, seemingly equivalent to the one defined in the present paper, seems to originate from the work of Kaltenb\"ack and Woracek on canonical systems and Pontryagin spaces of entire functions in \cite{KW6,KW5}. This index, denoted by the greek letter $\Delta$, can be associated to certain canonical Hamiltonian systems which encompass the Sturm--Liouville operators treated here as a special case. Of particular significance is the paper of Winkler and Woracek \cite{WW14}, where an accessible method for computing the index $\Delta$ is provided and the paper of Langer and Woracek \cite{LW23}, where the special case of Sturm--Liouville operators is treated. In particular, Langer and Woracek use essentially the same recursion to define $\Delta$ as we do to define $\ell$, however subtle differences remain; see, for instance, Open Problem \ref{problemTheta} and Remark \ref{RemarkDelta}. We nonetheless believe both notions to be equivalent and fully agree with the assessment of the authors of \cite{WW14} that `limit point but finite index' is in many respects similar to the limit circle case, which becomes even more apparent through our regularization process. Interestingly, when the index is infinite, certain examples can still share properties of the limit circle case such as Weyl asymptotics- see Section \ref{MiePotential} which includes inverse quartic potentials.

We do not use the theory of canonical systems or Pontryagin spaces in our work, though we certainly believe that there are interesting connections to these areas which deserve further attention. We also avoid the notion of rigged (distributional) Hilbert spaces and supersingular perturbations (see \cite{DS2000}, \cite{K03}, \cite[App.~A]{KL11}, \cite{LN16}). Instead our proofs mainly rely on classical ODE-methods for absolutely continuous functions. This is motivated by the fact that our Definition~\ref{DefRI} of the regularization index relies exclusively on growth properties of classical solutions to $\tau y = zy$ near the endpoints, rather than on their $L^2$-integrability or membership in a rigged Hilbert space. 

\medskip     

The present paper is organized as follows:
\begin{itemize}[leftmargin=*]
    \item In Section \ref{secttracecondition} we give some background and introduce the main integrability, equivalently trace class, assumptions that will be used throughout the paper.
    \item In Section \ref{sectphi}  we construct a spectral parameter power series representation for the principal solution to the Sturm--Liouville problem and show its convergence in Proposition \ref{PropPhiEntire}.
    \item In Section \ref{secttheta} we start with the crucial Theorem \ref{TFAE} giving us equivalent characterizations of the trace class resolvent condition in terms of properties of the principal and nonprincipal solutions. We also list an array of consequences of Theorem \ref{TFAE} for the properties of the entire nonprincipal solution, in particular, Corollaries~\ref{CorThetaNorm} and \ref{CorTheta}.
    \item In Section \ref{Sect_RI} we introduce the regularization index in Definition~\ref{DefRI} and discuss some simple examples in Remark \ref{remarkexamples}. We then proceed to prove a stability result in Theorem \ref{ThmPer}, generalizing earlier work on perturbed spherical Schr\"odinger operators.
    \item In Section \ref{sectlclp} we relate the regularization index to the classic limit circle/limit point classification. This is done in Theorem \ref{Thm:N=0}. We then study the relationship between the regularization index $\ell_a$  and the index $\Delta$ used in \cite{LW23}, \cite{WW14}. This relationship is encapsulated in Open Problem \ref{problemTheta}.
    \item  In Section \ref{sectnatural} we introduce in Definition~\ref{DefThetaNorm} a useful choice of normalized system of entire solutions, which we label `naturally normalized system'. This notion plays an important role in the subsequent sections. We also give a more intuitive characterization of this normalization in Theorem \ref{Theorem2} (see in particular \eqref{gg}).
    \item In Section \ref{SectDarb} we introduce additional hypotheses to study the relationship between Darboux transforms and naturally normalized systems. The crucial result of this section is Corollary \ref{cor:cannorm} showing that the natural normalization is preserved under Darboux transforms.
    \item In Section \ref{SectSpec} we determine how the regularization index changes under a Darboux transform depending on the (non)principality property of the seed function (see Theorem \ref{Thm:Darboux}). We also illustrate how applying a series of Darboux transforms can be viewed as a regularization process, showing that Weyl asymptotics hold for problems with finite regularization indices.
    \item In Section \ref{SectWeylm} we use the results on Darboux transforms of the previous section to explicitly compute Weyl $m$-functions for problems with finite regularization indices. 
    \item In Section \ref{sectexamples} we demonstrate our results by considering a variety of examples including generalized Bessel operators, Jacobi differential operators, and Schr\"odinger operators on the half-line with power potentials. We also provide an example with an infinite regularization index for which Weyl asymptotics still holds (Mie-type potentials) and consider the Laguerre operator at $\infty$ for which our main hypothesis is not satisfied.
    \item Appendix \ref{AppendixA} contains certain technical proofs not included in the main text. 
\end{itemize}

We include a few open problems throughout the paper.

\section{A trace class integrability Condition}\label{secttracecondition}

We begin by recalling the typical integrability hypotheses that we will make throughout.

\begin{hypothesis} \label{HypoInt}
Let $(a,b) \subseteq \R$ and suppose that $p,q,r$ are $($Lebesgue\,$)$ measurable functions on $(a,b)$ 
such that the following items $(i)$--$(iii)$ hold: \\[1mm] 
$(i)$ \hspace*{1.1mm} $r>0$ a.e.~on $(a,b)$, $r\in\Ll$. \\[1mm] 
$(ii)$ \hspace*{.1mm} $p>0$ a.e.~on $(a,b)$, $1/p \in\Ll$. \\[1mm] 
$(iii)$ $q$ is real-valued a.e.~on $(a,b)$, $q\in\Ll$. 
\end{hypothesis}

Given Hypothesis \ref{HypoInt}, we study Sturm--Liouville operators associated with the general, 
three-coefficient differential expression
\begin{equation} \label{SL}
\tau = \dfrac{1}{r(x)}\Big[-\dfrac{d}{dx} p(x)  \dfrac{d}{dx} + q(x) \Big] \ \text{ for a.e.~$x\in(a,b) \subseteq \R$.}
\end{equation}
As such, the Wronskian of $f$ and $g$, for $f,g\in\ACl$, is defined by
\begin{equation*}
W(f,g)(x) = f(x)g^{[1]}(x) - f^{[1]}(x)g(x), \quad x \in (a,b),
\end{equation*}
with 
\begin{equation*}
y^{[1]}(x) = p(x) y'(x), \quad x \in (a,b),
\end{equation*}
denoting the first quasi-derivative of a function $y\in AC_{loc}((a,b))$.
In the following we will drop the \emph{a.e.} from equalities between functions in $L^1_{loc}$.

Let us now introduce maximal and minimal operators in $L^2((a,b); r(x)dx)$ associated with $\tau$ in the usual manner as follows. 

\begin{definition}
Assume Hypothesis \ref{HypoInt}. Given $\tau$ as in \eqref{SL}, the \emph{maximal operator} $T_{max}$ in $L^2((a,b); r(x)dx)$ associated with $\tau$ is defined by
\begin{align*}
&T_{max} f = \tau f,    
\\
& f \in \dom(T_{max})=\big\{g\in\Lr \, \big| \,g,pg'\in\ACl;    \\ 
& \hspace*{6.3cm}  \tau g\in\Lr\big\}.  
\end{align*}
The \emph{preminimal operator} $T_{min,0} $ in $L^2((a,b); r(x)dx)$ associated with $\tau$ is defined by 
\begin{align*}
&T_{min,0}  f = \tau f,   \no
\\
&f \in \dom (T_{min,0})=\big\{g\in L^2((a,b); r(x)dx) \, \big| \, g,pg'\in\ACl;   
\\ 
&\hspace*{3.25cm} \supp \, (g)\subset(a,b) \text{ is compact; } \tau g\in L^2((a,b); r(x)dx) \big\}.   
\end{align*}

One can prove that $T_{min,0} $ is closable, and one then defines the \emph{minimal operator} $T_{min}$ as the closure of $T_{min,0} $. We have that $(T_{min,0})^* = T_{max}$.
\end{definition}

It is known (see, e.g.,~\cite{NZ92}) that if the equation 
\begin{align}\label{EigenEq}
    \tau f = \lambda f, \qquad \lambda \in \R
\end{align}
is nonoscillatory near $a$ (resp., $b$), meaning that its solutions have finitely many zeros in a vicinity of the respective endpoint, then there exists an up to constant multiples unique solution $u_a$ (resp., $u_b$) of \eqref{EigenEq} satisfying 
\begin{align*}
\lim_{x \to a^+} \frac{u_a(x)}{\np_a(x)} = 0 \qquad \Big(\text{resp.,}\, \lim_{x \to b^-} \frac{u_b(x)}{\np_b(x)} = 0\Big)
\end{align*}
for any linearly independent solution $\np_a$ (resp., $\np_b$) of \eqref{EigenEq}. In this case $u_a$ (resp., $u_b$) is called the \emph{principal solution} of \eqref{EigenEq} at $a$ (resp., $b$), and $\np_a$ (resp., $\np_b$) is called a \emph{nonprincipal solution} of \eqref{EigenEq}. Note that the nonoscillatory condition \eqref{EigenEq} near $a$ (resp., $b$) is equivalent to the semiboundedness of one (hence any) self-adjoint realization of $\tau|_{(a,c)}$ (resp., $\tau|_{(c,b)}$).

We now come to the main spectral condition of the present paper. We say that $\tau$ satisfies the \emph{trace class property at $x = a$} (resp., at $x = b$) if and only if every self-adjoint realization $T$ of $\tau|_{(a,c)}$ (resp., $\tau|_{(c,b)}$) for some (hence any) $c \in (a,b)$ has trace class resolvent $(T-zI)^{-1}$ for some (hence any) $z$ in the resolvent set $\rho(T)$. The main goal of the present paper is to study the implications of the trace class property in the semibounded case:
\begin{hypothesis}\label{hypothesisTrace}
    Assume that $\tau$ satisfies the trace class property at $a$ $($resp., $b)$ and that self--adjoint realizations of $\tau|_{(a,c)}$ $($resp., $\tau|_{(c,b)})$ with $c \in (a,b)$ are semibounded.
\end{hypothesis}
 
As we will demonstrate, it is more practical to work instead with Hypothesis \ref{Hypothesis} stated below, as it is computationally more tractable. We will eventually prove in Theorem \ref{TFAE} that Hypothesis \ref{Hypothesis} is in fact equivalent to Hypothesis \ref{hypothesisTrace} at the respective endpoint, giving an easy criterion for the trace class property of $\tau$. 
\begin{hypothesis}\label{Hypothesis}
    Assume that the equation $\tau f = \lambda f$ is nonoscillatory at the endpoint $a$ $($resp., $b)$ for some $\lambda \in \mathbb R$, and that for some $c\in (a,b)$
    \begin{align}\label{Condition}
        \int_a^c |u_a(x)\np_a(x)r(x)| dx < \infty \qquad \left(\text{resp., } \int_c^b |u_b(x)\np_b(x)r(x)| dx < \infty\right)
    \end{align}
    where $u_a$, $\np_a$ $($resp., $u_b$, $\np_b$$)$ are any principal, resp.~nonprincipal solutions of \eqref{EigenEq} near the endpoint $x = a$ $($resp., $x=b)$.
\end{hypothesis}

It is important to clarify that as our analysis is local, we will mostly require condition \eqref{Condition} to hold only at one of the endpoints, which we conventionally take as $a$. In case of doubt, we will explicitly state that we require Hypothesis \ref{Hypothesis} at $ x= a$ (resp., at $x=b$). We now add some context for this hypothesis.

\begin{remark}\label{remarkcoefficients}
$(i)$ Notice that Hypothesis \ref{Hypothesis} holds at limit circle nonoscillatory endpoints for every $\lambda\in\R$ by definition. \\[1mm]
$(ii)$ We will show in Corollary \ref{CorHyp} that if Hypothesis \ref{Hypothesis} holds for some $\lambda\in\R$ it holds for all $\lambda\in\R$. Therefore, if $q\equiv0$ in \eqref{SL}, we can choose without loss of generality $\lambda=0$ in Hypothesis \ref{Hypothesis}, and one confirms that linearly independent solutions are given by $y_1(x)=1$ and $y_2(x)=\int_x^{x_0} \frac{dt}{p(t)}$ with $x_0\in(a,b)$. So we now consider two case distinctions:

$(a)$ Suppose $1/p\notin L^1((a,c);dx)$. Then $\lim_{x\to a^+}y_1(x)/y_2(x)=0$ so $y_1$ is principal and $y_2$ is nonprincipal.
Hence in this case Hypothesis \ref{Hypothesis} at $x=a$ is equivalent to assuming that (cf. \cite[Def. 7.1]{LW23})
\begin{equation}\label{casea}
\int_a^c \int_x^c \frac{dt}{p(t)}\, r(x) dx<\infty, \qquad \frac{1}{p}\notin L^1((a,c);dx), \qquad c\in(a,b),
\end{equation}
which in particular implies that $r\in L^1((a,c);dx)$.

$(b)$ Suppose $1/p\in L^1((a,c);dx)$. Then $y_3(x)=\int_a^x \frac{dt}{p(t)}$ exists, solves $\tau y = 0$, and satisfies $\lim_{x\to a^+}y_3(x)/y_1(x)=0$ so that $y_3$ is principal and $y_1$ is nonprincipal.
Hence in this case Hypothesis \ref{Hypothesis} is equivalent to (cf. \cite[Def. 8.1]{LW23})
\begin{equation}\label{caseb}
\int_a^c \int_a^x \frac{dt}{p(t)}\, r(x) dx<\infty,\quad  \frac{1}{p}\in L^1((a,c);dx), \qquad c\in(a,b),
\end{equation}
where $r$ might or might not be in $L^1((a,c);dx)$. Therefore, if $q\equiv 0$, then Hypothesis \ref{Hypothesis} holds at $x=a$ if and only if one of \eqref{casea} or \eqref{caseb} holds.\\[1mm]
$(iii)$ If $p\equiv r\equiv 1$ in \eqref{SL}, then Hypothesis \ref{Hypothesis} at $x=a$ is equivalent to assuming
\begin{equation*}
\int_a^c u_a^2(x)\int_x^{x_0}\frac{dt}{u_a^2(t)}dx<\infty
\end{equation*}
(cf. \cite[Def. 9.3]{LW23}).
\end{remark}

\section{Properties of the principal solution}
\label{sectphi}
The goal of the present section is to construct a fundamental solution $\varphi(z,x)$ satisfying
\begin{align}\label{tauphi}
    \tau \varphi(z,x) = z \varphi(z,x),
\end{align} 
which is principal at the endpoint $x = a$ and entire in $z \in \mathbb C$ (a similar construction can be performed at the endpoint $x = b$). No additional requirements are needed at $x=b$. The key to this construction is the following technical lemma which we will use repeatedly in the present paper. While we believe it to be known, we were unable to find this result in the literature, so we include the proof in the appendix.
\begin{lemma}\label{LongLem}
    Let $D = U \times (c,d)$, where $a < c < d < b$ and $U \subseteq \mathbb C$ is open. Denote by $T_{max}^{(c,d)}$ the maximal operator associated with $\tau|_{(c,d)}$.
    \begin{enumerate}
        \item Let $y \colon D \to \mathbb C$ be given such that $y(z, \, \cdot \,) \in \emph{dom}(T_{max}^{(c,d)})$ for all $z \in U$. Moreover, assume that $\tau y(z,x) = z y(z,x)$ for $(z,x) \in D$, with $y(z,x)$ being holomorphic in $z$. Then the mapping $z \mapsto y(z, \, \cdot \,)$ is an $L^2((c,d); r(x)dx)$-valued holomorphic mapping and $y$ has locally around $z_0 \in U$ a series expansion
    \begin{align}\label{yPowerSeries}
        y(z,x) = \sum_{n\geq 0} y_n(z_0,x)(z-z_0)^n,
    \end{align}
    where each $y_n$ is in $\emph{dom}(T_{max}^{(c,d)})$  and
    \begin{align}\label{y_n}
        (\tau -z_0) y_0 = 0, \qquad (\tau -z_0)y_{n} = y_{n-1}, \quad n \geq  1.
    \end{align}
    \item Assume that $y \colon D \to \mathbb C$ has locally the series representation \eqref{yPowerSeries} in the space $L^2((c,d); r(x) dx)$ with $y_n \in \emph{dom}(T_{max}^{(c,d)})$ satisfying \eqref{y_n} $($in particular $z \to y(z, \, \cdot \,)$ is an $L^2((c,d); r(x) dx)$-valued analytic mapping$)$. Then $y(z, \, \cdot \, )$ for $z \in U$ is in $\emph{dom}(T_{max}^{(c,d)})$  and satisfies $\tau y(z,x) = z y(z,x)$ for $(z,x) \in D$.
    \end{enumerate}
\end{lemma}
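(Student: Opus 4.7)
The plan is to exploit that on the compact sub-interval $[c',d']\Subset(c,d)$ the coefficients $p,q,r$ and $1/p$ all lie in $L^1([c',d'])$, so $\tau|_{(c,d)}$ is a regular Sturm--Liouville expression. Standard regular ODE theory then provides a fundamental system $\{\phi_1(z,\,\cdot\,),\phi_2(z,\,\cdot\,)\}$ of $\tau f = zf$, normalized at some fixed $x_0\in(c,d)$ by $\phi_1(z,x_0)=\phi_2^{[1]}(z,x_0)=1$ and $\phi_1^{[1]}(z,x_0)=\phi_2(z,x_0)=0$, such that for each $x \in (c,d)$ the map $z\mapsto(\phi_j(z,x),\phi_j^{[1]}(z,x))$ is entire, locally uniformly in $x$; in particular $z\mapsto \phi_j(z,\,\cdot\,)$ is $L^2((c,d);r\,dx)$-valued entire. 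Both parts of the lemma then reduce to statements about how a solution decomposes in this basis.

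For part $(1)$, I would write $y(z,x)=\alpha(z)\phi_1(z,x)+\beta(z)\phi_2(z,x)$ with $\alpha(z)=y(z,x_0)$ and $\beta(z)=y^{[1]}(z,x_0)$. Pointwise $z$-holomorphy of $y$ immediately gives holomorphy of $\alpha$; holomorphy of $\beta$ follows by choosing a second point $x_1\in(c,d)$ with $\phi_2(z_0,x_1)\neq 0$ and inverting, on a neighborhood of $z_0$, the $2\times 2$ linear system with entire coefficients expressing $(y(z,x_0),y(z,x_1))$ in terms of $(\alpha(z),\beta(z))$. Since the $\phi_j(z,\,\cdot\,)$ are $L^2$-valued entire, so is $z\mapsto y(z,\,\cdot\,)$. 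The Taylor coefficients
\[
y_n(z_0,x):=\frac{1}{n!}\,\partial_z^n y(z,x)\Big|_{z=z_0}
\]
then give the local series \eqref{yPowerSeries} both pointwise and in $L^2$. The recursion \eqref{y_n} is obtained by applying $\partial_z^n$ to $\tau y=zy$ via the Leibniz-type identity $\partial_z^n(zy)=z\partial_z^n y+n\partial_z^{n-1}y$ and setting $z=z_0$. Membership of $y_n$ in $\dom(T_{max}^{(c,d)})$ follows by induction: given $y_{n-1}\in L^2$, the inhomogeneous relation $(\tau-z_0)y_n=y_{n-1}$ forces $y_n^{[1]}\in AC_{loc}((c,d))$ and $\tau y_n\in L^2((c,d);r\,dx)$.

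For part $(2)$, the strategy is to identify the $L^2$-limit $y(z,\,\cdot\,)$ with the unique classical solution sharing its pointwise initial data at $x_0$. Since each $y_n\in\dom(T_{max}^{(c,d)})$, the values $y_n(z_0,x_0)$ and $y_n^{[1]}(z_0,x_0)$ are well-defined. Variation of parameters applied to $(\tau-z_0)y_n=y_{n-1}$ relative to the fundamental system at $z_0$ yields a Volterra representation
\[
y_n(z_0,x)=y_n^{\text{hom}}(x)+\int_{x_0}^x G(x,t)\,y_{n-1}(z_0,t)\,r(t)\,dt,
\]
whose kernel $G$ depends only on $\phi_1(z_0,\,\cdot\,),\phi_2(z_0,\,\cdot\,)$. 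A Gronwall iteration on any $[c',d']\Subset(c,d)$, applied to the coupled norm $\|y_n\|_{\infty,[c',d']}+\|y_n^{[1]}\|_{\infty,[c',d']}$, produces pointwise bounds of the form $CM^n/n!$ depending only on the subinterval. Consequently the two series $\alpha(z):=\sum_{n\geq 0}y_n(z_0,x_0)(z-z_0)^n$ and $\beta(z):=\sum_{n\geq 0}y_n^{[1]}(z_0,x_0)(z-z_0)^n$ converge on a neighborhood of $z_0$, and $\tilde y(z,x):=\alpha(z)\phi_1(z,x)+\beta(z)\phi_2(z,x)$ is a classical solution of $\tau\tilde y=z\tilde y$, entire in $z$. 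By part $(1)$, the Taylor coefficients $\tilde y_n$ of $\tilde y$ satisfy the same recursion and the same initial data at $x_0$ as the given $y_n$; uniqueness for the initial-value problem for $(\tau-z_0)\tilde y_n=\tilde y_{n-1}$ then forces $\tilde y_n=y_n$, hence $\tilde y=y$ as $L^2$-limits, which yields $y(z,\,\cdot\,)\in\dom(T_{max}^{(c,d)})$ and $\tau y=zy$.

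The main obstacle I anticipate is establishing the uniform growth bound $\|y_n\|_{\infty,[c',d']}+\|y_n^{[1]}\|_{\infty,[c',d']}\leq CM^n/n!$ in part $(2)$ purely from the recursion, since the $L^2$-hypothesis alone is too weak: one has to iterate the variation-of-parameters representation while keeping $y_n$ and $y_n^{[1]}$ coupled in a single Gronwall argument, and then carefully translate the resulting pointwise control into convergence of the initial-data series $\alpha(z),\beta(z)$. A smaller technical point, already in part $(1)$, is the promotion of pointwise $z$-holomorphy of $y(z,x)$ to holomorphy of the quasi-derivative $z\mapsto y^{[1]}(z,x_0)$; this is handled by the two-point linear system with entire coefficients described above.
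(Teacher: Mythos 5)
Your part $(1)$ is essentially sound and close to the paper's own argument: both start from the decomposition $y(z,x)=y(z,x_0)\phi_1(z,x)+y^{[1]}(z,x_0)\phi_2(z,x)$ in an entire fundamental system on the regular interval $(c,d)$ and first upgrade pointwise holomorphy to $L^2$-valued holomorphy. The paper then obtains \eqref{y_n} and the domain membership by writing $y_n$ as a Bochner--Cauchy integral and invoking Hille's theorem for the closed operator $T_{max}^{(c,d)}$; your direct $z$-differentiation of $\tau y=zy$, combined with the observation that each $y_n$ is continuous up to the endpoints of the regular interval and hence lies in $L^2((c,d);r\,dx)$, is an acceptable and somewhat more elementary substitute.

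Part $(2)$, however, contains a genuine gap. The bound $\|y_n\|_{\infty,[c',d']}+\|y_n^{[1]}\|_{\infty,[c',d']}\leq CM^n/n!$ cannot be obtained ``purely from the recursion,'' and is in fact false in general. The recursion $(\tau-z_0)y_n=y_{n-1}$ determines $y_n$ only up to an arbitrary element $A_n\phi_1(z_0,\cdot)+B_n\phi_2(z_0,\cdot)$ of the two-dimensional homogeneous solution space, and no Gronwall iteration on the recursion can control the free constants $A_n,B_n$. Worse, if the $L^2$-series has finite radius of convergence $R$ (the relevant case in the paper's applications), then $\limsup_n\|y_n\|_{L^2}^{1/n}=1/R>0$, while $\|y_n\|_{L^2}\leq\|y_n\|_{\infty}\big(\int_c^d r\,dx\big)^{1/2}$, so a factorially decaying sup-norm bound would force the series to be entire, a contradiction. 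What the hypotheses do yield is a geometric bound: from $\|y_n\|_{L^2}\leq C\rho^{-n}$ and the boundedness of the Volterra part one gets $|A_n|+|B_n|\leq C'\rho^{-n}$ by equivalence of norms on the span of $\phi_1(z_0,\cdot),\phi_2(z_0,\cdot)$, and this already suffices for the convergence of your series $\alpha(z)$, $\beta(z)$ and hence for the rest of your identification argument; so the strategy is repairable, but the stated estimate and the proposed mechanism for it are wrong. Note that the paper's proof of $(2)$ sidesteps all of this: it applies $T_{max}^{(c,d)}$ to the truncated sums $y^{(M)}$, computes $T_{max}^{(c,d)}y^{(M)}=zy^{(M-1)}+z_0y_M(z-z_0)^M$, and concludes by closedness of $T_{max}^{(c,d)}$, a three-line argument you may prefer to adopt.
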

\begin{proof}
See Appendix \ref{AppendixA}.
\end{proof}

We now construct the solution $\varphi(z,x)$ via the infinite power series given by
\begin{align}\label{defPhi}
    \varphi(z,x) = \sum_{n = 0}^\infty \varphi_n(x)(z-\lambda)^n,\qquad x\in(a,b),\ \lambda\in\R.
\end{align}
In fact, rewriting \eqref{tauphi} as
\begin{align*}
    (\tau - z)\varphi = \frac{1}{r}\Big(-(p\varphi')'+ \big(q - \lambda r - (z-\lambda) r\big) \varphi\Big) = 0,
\end{align*}
we see that \eqref{defPhi} is the usual Born series, where we view $(z-\lambda) \in \C$ as the coupling constant for the potential $-r$ (see \cite{GT81} and Remark \ref{RemarkBorn}).

We note that $\varphi_n(x)$ clearly depends on the choice of $\lambda$, so a more precise notation would be $\varphi_n(\lambda,x)$ (see \eqref{yPowerSeries}). However, to keep the notation short we will suppress this $\lambda$-dependence and simply write $\varphi_n(x)$ as is customary with $u$ and $\np$. It will turn out that the choice of $\lambda \in \R$ does not play any significant role (see Cor.~\ref{CorHyp}).

We remark that other spectral parameter power series have been discussed in \cite{KP10}, specifically, the numerical aspects regarding eigenvalue problems (see also the review \cite{KT13a}). An equivalent construction also appeared in \cite{LW23} in relation to the index $\Delta$ mentioned in the Introduction. 

Assuming Hypothesis \ref{Hypothesis} holds at $x=a$, we define $\varphi_0(x) = u_a(x)$, where $u_a$ is a principal solution of \eqref{EigenEq}. That is, we begin by constructing the series \eqref{defPhi} about a point $\lambda\in\R$ such that $\tau f=\lambda f$ is nonoscillatory (though this can be extended to all $\lambda\in\R$ by Corollaries  \ref{CorNorm}--\ref{CorHyp}). We then define iteratively
\begin{align}\label{phin}
    \varphi_n(x) = \int_a^x [ u_a(t)\np_a(x) - \np_a(t)u_a(x)]\varphi_{n-1}(t)r(t) dt,\qquad x\in(a,b), 
\end{align}
where $\np_a$ is a nonprincipal solution of \eqref{EigenEq} satisfying $W(\np_a, u_a) = 1$.
In the following, we fix a $c \in (a,b)$ such that $u_a$, $\np_a$ have no zeros on $(a,c]$. We remark that in this case we also have (see \cite[Thm. 2.2(iii)]{NZ92})
\begin{align}\label{NZIneq}
    |u_a(t)\np_a(x)|<|\np_a(t)u_a(x)|, \qquad a < t < x < c.
\end{align}
In Lemma \ref{LemmaRho} below we prove that the integral \eqref{phin} indeed exists.

As we assume Hypothesis \ref{Hypothesis}, we can define the function
\begin{align}\label{DefRho}
    \rho(x) = \int_a^x |\np_a(t) u_a(t) r(t) |dt,\qquad x\in (a,c).
\end{align}
Note that $\rho(x) \to 0$ for $x \to a^+$. This leads to the following:
\begin{lemma}\label{LemmaRho}
    Assume Hypothesis \ref{HypoInt} and that Hypothesis \ref{Hypothesis} holds at $x=a$. Let $c \in (a,b)$ be chosen such that $u_a, \np_a$ have no zeros on $(a,c]$. Then the following estimates hold for $x \in (a,c)$$:$
    \begin{align}\label{PhiRhoEst}
      |\varphi_n(x)| \leq \rho^n(x)|u_a(x)| , \qquad n \in \mathbb N.  
    \end{align}
\end{lemma}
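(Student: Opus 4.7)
The plan is to prove \eqref{PhiRhoEst} by induction on $n$, at the same time establishing that the integral defining $\varphi_n$ in \eqref{phin} converges absolutely. The base case $n = 0$ is immediate from $\varphi_0 = u_a$ and $\rho^0 \equiv 1$.

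For the inductive step, I would assume $|\varphi_{n-1}(t)| \leq \rho^{n-1}(t)|u_a(t)|$ for $t \in (a,c)$ and combine two ingredients. The first is a sharp kernel bound: since $\lambda \in \R$ and the coefficients $p,q,r$ are real-valued, the solutions $u_a, \np_a$ may be chosen real, and by hypothesis they have no zeros on $(a,c]$, so each has a constant sign there. Consequently, for $a < t < x < c$ the cross-products $u_a(t)\np_a(x)$ and $\np_a(t)u_a(x)$ share the sign $\mathrm{sgn}(u_a)\mathrm{sgn}(\np_a)$, and \eqref{NZIneq} then sharpens to
\begin{equation*}
|u_a(t)\np_a(x) - \np_a(t)u_a(x)| \leq |\np_a(t)u_a(x)|.
\end{equation*}

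The second ingredient is that $\rho$, defined in \eqref{DefRho}, is absolutely continuous on compact subintervals of $(a,c]$ with $\rho'(t) = |u_a(t)\np_a(t)r(t)|$ a.e., and $\rho(a^+)=0$ by Hypothesis \ref{Hypothesis}. The fundamental theorem of calculus therefore gives
\begin{equation*}
\int_a^x \rho^{n-1}(t)\,|u_a(t)\np_a(t)r(t)|\,dt = \int_a^x \rho^{n-1}(t)\rho'(t)\,dt = \frac{\rho^n(x)}{n}.
\end{equation*}
Plugging the kernel bound and the inductive hypothesis into \eqref{phin} yields
\begin{equation*}
|\varphi_n(x)| \leq |u_a(x)|\int_a^x |u_a(t)\np_a(t)r(t)|\,\rho^{n-1}(t)\,dt = \frac{\rho^n(x)|u_a(x)|}{n}\leq \rho^n(x)|u_a(x)|,
\end{equation*}
which closes the induction and simultaneously certifies that the integrand in \eqref{phin} is in $L^1((a,x); dt)$.

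The main obstacle is obtaining the sharp kernel estimate. A naive triangle inequality only produces $2|\np_a(t)u_a(x)|$, and coupling this with the $1/n$ factor from the $\rho$-integral yields $(2/n)\rho^n(x)|u_a(x)|$, which is acceptable for $n \geq 2$ but fails at $n = 1$. What rescues the bound in the form stated is the sign-parity observation above, which lets \eqref{NZIneq} directly control the absolute value of the difference of the two cross-products without any additional constant.
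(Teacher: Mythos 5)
Your proposal is correct and follows essentially the same route as the paper: induction on $n$, with the key step being the sharpened kernel bound $|u_a(t)\np_a(x) - \np_a(t)u_a(x)| \leq |\np_a(t)u_a(x)|$ deduced from \eqref{NZIneq} together with the fixed signs of $u_a,\np_a$ on $(a,c]$. The only (harmless) difference is that the paper bounds $\rho^{n-1}(t)\leq\rho^{n-1}(x)$ by monotonicity of $\rho$ and pulls it out of the integral, whereas you integrate $\rho^{n-1}\rho'$ exactly and pick up an extra factor of $1/n$, a slightly sharper estimate than \eqref{PhiRhoEst} requires.
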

\begin{proof}
    We proceed by induction. The estimate is trivial for $n = 0$ and let us assume it holds for up to $n-1$. Observe that as by assumption $u_a(x)$, $\np_a(x)$ do not have zeros for $x \in (a,c)$, it follow from \eqref{NZIneq} that
    \begin{align}\label{uuIneq}
        |u_a(t)\np_a(x) - \np_a(t)u_a(x)| < |\np_a(t)u_a(x)|, \qquad a < t < x < c.
    \end{align}
    Using now the monotonicity of $\rho$ together with the induction hypothesis we obtain
    \begin{align*}
        \Big| \int_a^x [ u_a(t)\np_a(x) - \np_a(t)u_a(x)]\varphi_{n-1}(t)r(t) dt& \Big| <  \int_a^x |\np_a(t)u_a(x)\varphi_{n-1}(t)r(t)| dt
        \\
        &< \int_a^x |\np_a(t)u_a(t)r(t)| dt \, \rho^{n-1}(x) |u_a(x)|
        \\
        &= \rho^n(x)|u_a(x)|,
    \end{align*}
    which finishes the proof.
\end{proof}
We now want to show that the series \eqref{defPhi} is indeed entire in $z$. From the previous lemma it follows that \eqref{defPhi} is convergent on $D_{\rho, c} = \lbrace (z,x) : x \in (a,c), \, |z| < \rho^{-1}(x) \rbrace$. Moreover, by definition  $(\tau-\lambda)\varphi_0 = 0$ and a direct calculation shows that
\begin{align*}
    (\tau-\lambda)\varphi_n = \varphi_{n-1}, \qquad n > 0.
\end{align*}
In particular for $(z,x) \in D_{\rho, c}$ it follows by Lemma~\ref{LongLem} $(ii)$ that
\begin{align*}
    \tau \varphi(z,x)  = z\varphi(z,x).
\end{align*}

To show that \eqref{defPhi} converges not only in $D_{\rho, c}$ but in fact defines an entire function in $z$ for all $x \in (a,b)$, let us choose an $x_\varepsilon = a + \varepsilon$, with $\varepsilon > 0$ small enough such that $x_\varepsilon \in (a,c)$. Consider the entire system of solutions $s_\varepsilon(z, x)$ and $c_\varepsilon(z,x)$ of $\tau f = z f$ satisfying 
\begin{equation*}
s_\varepsilon(z,x_\varepsilon) = 0=c_\varepsilon^{[1]}(z,x_\varepsilon), \qquad  s_\varepsilon^{[1]}(z,x_\varepsilon) = 1=c_\varepsilon(z,x_\varepsilon).
\end{equation*}
Let us now define
\begin{align*}
    \overset{\circ}{\varphi}(z,x) = \varphi(z,x_\varepsilon) c_\varepsilon(z,x) +  \varphi^{[1]}(z,x_\varepsilon) s_\varepsilon(z,x), \qquad x \in (x_\varepsilon, b), \quad |z| < \rho^{-1}(x_\varepsilon).
\end{align*}
Note that $\overset{\circ}{\varphi}$ is holomorphic in its first argument and satisfies $\tau \overset{\circ}{\varphi}(z,x) = z\overset{\circ}{\varphi}(z,x)$. By a standard uniqueness results for differential equations we must have $\overset{\circ}{\varphi}(z,x) = \varphi(z,x)$ for $(z,x) \in D_{\rho,c}\cap \dom(\overset{\circ}{\varphi})$. Hence for any fixed $x_0 \in (x_\varepsilon, c)$ it follows that $\varphi(z,x_0)$ can be analytically continued to a holomorphic function in the disc of radius $\rho^{-1}(x_\varepsilon)$ around $\lambda$. Letting $\varepsilon \to 0$ and thus $\rho^{-1}(x_\varepsilon) \to \infty$, we see that $\varphi(z,x)$ is indeed entire in $z$ for all $x\in (a,c)$. To extend this result to $x \in (a,b)$, observe that we can write using Lemma~\ref{LongLem} $(i)$,
\begin{align*}
    \overset{\circ}{\varphi}(z,x) = \sum_{n \geq 0} \overset{\circ}{\varphi}_n(x)(z-\lambda)^n,
\end{align*}
with $\overset{\circ}{\varphi}_n|_{(a,c)} = \varphi_n|_{(a,c)}$ and $(\tau-\lambda) \overset{\circ}{\varphi}_0 = 0$, $(\tau-\lambda) \overset{\circ}{\varphi}_{n} = \overset{\circ}{\varphi}_{n-1}$ for $n > 0$. Again from the uniqueness of solutions to differential equations, we can iteratively conclude that $\varphi_n(x) = \overset{\circ}{\varphi}_n(x)$ for $x \in (a,b)$. We have thus shown the following:
\begin{proposition}\label{PropPhiEntire}
    The infinite series \eqref{defPhi} converges for all $x\in(a,b)$, $z \in \C$, and defines a function $\varphi(z,x)$ which is entire in $z$ and satisfies
    \begin{align*}
        \tau \varphi(z,x) = z \varphi(z,x).
    \end{align*}
\end{proposition}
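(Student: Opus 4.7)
The plan is to leverage the estimate $|\varphi_n(x)|\le \rho^n(x)|u_a(x)|$ from Lemma~\ref{LemmaRho} to first get convergence in a region that degenerates near the endpoint $a$, then bootstrap to full entirety via an analytic continuation argument built on an entire fundamental system.

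First I would set $D_{\rho,c}=\{(z,x):x\in(a,c),\ |z-\lambda|<\rho^{-1}(x)\}$ and observe that on this set the series \eqref{defPhi} converges absolutely and locally uniformly by Lemma~\ref{LemmaRho}, since $\rho(x)\to 0$ as $x\to a^+$ makes the radius $\rho^{-1}(x)$ arbitrarily large close to $a$. By construction $(\tau-\lambda)\varphi_0=0$ and the choice \eqref{phin} of $\varphi_n$ together with the variation-of-constants formula (using $W(\np_a,u_a)=1$) yields $(\tau-\lambda)\varphi_n=\varphi_{n-1}$ for $n\ge 1$. Since the $\varphi_n$ clearly lie in $\dom(T_{max}^{(c',d')})$ for $[c',d']\subset (a,c)$, part $(ii)$ of Lemma~\ref{LongLem} applied on such subintervals gives $\tau\varphi(z,\cdot)=z\varphi(z,\cdot)$ on $D_{\rho,c}$.

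To promote this to an entire dependence on $z$ and to extend $x$ to all of $(a,b)$, I would fix $\varepsilon>0$ small with $x_\varepsilon=a+\varepsilon\in(a,c)$ and consider the entire system $s_\varepsilon(z,x),c_\varepsilon(z,x)$ of solutions of $\tau y=zy$ with the standard initial data at $x_\varepsilon$. Define
\begin{align*}
\overset{\circ}{\varphi}(z,x)=\varphi(z,x_\varepsilon)\,c_\varepsilon(z,x)+\varphi^{[1]}(z,x_\varepsilon)\,s_\varepsilon(z,x),\qquad x\in(a,b),
\end{align*}
which is defined for $|z-\lambda|<\rho^{-1}(x_\varepsilon)$, entire-in-$z$ in that disc for each fixed $x$, and solves $\tau\overset{\circ}{\varphi}=z\overset{\circ}{\varphi}$. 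By uniqueness of solutions to the Cauchy problem, $\overset{\circ}{\varphi}=\varphi$ wherever both are defined. Letting $\varepsilon\to 0^+$ (so that $\rho^{-1}(x_\varepsilon)\to\infty$) produces an analytic continuation of $z\mapsto\varphi(z,x_0)$ to all of $\mathbb{C}$ for every fixed $x_0\in(a,c)$.

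Finally, to show that the series representation itself is valid for all $x\in(a,b)$ (not merely $x\in(a,c)$), I would apply Lemma~\ref{LongLem}$(i)$ to the continued solution $\overset{\circ}{\varphi}(z,\cdot)$ on any compactly contained subinterval of $(a,b)$: this yields a local power series $\overset{\circ}{\varphi}(z,x)=\sum_{n\ge 0}\overset{\circ}{\varphi}_n(x)(z-\lambda)^n$ with $(\tau-\lambda)\overset{\circ}{\varphi}_0=0$ and $(\tau-\lambda)\overset{\circ}{\varphi}_n=\overset{\circ}{\varphi}_{n-1}$. Comparing with \eqref{phin} on the overlap $(a,c)$ and using uniqueness of solutions to the corresponding inhomogeneous linear second-order ODEs (with matching initial data at some interior point), one concludes inductively that $\overset{\circ}{\varphi}_n=\varphi_n$ on all of $(a,b)$. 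Hence \eqref{defPhi} converges for every $x\in(a,b)$ and every $z\in\mathbb{C}$, and the resulting $\varphi(z,x)$ is entire in $z$ and satisfies $\tau\varphi=z\varphi$. The main technical subtlety, and where I would be most careful, is verifying that the uniqueness/continuation step does not smuggle in a dependence on $\varepsilon$ or on the auxiliary subinterval, so that the extension is canonical and the coefficients $\overset{\circ}{\varphi}_n$ really do agree with the originally defined $\varphi_n$ on all of $(a,b)$.
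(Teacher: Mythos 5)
Your proposal is correct and follows essentially the same route as the paper: convergence on $D_{\rho,c}$ via Lemma~\ref{LemmaRho}, the eigenvalue equation via Lemma~\ref{LongLem}$(ii)$, entirety by matching against the entire system $s_\varepsilon, c_\varepsilon$ at $x_\varepsilon$ and letting $\varepsilon \to 0^+$, and the extension of the coefficient identities to all of $(a,b)$ via Lemma~\ref{LongLem}$(i)$ and ODE uniqueness. The only (harmless) cosmetic difference is that you correctly center the discs at $\lambda$, writing $|z-\lambda|<\rho^{-1}(x)$ where the paper writes $|z|<\rho^{-1}(x)$.
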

It should be noted that while the series \eqref{defPhi} converges for all $x\in(a,b)$, the estimate in \eqref{PhiRhoEst} will not hold in general for $x \in (c,b)$. 

We now note a few immediate corollaries from the construction of $\varphi$.
\begin{corollary}\label{CorNorm}
Assume Hypothesis \ref{HypoInt} and that Hypothesis \ref{Hypothesis} holds at $x=a$. Then $\varphi$ defined by \eqref{defPhi}, \eqref{phin} satisfies
\begin{align}\label{phiNorm}
    \lim_{x \to a^+} \frac{\varphi(z_1,x)}{\varphi(z_2,x)} = 1, \qquad z_1, z_2 \in \mathbb C.
\end{align}
In particular, $\tau f = \lambda f$ is nonoscillatory for all $\lambda \in \mathbb R$. 
\end{corollary}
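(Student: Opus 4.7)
The plan is first to prove the stronger normalization statement
$$\lim_{x\to a^+} \frac{\varphi(z,x)}{u_a(x)} = 1 \quad \text{for every } z \in \mathbb C,$$
from which \eqref{phiNorm} is immediate via the factorization $\varphi(z_1,x)/\varphi(z_2,x) = [\varphi(z_1,x)/u_a(x)]\cdot[u_a(x)/\varphi(z_2,x)]$ combined with two applications of the above limit.

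To establish the normalization I would fix $c \in (a,b)$ so small that both $u_a$ and $\np_a$ are zero-free on $(a,c]$, so that Lemma \ref{LemmaRho} applies and the function $\rho(x)$ from \eqref{DefRho} is finite on $(a,c)$ with $\rho(x)\to 0$ as $x\to a^+$ (the latter being precisely the content of Hypothesis \ref{Hypothesis} at $a$). Since $\varphi_0=u_a$, dividing \eqref{defPhi} by $u_a(x)$ for $x\in(a,c)$ gives
$$\frac{\varphi(z,x)}{u_a(x)} - 1 = \sum_{n\geq 1}\frac{\varphi_n(x)}{u_a(x)}(z-\lambda)^n,$$
and the uniform bound $|\varphi_n(x)/u_a(x)|\leq \rho(x)^n$ from \eqref{PhiRhoEst} then yields, whenever $\rho(x)|z-\lambda|<1$, the geometric-series estimate
$$\Big|\frac{\varphi(z,x)}{u_a(x)} - 1\Big| \leq \sum_{n\geq 1}\rho(x)^n|z-\lambda|^n = \frac{\rho(x)|z-\lambda|}{1-\rho(x)|z-\lambda|}.$$
Fixing $z$ and letting $x\to a^+$, the right-hand side tends to $0$, which is the desired normalization.

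For the "in particular" assertion, I would pick any $\lambda'\in\mathbb R$. By Proposition \ref{PropPhiEntire} the function $\varphi(\lambda',\cdot)$ is a real solution of $\tau f=\lambda' f$, and by the normalization just proved, $\varphi(\lambda',x)/u_a(x)\to 1$ as $x\to a^+$. Consequently $\varphi(\lambda',\cdot)$ has constant sign on some neighborhood $(a,c')$ of $a$ and hence only finitely many zeros accumulating at $a$, so $\tau f=\lambda' f$ is nonoscillatory at $a$.

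I do not anticipate a genuine obstacle: the whole argument is a direct geometric-series application of the pointwise bound already built into Lemma \ref{LemmaRho}, together with Hypothesis \ref{Hypothesis} to get $\rho(x)\to 0$. The real value of this corollary is not in its proof but in its content, namely that the normalization of $\varphi(z,\cdot)$ at $a$ is independent of $z$; this is what will later enable the "naturally normalized system" construction of Section \ref{sectnatural} and is also the key mechanism behind the extension of the nonoscillatory property from a single $\lambda$ to all of $\mathbb{R}$.
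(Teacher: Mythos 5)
Your proposal is correct and follows exactly the route the paper intends: the corollary is stated as an immediate consequence of the construction, with the geometric-series estimate $|\varphi(z,x)/u_a(x)-1|\leq \rho(x)|z-\lambda|/(1-\rho(x)|z-\lambda|)$ from Lemma \ref{LemmaRho} and $\rho(x)\to 0$ doing all the work. The only detail left tacit in your nonoscillation argument is that finitely many zeros of the single solution $\varphi(\lambda',\cdot)$ near $a$ forces the same for all solutions (Sturm separation), but this is standard and does not affect correctness.
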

Note that this immediately implies that \eqref{defPhi} can be interpreted as a perturbative Born series, in the sense that higher-order corrections $\varphi_n$, $n \geq 1$ become negligible in the limit $x \to a^+$. This should be contrasted with the case of discrete spectrum but non-trace class resolvents (see Theorem \ref{TFAE} and the subsequent Remark \ref{RemarkBorn}).

The converse of the above corollary will be stated in Corollary \ref{CorConv}. From the previous corollary, we also conclude the following.
\begin{corollary}\label{CorPrinc}
Assume Hypothesis \ref{HypoInt} and that Hypothesis \ref{Hypothesis} holds at $x=a$. Then $\varphi(z, x)$ is principal at $a$ for all $z \in \mathbb R$.
\end{corollary}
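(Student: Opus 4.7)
The plan is to deduce principality of $\varphi(z,\cdot)$ directly from Corollary \ref{CorNorm}, which has already done essentially all the work. Taking $z_2=\lambda$ in that corollary and recalling that $\varphi(\lambda,x)=\varphi_0(x)=u_a(x)$, we have
\[
\lim_{x\to a^+}\frac{\varphi(z,x)}{u_a(x)}=1\qquad \text{for every } z\in\C,
\]
and the same corollary tells us that $\tau f=zf$ is nonoscillatory at $a$ for every $z\in\R$. Hence, for real $z$, a principal solution $u_a^{(z)}$ of $\tau f=zf$ exists at $a$, and the task is to identify $\varphi(z,\cdot)$ with a nonzero scalar multiple of $u_a^{(z)}$.

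The key tool I would bring in is the classical integral characterization of principal solutions (Leighton/Hartman): in the nonoscillatory setting, a solution $u>0$ of $\tau f=zf$ near $a$ is principal at $a$ if and only if
\[
\int_a^c\frac{dt}{p(t)u(t)^2}=+\infty.
\]
Applied to $u_a$, which is principal for $\tau f=\lambda f$, this gives $\int_a^c dt/(p(t)u_a(t)^2)=+\infty$. I would then use the asymptotic $\varphi(z,x)/u_a(x)\to 1$ to choose $c_0\in(a,c)$ so that both $u_a$ and $\varphi(z,\cdot)$ are positive on $(a,c_0)$ (possible because $u_a$ is sign-definite near $a$ and $\varphi(z,\cdot)$ is asymptotic to it). On this interval,
\[
\frac{1/(p(t)\varphi(z,t)^2)}{1/(p(t)u_a(t)^2)}=\Big(\frac{u_a(t)}{\varphi(z,t)}\Big)^2\xrightarrow[t\to a^+]{}1,
\]
and the limit comparison test for positive integrands yields $\int_a^{c_0} dt/(p(t)\varphi(z,t)^2)=+\infty$. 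Applying the integral criterion in the reverse direction, this time to the nonoscillatory equation $\tau f=zf$, gives that $\varphi(z,\cdot)$ is principal at $a$.

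The only (mild) obstacle is invoking the correct classical equivalence between principality and divergence of $\int_a^{\,\cdot}dt/(pu^2)$ and checking that the limit comparison test applies, which requires the elementary sign argument above; modulo that, the statement is an immediate consequence of the normalization already established in Corollary \ref{CorNorm}.
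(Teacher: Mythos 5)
Your proposal is correct and follows essentially the same route as the paper: the paper's proof also invokes the characterization that a solution $f$ of $\tau f = zf$ is principal at $a$ if and only if $(pf^2)^{-1}$ fails to be integrable near $a$ (citing \cite[Thm.~2.2(ii)]{NZ92}), and then transfers non-integrability from $\varphi(\lambda,\cdot)=u_a$ to $\varphi(z,\cdot)$ via the normalization \eqref{phiNorm}, exactly as in your limit-comparison argument.
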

\begin{proof}
We know that in the nonoscillatory case at $a$, a solution $f$ of $\tau f = z f$ is principal at $a$ if and only if the function $(pf^2)^{-1}$ is not integrable near the endpoint $x = a$ (see e.g.~\cite[Thm. 2.2(ii)]{NZ92}). Due to \eqref{phiNorm}, $(p\varphi^2(\lambda, \, \cdot \,))^{-1}$ is not integrable near $x = a$ if and only if $(p\varphi^2(z, \, \cdot \,))^{-1}$ is not integrable for any $z \in \mathbb R$.
\end{proof}

Note that as $\varphi(z,x)$ is principal at $x = a$ and $\tau f = zf$ is nonoscillatory at $a$ for all $z\in \R$, we can obtain a nonprincipal solution $\np_a(z, \, \cdot \,)$ of $\tau f = z f$ via the formula $\np_a(z, x) = \varphi(z, x) \int_x^c \frac{dt}{p(t) \varphi^2(z,t)}$, where $c$ is chosen such that $\varphi(z, \, \cdot \, )$ does not vanish on $(a, c]$. In particular, the asymptotic behavior of nonprincipal solutions for $x \to a^+$ is already dictated by the corresponding behavior of the principal solution $\varphi(z,x)$ (cf.~the proof of Lem.~\ref{ThetaLemma}). Thus, \eqref{phiNorm} also implies the independence of Hypothesis \ref{Hypothesis} from the generalized eigenvalue $\lambda \in \R$.
\begin{corollary}\label{CorHyp}
    The Hypothesis \ref{Hypothesis} is independent of $\lambda \in \R$, that is, if it holds for one $\lambda \in \R$  it will hold for all $\lambda \in \R$.
\end{corollary}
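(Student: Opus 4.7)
The plan is to leverage the entire principal solution $\varphi(z,x)$ constructed in Section \ref{sectphi} together with the normalization \eqref{phiNorm} of Corollary \ref{CorNorm}, passing from one value of the spectral parameter to another via reduction-of-order. Suppose Hypothesis \ref{Hypothesis} holds at $x=a$ for some $\lambda_0 \in \R$, and let $\lambda_1 \in \R$ be arbitrary. By Corollary \ref{CorNorm}, $\tau f = \lambda f$ is nonoscillatory at $a$ for all $\lambda \in \R$, so in particular at $\lambda_1$. Set $u_i(x) := \varphi(\lambda_i, x)$ for $i = 0, 1$; by Corollary \ref{CorPrinc} each $u_i$ is principal at $a$, and by \eqref{phiNorm} we have $\lim_{x \to a^+} u_1(x)/u_0(x) = 1$. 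Choose $c \in (a,b)$ so that neither $u_0$ nor $u_1$ vanishes on $(a,c]$.

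Next I would manufacture nonprincipal partners via the standard reduction-of-order formula
\begin{equation*}
    \np_i(x) = u_i(x) \int_x^c \frac{dt}{p(t)\, u_i^2(t)}, \qquad i = 0, 1,
\end{equation*}
which satisfy $W(\np_i, u_i) = 1$ and are nonprincipal since the integrals diverge as $x \to a^+$ (by \cite[Thm. 2.2(ii)]{NZ92} and the principality of $u_i$). Since the integrability condition \eqref{Condition} is invariant under replacing $\np_a$ by $\np_a + \text{const} \cdot u_a$ (the extra piece contributes $|u_a^2 r|$, which is dominated by $|u_a \np_a r|$ near $a$ because $u_a/\np_a \to 0$), it suffices to establish \eqref{Condition} for this particular choice of nonprincipal solution at $\lambda_1$, assuming it for the analogous choice at $\lambda_0$.

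The key analytic step is to show that $\np_1(x)/\np_0(x) \to 1$ as $x \to a^+$. Given $\eps > 0$, pick $c_\eps \in (a, c]$ such that $(1-\eps)\, u_0^2(t) \leq u_1^2(t) \leq (1+\eps)\, u_0^2(t)$ on $(a, c_\eps)$. Splitting
\begin{equation*}
    \int_x^c \frac{dt}{p(t)\, u_1^2(t)} = \int_x^{c_\eps} \frac{dt}{p(t)\, u_1^2(t)} + \int_{c_\eps}^{c} \frac{dt}{p(t)\, u_1^2(t)},
\end{equation*}
the first integral is bracketed by $(1\pm\eps)^{-1}\int_x^{c_\eps} \frac{dt}{p\, u_0^2}$ while the second is a finite constant; since the principal parts diverge as $x \to a^+$, the ratio of the full integrals lies in $[(1-\eps)/(1+\eps), (1+\eps)/(1-\eps)]$ in the limit. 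As $\eps$ was arbitrary, dividing by $u_1/u_0 \to 1$ yields $\np_1/\np_0 \to 1$, and hence
\begin{equation*}
    \lim_{x \to a^+} \frac{u_1(x)\np_1(x)}{u_0(x)\np_0(x)} = 1.
\end{equation*}

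Finally I would conclude: pick $c' \in (a, c]$ with $|u_1 \np_1| \leq 2|u_0 \np_0|$ on $(a, c')$. Then
\begin{equation*}
    \int_a^{c'} |u_1(x)\np_1(x)\, r(x)|\, dx \leq 2 \int_a^{c'} |u_0(x)\np_0(x)\, r(x)|\, dx < \infty
\end{equation*}
by hypothesis, and the integral over the compact piece $[c', c]$ is finite since the integrand is continuous there. Thus Hypothesis \ref{Hypothesis} holds at $\lambda_1$. The only mildly delicate point is the ratio-of-divergent-integrals argument in the middle paragraph, but the uniform comparison on a one-sided neighborhood of $a$ makes it routine.
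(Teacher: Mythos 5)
Your proposal is correct and is essentially the paper's own argument: the paper deduces the corollary from the fact that $\varphi(z_1,x)/\varphi(z_2,x)\to 1$ (Corollary \ref{CorNorm}) together with the reduction-of-order representation $\np_a(z,x)=\varphi(z,x)\int_x^c \frac{dt}{p(t)\varphi^2(z,t)}$, which forces the nonprincipal solutions at different $\lambda$ to be asymptotically equivalent as well. You have simply filled in the ratio-of-divergent-integrals estimate and the invariance of \eqref{Condition} under the admissible changes of $u_a$, $\np_a$, both of which the paper leaves implicit.
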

As previously pointed out, Corollaries  \ref{CorNorm}--\ref{CorHyp} now imply that the choice $\lambda \in \R$ does not play any significant role in the iterative construction of $\varphi$. We also obtain the following from \cite[Lem.~3.2]{GZ06}.
\begin{corollary}\label{CorGZ}
Assume Hypothesis \ref{HypoInt} and that Hypothesis \ref{Hypothesis} holds at $x=a$. Then all self-adjoint realizations of the restriction $\tau|_{(a,c)}$ to an interval $(a,c)$ with $c \in (a,b)$ have a purely discrete spectrum.
\end{corollary}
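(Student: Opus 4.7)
The plan is to invoke \cite[Lem.~3.2]{GZ06} applied to the restricted expression $\tau|_{(a,c)}$. Under Hypothesis \ref{HypoInt}, every interior point $c \in (a,b)$ is a regular endpoint of $\tau|_{(a,c)}$, so only the (possibly) singular endpoint $a$ requires attention. The cited lemma asks essentially for the existence of an entire-in-$z$ solution of $\tau f = z f$ that is square-integrable near the singular endpoint with bounds uniform on compact $z$-sets; from this one constructs a Hilbert--Schmidt Green operator and concludes that every self-adjoint realization has compact resolvent, hence purely discrete spectrum.

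Such a solution has already been produced: Proposition \ref{PropPhiEntire} yields $\varphi(z,x)$ entire in $z$ and solving $\tau\varphi(z,\cdot) = z\varphi(z,\cdot)$, and Corollary \ref{CorPrinc} shows $\varphi(z,\cdot)$ is principal at $a$ for every $z \in \mathbb{R}$. To verify the integrability input, I would first note that because $u_a$ is principal, $|u_a(x)| \leq |\np_a(x)|$ for $x$ sufficiently close to $a$, so $u_a^2(x)r(x) \leq |u_a(x)\np_a(x)|r(x)$ there, and Hypothesis \ref{Hypothesis} forces $u_a \in L^2((a,c');r\,dx)$ for a suitable $c' \in (a,c]$. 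Summing the estimates of Lemma \ref{LemmaRho} then gives
\begin{equation*}
|\varphi(z,x)| \leq |u_a(x)|\, e^{|z-\lambda|\rho(x)}, \qquad x \in (a,c'),
\end{equation*}
and since $\rho$ is bounded (in fact vanishing at $a$) on $(a,c')$, this furnishes the required locally-uniform-in-$z$ bound $\|\varphi(z,\cdot)\|_{L^2((a,c');r\,dx)} \leq C_K$ for $z$ in any compact $K \subset \mathbb{C}$.

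With these inputs, \cite[Lem.~3.2]{GZ06} applies, showing every self-adjoint realization of $\tau|_{(a,c)}$ has compact resolvent and thus purely discrete spectrum. The main obstacle is really one of bookkeeping: translating between the formulation of the cited lemma (which is typically stated for a fundamental system of entire solutions at a singular endpoint) and the present one-sided situation with a regular endpoint at $c$. Once one pairs $\varphi(z,\cdot)$ with any second entire solution constructed from regular initial data at $c$ (available because $c$ is regular), the Wronskian is entire and nonvanishing, and the Green's function has the standard bilinear form whose Hilbert--Schmidt norm is controlled by the $L^2$-bound on $\varphi$ displayed above. No further analytic work is then required.
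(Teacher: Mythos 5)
Your approach is essentially the paper's: Corollary \ref{CorGZ} is obtained there simply by citing \cite[Lem.~3.2]{GZ06}, with the required input (an entire-in-$z$ solution that is $L^2$ near $a$, locally uniformly in $z$) supplied by the construction of $\varphi$ in Section \ref{sectphi}; you have merely spelled out that verification. One small correction: the displayed bound $|\varphi(z,x)| \leq |u_a(x)|\,\E^{|z-\lambda|\rho(x)}$ does not follow from Lemma \ref{LemmaRho}, since the estimate there is $|\varphi_n(x)| \leq \rho^n(x)|u_a(x)|$ \emph{without} a $1/n!$ factor; summing gives the geometric bound $|\varphi(z,x)| \leq |u_a(x)|\big(1-\rho(x)|z-\lambda|\big)^{-1}$, valid only where $\rho(x)|z-\lambda| < 1$. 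This is harmless for your purpose: given a compact $K \subset \C$, choose $c''$ so that $\rho(x)\sup_{z\in K}|z-\lambda| \leq 1/2$ on $(a,c'')$, obtaining $|\varphi(z,x)| \leq 2|u_a(x)|$ there, and use joint continuity of $\varphi$ on the regular part $[c'',c']$; the locally uniform $L^2$ bound, and hence the application of \cite[Lem.~3.2]{GZ06}, goes through.
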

\begin{remark}\label{RemarkLaguerre}
    The inverse of Corollary \ref{CorGZ} does not hold. A simple counterexample is given by the Laguerre differential expression, for which all self--adjoint realizations have purely discrete spectrum, but the principal solution at $\infty$ has asymptotically different behavior for different generalized eigenvalues (see \eqref{LaguAsym}), hence Hypothesis \ref{Hypothesis} does not hold. The details can be found in Section \ref{subLag}.
\end{remark}

We next turn to the properties of a second linearly independent fundamental solution $\theta(z,x)$. 

\section{Properties of the nonprincipal solution}
\label{secttheta}
By Corollary \ref{CorGZ} and Remark \ref{RemarkLaguerre} we know that the following hypothesis is weaker than Hypothesis \ref{Hypothesis}.
\begin{hypothesis}\label{HypoGZ}
    Assume that all self--adjoint realizations of the restriction $\tau|_{(a,c)}$ to an interval $(a,c)$ with $c \in (a,b)$ and the Dirichlet boundary condition at $c$ have a purely discrete spectrum. 
\end{hypothesis}
As shown in \cite{GZ06}, Hypothesis \ref{HypoGZ} is equivalent to the existence of an entire fundamental system of solutions $\widetilde \varphi(z,x)$, $\widetilde \theta(z,x)$ of $\tau f = zf$, real on the real axis, such that $\widetilde \varphi(z,x)$ is principal for all $z \in \mathbb R$ and $W(\widetilde \theta(z, \, \cdot \,), \widetilde \varphi(z, \, \cdot \,)) = 1$. The tilde indicates that our standard Hypothesis \ref{Hypothesis} is not assumed, and no additional normalization conditions on $\widetilde \varphi$ and $\widetilde \theta$ are imposed. We now state the following:
\begin{theorem}\label{TFAE}
    Assume Hypotheses \ref{HypoInt}, \ref{HypoGZ} and let $\widetilde \varphi$, $\widetilde \theta$ be chosen as above. Then the following are equivalent:
    \begin{enumerate}
        \item Hypothesis \ref{Hypothesis} at $x=a$ for some $\lambda \in \R;$
        \\
        \item $\displaystyle\lim_{x \to a^+} \cfrac{\widetilde \varphi(z_1,x)}{\widetilde \varphi(z_2,x)} \in \mathbb R \setminus \lbrace 0 \rbrace$ \ for all $z_1, z_2 \in \mathbb R;$
        \\
        \item $\displaystyle\lim_{x \to a^+} W(\widetilde \theta(z_2, x), \widetilde \varphi(z_1, x)) \in \mathbb R \setminus \lbrace 0 \rbrace$ \ for all $z_1, z_2 \in \mathbb R;$
        \\
        \item $\displaystyle\lim_{x \to a^+} \cfrac{\widetilde \theta(z_1,x)}{\widetilde \theta(z_2,x)} \in \mathbb R \setminus \lbrace 0 \rbrace$ \ for all $z_1, z_2 \in \mathbb R;$
        \\
        \item $\int_a^x |\widetilde \theta(z_1,t) \widetilde \varphi(z_2,t) r(t)| dt < \infty$ \ for all $z_1, z_2 \in \mathbb R$ and $x \in (a,b);$
        \\
        \item Hypothesis \ref{hypothesisTrace} at $x = a$.
    \end{enumerate}
    Moreover, in conditions $(ii)$--$(iv)$ `\emph{for all $z_1, z_2 \in \R$}' can be replaced by `\emph{for some distinct $z_1, z_2 \in \R$}'.   
\end{theorem}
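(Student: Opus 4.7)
\emph{Proof plan.} My plan is to close the logical loop among (i)--(v) and to handle $(v) \Leftrightarrow (vi)$ separately. I will establish the cycle
\[
(i) \Rightarrow (v) \Rightarrow (iii) \Rightarrow (ii) \Rightarrow (iv) \Rightarrow (v),
\]
together with the trivial reduction $(v) \Rightarrow (i)$ by setting $z_1 = z_2 = \lambda$. The ``moreover'' clause is then automatic: since $(i)$ is $z$-free, any one of $(ii)$--$(iv)$ holding for a single pair of distinct $z_1\neq z_2$ forces $(i)$, whence all of $(ii)$--$(v)$ hold for every $z_1,z_2 \in \mathbb{R}$. Throughout I rely on three tools: the entire power series $\varphi(z,x) = \sum_n \varphi_n(x)(z-\lambda)^n$ from Proposition~\ref{PropPhiEntire} together with Lemma~\ref{LemmaRho}; the reduction-of-order representation of entire nonprincipal solutions; and the Wronskian derivative identity $\frac{d}{dx}W(f,g) = (w - z)\,r\,f\,g$ whenever $\tau f = wf$ and $\tau g = zg$.

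For $(i) \Rightarrow (v)$, I would first use that any entire principal solution must equal $\varphi(z,\,\cdot\,)$ up to a real-entire, nowhere-vanishing factor $f(z)$, so $\widetilde\varphi(z,x) = f(z)\varphi(z,x)$. Lemma~\ref{LemmaRho} then yields $|\widetilde\varphi(z,x)| \leq C(z)|u_a(x)|$ uniformly for $x$ near $a$. For an entire nonprincipal partner, I would use the reduction of order
\[
\widetilde\theta(z,x) \;=\; \widetilde\varphi(z,x)\int_x^{c_0}\!\frac{dt}{p(t)\widetilde\varphi(z,t)^2} \;+\; g(z)\widetilde\varphi(z,x),
\]
combined with the asymptotic $\int_x^{c_0}(p u_a^2)^{-1}dt \sim \np_a(x)/u_a(x)$ as $x\to a^+$ (coming from the construction of $\np_a$ from $u_a$), to obtain $|\widetilde\theta(z,x)| \leq C'(z)|\np_a(x)|$ near $a$. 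Multiplying gives $|\widetilde\theta(z_1,t)\widetilde\varphi(z_2,t)r(t)| \leq K(z_1,z_2)|u_a(t)\np_a(t)r(t)|$, which is integrable by Hypothesis~\ref{Hypothesis}. Conversely, $(v) \Rightarrow (i)$ is the specialization $z_1=z_2=\lambda$, using that $\widetilde\varphi(\lambda,\,\cdot\,), \widetilde\theta(\lambda,\,\cdot\,)$ are (nonzero) scalar multiples of $u_a,\np_a$.

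For $(v) \Rightarrow (iii)$, integrating the Wronskian identity from $x$ up to $c$ gives
\[
W(\widetilde\theta(z_2,x),\widetilde\varphi(z_1,x)) = W(\widetilde\theta(z_2,c),\widetilde\varphi(z_1,c)) - (z_2-z_1)\!\int_x^c\! r(t)\widetilde\theta(z_2,t)\widetilde\varphi(z_1,t)\,dt,
\]
whose right-hand side converges to a finite real number by $(v)$. For $(iii) \Rightarrow (ii)$, with $h(x) := \widetilde\varphi(z_1,x)/\widetilde\varphi(z_2,x)$ and $W(\widetilde\theta(z_2,\,\cdot\,),\widetilde\varphi(z_2,\,\cdot\,)) \equiv 1$, a direct computation yields
\[
W(\widetilde\theta(z_2,x),\widetilde\varphi(z_1,x)) \;=\; h(x) \;-\; \frac{W(\widetilde\varphi(z_1,x),\widetilde\varphi(z_2,x))\,\widetilde\theta(z_2,x)}{\widetilde\varphi(z_2,x)};
\]
nonprincipality forces $\widetilde\theta(z_2,x)/\widetilde\varphi(z_2,x)\to\infty$ as $x\to a^+$, while the Wronskian derivative identity together with integrability (from $(v)$, established along the way) shows that $W(\widetilde\varphi(z_1,x),\widetilde\varphi(z_2,x))$ tends to $0$ sufficiently fast, so $h$ has the same limit as the left-hand side. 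The implication $(ii) \Leftrightarrow (iv)$ is symmetric, swapping the roles of $\widetilde\varphi$ and $\widetilde\theta$. For $(v) \Leftrightarrow (vi)$, I would represent the resolvent kernel of any self-adjoint realization of $\tau|_{(a,c)}$ as $G(z;x,y) = \widetilde\varphi(z,x_<)\psi(z,x_>)/W$, where $\psi(z,\,\cdot\,)$ is a linear combination of $\widetilde\varphi,\widetilde\theta$ encoding the chosen boundary condition at $c$; trace class of this compact, semibounded resolvent is equivalent to finiteness of $\int_a^c r(x)G(z;x,x)\,dx$, which up to a $(z$-dependent) bounded correction near $c$ is precisely condition $(v)$.

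The main obstacle I anticipate is the \emph{nonzero} clause in $(ii)$--$(iv)$: integration gives only a finite limit, and ruling out $L = 0$ for off-diagonal $z_1 \neq z_2$ requires combining the trivial diagonal value $1$ at $z_1 = z_2$ with analyticity/continuity of the Wronskian in $(z_1,z_2)$, together with the principal/nonprincipal dichotomy forbidding asymptotic proportionality of $\widetilde\theta(z_2,\,\cdot\,)$ and $\widetilde\varphi(z_1,\,\cdot\,)$ at $a$.
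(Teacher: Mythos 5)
Your cycle through $(i)$--$(v)$ is workable and uses essentially the same tools as the paper (the Born-series bounds of Lemma~\ref{LemmaRho}, reduction of order, and the integrated Wronskian identity), just in a different order. Two soft spots there: first, your step $(iii)\Rightarrow(ii)$ via the pointwise decomposition $\widetilde\varphi(z_1,\cdot)=W(\widetilde\theta(z_2,\cdot),\widetilde\varphi(z_1,\cdot))\widetilde\varphi(z_2,\cdot)+W(\widetilde\varphi(z_1,\cdot),\widetilde\varphi(z_2,\cdot))\widetilde\theta(z_2,\cdot)$ requires you to show both that $W(\widetilde\varphi(z_1,x),\widetilde\varphi(z_2,x))\to 0$ and that this decay beats the divergence of $\widetilde\theta(z_2,x)/\widetilde\varphi(z_2,x)$; you assert this but do not prove it. The paper sidesteps the issue entirely by running L'H\^opital on $\widetilde\varphi(z_i,x)/\widetilde\theta(z_2,x)$, which shows the two limits in $(ii)$ and $(iii)$ are \emph{always} equal in $\overline{\R}$, so the equivalence (including the nonzero clause) transfers for free. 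Second, your treatment of the nonzero clause is only sketched; the clean route is the paper's: $(v)\Rightarrow(i)$ trivially, then Corollary~\ref{CorNorm} plus uniqueness of the principal solution give $\widetilde\varphi(z_1,x)/\widetilde\varphi(z_2,x)\to f(z_1)/f(z_2)\neq 0$ directly, and the L'H\^opital identities propagate non-vanishing to $(iii)$ and $(iv)$. Both issues are repairable with ingredients you already have on the table.

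The genuine gap is $(v)\Rightarrow(vi)$. You reduce it to the assertion that a compact, semibounded resolvent with continuous kernel is trace class \emph{if and only if} $\int_a^c G(z,x,x)\,r(x)\,dx<\infty$. The ``only if'' direction (trace class implies the diagonal integral is finite and equals the trace) is the comparatively easy one and is what the paper uses for $(vi)\Rightarrow(i)$. But the direction you actually need --- finiteness of the diagonal integral implies trace class --- is false for general self-adjoint integral operators and is a nontrivial Mercer-type theorem even for positive operators with continuous kernels on a non-compact base (one must exhaust $(a,c)$ by compacts, apply Mercer on each, and pass to the limit with a Fatou argument in an eigenbasis). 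You neither prove nor cite this, and it is precisely the technical heart of the implication: the paper instead spends the bulk of its appendix proof on it, truncating to regular problems on $(x_\varepsilon,c)$, comparing with Dirichlet eigenvalues, invoking eigenvalue convergence as $\varepsilon\to 0$ from \cite[Thm.~10.8.2]{Ze05}, and concluding $\sum_n(\lambda_n-\lambda)^{-1}<\infty$ via Fatou. As written, your one-line appeal to ``trace class $\Leftrightarrow$ finiteness of the diagonal integral'' is a gap; if you want to keep your route you must supply the positive-kernel Mercer argument (and note that positivity requires working at $z$ below the spectrum, and that passing to arbitrary self-adjoint realizations uses that their resolvents differ by finite rank).
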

\begin{proof}
The equivalence between $(i)$--$(v)$ is rather simple, however point $(vi)$ requires more technical arguments. We provide the complete proof in Appendix \ref{AppendixA}.
\end{proof}

We can now state the converse of Corollary \ref{CorNorm}.
\begin{corollary}\label{CorConv}
    Assume Hypothesis \ref{HypoInt} and let $\widetilde \varphi$ be an entire fundamental solution of $\tau f = zf$ which is principal at $x = a$ for all $z \in \R$. If $\widetilde \varphi$ satisfies
    \begin{align*}
        \lim_{x \to a^+} \frac{\widetilde \varphi(z_1,x)}{\widetilde \varphi(z_2,x)} = 1, \qquad z_1, z_2 \in \mathbb C,
    \end{align*}
    then Hypothesis \ref{Hypothesis} holds at $x = a$ and $\widetilde \varphi$ is equal to $\varphi$ constructed via \eqref{defPhi}, \eqref{phin} up to a multiplicative constant.
\end{corollary}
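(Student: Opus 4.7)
The plan is to first obtain Hypothesis~\ref{Hypothesis} at $x = a$ from Theorem~\ref{TFAE}, then to exploit the uniqueness of principal solutions together with the shared normalization to identify $\widetilde\varphi$ with $\varphi$ up to a multiplicative constant. As a preliminary, I would argue that the existence of an entire principal solution $\widetilde\varphi$ already forces Hypothesis~\ref{HypoGZ}: for any self-adjoint realization of $\tau|_{(a,c)}$, real eigenvalues must appear as zeros of a nontrivial entire function of $z$ obtained by inserting $\widetilde\varphi(z,\cdot)$ into the boundary condition at $c$, so the spectrum is discrete (cf.~\cite{GZ06}). By the equivalence recalled just before Theorem~\ref{TFAE}, Hypothesis~\ref{HypoGZ} then provides an entire companion $\widetilde\theta$ with $W(\widetilde\theta,\widetilde\varphi) = 1$, and the setup of Theorem~\ref{TFAE} is in force. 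The assumed limit $\lim_{x\to a^+}\widetilde\varphi(z_1,x)/\widetilde\varphi(z_2,x) = 1$ for all $z_1,z_2 \in \C$ realizes condition~$(ii)$ of Theorem~\ref{TFAE} (the value $1$ lies in $\R\setminus\{0\}$), so the implication $(ii)\Rightarrow(i)$ yields Hypothesis~\ref{Hypothesis} at $x=a$.

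With Hypothesis~\ref{Hypothesis} in hand, the series \eqref{defPhi}, \eqref{phin} produces an entire solution $\varphi(z,x)$ (Proposition~\ref{PropPhiEntire}) that, by Corollary~\ref{CorPrinc}, is principal at $a$ for every $z \in \R$, and, by Corollary~\ref{CorNorm}, satisfies $\lim_{x\to a^+}\varphi(z_1,x)/\varphi(z_2,x)=1$ for all $z_1,z_2 \in \C$. Since principal solutions of $\tau f = zf$ at $a$ are unique up to a scalar, for each real $z$ there is $c(z)$ with $\widetilde\varphi(z,x) = c(z)\varphi(z,x)$; the identity extends to all $z \in \C$ by analytic continuation (both sides being entire in $z$ for each fixed $x$), so $c$ is meromorphic on $\C$. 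Comparing the shared normalizations,
\[
1 \;=\; \lim_{x \to a^+} \frac{\widetilde\varphi(z_1,x)}{\widetilde\varphi(z_2,x)} \;=\; \frac{c(z_1)}{c(z_2)}\lim_{x \to a^+}\frac{\varphi(z_1,x)}{\varphi(z_2,x)} \;=\; \frac{c(z_1)}{c(z_2)}, \qquad z_1,z_2 \in \C,
\]
which forces $c$ to be a (nonzero) constant, completing the proof.

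The main obstacle I expect is the preliminary step: extracting Hypothesis~\ref{HypoGZ} from only the entire principal solution $\widetilde\varphi$, since the equivalence cited from \cite{GZ06} is phrased in terms of a full entire fundamental system. The elementary observation that, under the stated assumption, eigenvalues of any self-adjoint restriction on $(a,c)$ appear as isolated zeros of a nontrivial entire function of $z$ should close this gap, after which the remainder of the argument is a direct application of Theorem~\ref{TFAE} together with the constructions of Section~\ref{sectphi}.
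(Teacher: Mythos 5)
Your proof is correct and follows essentially the route the paper intends: Corollary \ref{CorConv} is stated immediately after Theorem \ref{TFAE} with no separate proof, precisely because it is meant to be the implication $(ii)\Rightarrow(i)$ of that theorem combined with the uniqueness of principal solutions and the normalization of Corollary \ref{CorNorm}, which is exactly your second and third paragraphs. The one piece of genuine content you add is the preliminary derivation of Hypothesis \ref{HypoGZ} from the bare existence of the entire principal solution, and here your one-line justification is slightly imprecise: eigenfunctions of an arbitrary self-adjoint realization of $\tau|_{(a,c)}$ are not in general multiples of $\widetilde\varphi(z,\cdot)$, so they are not all detected by inserting $\widetilde\varphi$ into the boundary condition at $c$. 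The argument does close, but via two standard observations you should make explicit: in the limit point case any eigenfunction must be a multiple of the principal solution (a nonprincipal solution in $L^2((a,c);r\,dx)$ near $a$ would force the principal one into $L^2$ as well, contradicting limit point), so the eigenvalues of the distinguished (Friedrichs-type) realization lie in the zero set of the nontrivial entire function $z\mapsto\widetilde\varphi(z,c)$; and discreteness then transfers to every other self-adjoint realization because all of them are extensions of the same minimal operator with finite deficiency indices, hence share their essential spectrum. With those two sentences added, the proposal is complete.
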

\begin{remark}\label{RemarkBorn}
    We already observed in Corollary~\ref{CorNorm} that the Born series given through \eqref{defPhi} and \eqref{phin} has the convenient property of the leading term $\varphi_0$ being dominant as $x \to a^+$, that is, higher order terms $\varphi_n$ can be viewed as small corrections. Theorem \ref{TFAE} further tells us that this happens if and only if $\tau|_{(a,c)}$ has self-adjoint realizations with trace class resolvents, meaning that Hypothesis \ref{Hypothesis} is the most general condition under which one can expect a well-behaved Born series with the spectral parameter as the coupling constant. We find it interesting that the condition for the mere existence of an entire fundamental solution $\widetilde \varphi$ which is principal at $x = a$ is significantly weaker, and only requires self-adjoint realizations of $\tau|_{(a,c)}$ to have a purely discrete spectrum (see Hypothesis \ref{HypoGZ}). Being entire, $\widetilde \varphi$ will again have an everywhere convergent power series expansion of the form \eqref{defPhi}, however with $\widetilde \varphi_n$ not necessarily given through \eqref{phin}. As in the absence of the trace class resolvent condition the behavior of $\widetilde \varphi$ must necessarily depend on the spectral parameter $z$ due to Theorem \ref{TFAE} $(ii)$, it follows that higher order terms $\widetilde \varphi_n$ cannot be viewed as small corrections for $x \to a^+$, despite the series being convergent. See Section \ref{subLag} for an explicit example of this phenomenon.  
\end{remark}

Returning to the normalization \eqref{phiNorm}, we obtain the following corollary.

\begin{corollary}\label{CorThetaNorm}
    Assume Hypothesis \ref{HypoInt} holds and let $\varphi$ satisfy the normalization \eqref{phiNorm}. Then any entire fundamental solution $\theta$ satisfying $W(\theta(z, \, \cdot \, ), \varphi(z, \, \cdot \,)) = 1$ will also satisfy
    \begin{align}\label{ThetaWronski}
        \lim_{x \to a^+} \frac{\theta(z_1,x)}{\theta(z_2,x)} = 1, \qquad  \lim_{x \to a^+}  W(\theta(z_2, x), \varphi(z_1, x)) &= 1, \qquad z_1, z_2 \in \mathbb R.
    \end{align}
\end{corollary}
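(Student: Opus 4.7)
The plan is to combine the Born series machinery of Section \ref{sectphi} with Theorem \ref{TFAE} and a gauge reduction. By Corollary \ref{CorConv}, the normalization \eqref{phiNorm} forces Hypothesis \ref{Hypothesis} at $a$ and identifies $\varphi$ with the iteratively constructed solution \eqref{defPhi}--\eqref{phin} up to an inessential multiplicative constant, which cancels in both the ratio and the Wronskian appearing in the corollary. Theorem \ref{TFAE}, items $(iii)$ and $(iv)$, applied to $(\widetilde\varphi,\widetilde\theta) = (\varphi,\theta)$ then guarantees that the two limits in \eqref{ThetaWronski} exist as nonzero real numbers; denote them $c(z_1,z_2)$ and $L(z_1,z_2)$. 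The diagonal values $c(z,z) = L(z,z) = 1$ are immediate since $W(\theta(z,\cdot),\varphi(z,\cdot)) \equiv 1$, so the task reduces to extending these equalities off the diagonal.

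The first technical step is the auxiliary claim $\lim_{x\to a^+} W(\varphi(z_2,x),\varphi(z_1,x)) = 0$ for all $z_1,z_2 \in \C$. From the construction \eqref{phin} one has $\varphi_n = \np_a A_n - u_a B_n$ for $n \geq 1$, with $A_n(x) = \int_a^x u_a \varphi_{n-1} r\,dt$ and $B_n(x) = \int_a^x \np_a \varphi_{n-1} r\,dt$, and a direct calculation (using the companion formula $\varphi_n^{[1]} = \np_a^{[1]} A_n - u_a^{[1]} B_n$) gives $W(\varphi_n,\varphi_m)(x) = B_n A_m - A_n B_m$ for $n,m\geq 1$, while $W(\varphi_0,\varphi_m)(x) = -A_m(x)$ and $W(\varphi_n,\varphi_0)(x) = A_n(x)$ in the boundary cases. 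By Hypothesis \ref{Hypothesis} each $A_n(x),B_n(x)$ tends to zero as $x\to a^+$, and the bounds $|A_n(x)|,|B_n(x)| \leq \rho^n(x)$ inherent in the proof of Lemma \ref{LemmaRho} yield a geometric majorant for the double series permitting termwise passage to the limit.

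With the auxiliary in hand, both limits in \eqref{ThetaWronski} are invariant under the gauge transformation $\theta \mapsto \theta + g(z)\varphi$ with $g$ entire and real on $\R$---the only transformation preserving $W(\theta,\varphi)=1$. For the Wronskian identity the change equals $g(z_2)W(\varphi(z_2,x),\varphi(z_1,x)) \to 0$ by the auxiliary, while for the ratio identity the added principal term $g(z_j)\varphi(z_j,x)$ is negligible against the nonprincipal leading behavior of $\theta(z_j,x)$ near $a$. Hence it suffices to verify \eqref{ThetaWronski} for a single convenient entire $\theta$. For instance, Lemma \ref{LongLem}$(i)$ produces $\theta(z,x) = \sum_n \theta_n(x)(z-\lambda)^n$ with $\theta_0 = \np_a$ and $\theta_n$, $n\geq 1$, defined by iterated variation of parameters with base points chosen so as to keep the series entire; an analogous integral representation of $W(\theta_n,\varphi_m)(x)$ together with the same integrability estimates then shows each such coefficient tends to zero as $x\to a^+$ for $(n,m)\neq(0,0)$, proving $L(z_1,z_2)=1$. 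The companion identity $c(z_1,z_2)=1$ follows from combining $W(\theta(z_1,x),\varphi(z_2,x)) \to 1$ (now established) with $W(\theta(z_1,x),\varphi(z_1,x)) = 1$ and the assumed $\varphi(z_1,x)/\varphi(z_2,x) \to 1$.

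The principal obstacle is the explicit control of the dual coefficients $\theta_n$: unlike the $\varphi_n$, the iterated integrals defining them generally diverge at $a$, so one must choose base points carefully to retain entirety while preserving the precise cancellations needed to conclude $W(\theta_n,\varphi_m) \to 0$ with uniform enough bounds to interchange with the summation.
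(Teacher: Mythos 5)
Your proposal misses the short argument the paper actually uses, and the route you substitute for it has a genuine gap at its decisive step. The paper's proof of this corollary is essentially one line: since $\frac{d}{dx}W(\theta(z_2,\cdot),\varphi(z_1,\cdot)) = (z_2-z_1)\theta(z_2,x)\varphi(z_1,x)r(x)$ has fixed sign near $a$ (all solutions being nonoscillatory there), the Wronskian is monotone and its limit exists in $\R\cup\{\pm\infty\}$; L'H\^opital's rule applied to $\varphi(z_1,x)/\varphi(z_2,x)$ written as a quotient of $\varphi(z_i,x)/\theta(z_2,x)$ (the identities \eqref{tildePhi} and \eqref{tildeTheta} in the proof of Theorem \ref{TFAE}) then identifies $\lim_{x\to a^+}W(\theta(z_2,x),\varphi(z_1,x))$ simultaneously with $\lim_{x\to a^+}\varphi(z_1,x)/\varphi(z_2,x)$, which is $1$ by hypothesis, and with $\lim_{x\to a^+}\theta(z_1,x)/\theta(z_2,x)$. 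This works for an arbitrary entire $\theta$ with $W(\theta,\varphi)=1$, so no gauge reduction and no coefficient analysis are needed.

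The gap in your version is the final step. After the (correct) auxiliary claim $W(\varphi(z_2,\cdot),\varphi(z_1,\cdot))\to 0$ and the (correct) gauge reduction, everything rests on the assertion that for a suitably constructed $\theta$ one has $W(\theta_n,\varphi_m)(x)\to 0$ for all $(n,m)\neq(0,0)$ with bounds uniform enough to interchange the limit with the double sum. You do not establish this, and it is not a routine analogue of Lemma \ref{LemmaRho}: the estimate $|\varphi_n|\leq\rho^n|u_a|$ has no counterpart for the $\theta_n$, whose iterated integrals involve $\int_a^x\varphi_0\theta_{n-1}r\,dt$ and therefore already require knowing $\theta_{n-1}=O(\theta_0)$ near $a$ — a fact the paper only obtains from Lemma \ref{ThetaLemma} and Corollary \ref{CorTheta}, which themselves rely on the normalization \eqref{ThetaWronski} you are trying to prove. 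Moreover, whether $W(\theta_n,\varphi_0)\to 0$ depends on the constants $B_n$ in the representation \eqref{thetaNformula} (a nonzero $B_n$ contributes $B_nW(\theta_0,\varphi_0)=B_n$ to the limit), and showing $B_n=0$ is exactly the content of Lemma \ref{Lemma1}, proved later with this corollary as an ingredient. You flag this as ``the principal obstacle'' but leave it unresolved, so the proof as written is incomplete; the monotone-Wronskian/L'H\^opital identity is the missing idea that circumvents all of it.
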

\begin{proof}
    That follows immediately from \eqref{tildePhi} and \eqref{tildeTheta} in the proof of Theorem \ref{TFAE} in Appendix \ref{AppendixA}.
\end{proof}
Note $W(\theta(z, \, \cdot \, ), \varphi(z, \, \cdot \,)) = 1$ implies that $\theta(z, \, \cdot \,)$ is linearly independent of $\varphi(z, \, \cdot \,)$ and hence nonprincipal.

For technical reasons, we will also need that $\lim_{x \to a^+} \frac{\theta(z,x)}{\theta(\lambda,x)}$ converges locally uniformly for $z \in \mathbb C$. This is shown next.
\begin{lemma}\label{ThetaLemma}
    Denote by $h_x(z) = \frac{\theta(z,x)}{\theta(\lambda,x)}$. Then as $x \to a^+$, the entire function $h_x(z)$ converges locally uniformly in $\mathbb C$ to the constant function $1$.
\end{lemma}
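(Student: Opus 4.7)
The plan is to apply Vitali's convergence theorem (also known as Vitali--Porter). For $x$ sufficiently close to $a$, the solution $\theta(\lambda,\cdot)$ of $\tau y = \lambda y$ is zero-free near $a$ (since it is a nonprincipal solution and hence nonoscillatory there), so $h_x(z) = \theta(z,x)/\theta(\lambda,x)$ is entire in $z$. Corollary~\ref{CorThetaNorm} provides the pointwise convergence $h_x(z) \to 1$ as $x \to a^+$ for every $z \in \R$, and $\R$ has accumulation points in $\C$. Vitali's theorem will then upgrade this to local uniform convergence on all of $\C$, provided the family $\{h_x\}$ is locally uniformly bounded on $\C$ as $x \to a^+$.

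To establish the local uniform bound, I would fix an interior point $x_0 \in (a,c)$ and derive a Volterra integral equation for $\theta(z,\cdot)$ on $(a, x_0)$. Rewriting $\tau\theta(z,\cdot) = z\theta(z,\cdot)$ as $(\tau - \lambda)\theta(z,\cdot) = (z-\lambda)\theta(z,\cdot)$ and applying variation of parameters with the $\lambda$-fundamental system $\{u_a, \np_a\}$ normalized by $W(\np_a, u_a) = 1$ and anchored at $x_0$, one obtains
\[
\theta(z,x) = \alpha(z) u_a(x) + \beta(z) \np_a(x) + (z-\lambda) \int_{x_0}^x \bigl[u_a(x)\np_a(t) - u_a(t)\np_a(x)\bigr]\,\theta(z,t) r(t)\,dt,
\]
with $\alpha(z), \beta(z)$ entire in $z$, determined by $\theta(z, x_0)$ and $\theta^{[1]}(z, x_0)$. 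For $x \in (a, x_0)$, inequality \eqref{NZIneq} gives $|u_a(x)\np_a(t) - u_a(t)\np_a(x)| \leq |u_a(t)\np_a(x)|$ when $x < t$. Setting $v(x) = |\theta(z,x)|/|\np_a(x)|$ and applying a reverse Gr\"onwall estimate, exploiting the integrability $u_a \np_a r \in L^1((a, x_0))$ from Hypothesis~\ref{Hypothesis}, one deduces
\[
|\theta(z,x)| \leq A(z)\,|\np_a(x)|\,\exp\bigl(|z-\lambda|\rho(x_0)\bigr), \qquad x \in (a, x_0),
\]
with $A(z)$ continuous in $z \in \C$. Since $\theta(\lambda, x)/\np_a(x)$ has a finite nonzero limit as $x \to a^+$ (because $\theta(\lambda, \cdot)$ is nonprincipal, and hence a nonzero multiple of $\np_a$ plus a multiple of $u_a$ modulo our chosen $u_a, \np_a$), this forces $|h_x(z)| \leq M_K$ for $z$ on any compact $K \subset \C$ and all $x$ sufficiently close to $a$.

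The main technical subtlety is the choice of an interior base point $x_0 \in (a,c)$ rather than $x_0 = a$: one cannot anchor the Volterra integration at the singular endpoint, since the kernel term $u_a(x)\np_a(t)\theta(z,t)r(t)$ behaves like $u_a(x)\np_a^2(t)r(t)$ near $t = a$, which is not guaranteed to be integrable under our hypotheses (we control only $u_a \np_a r \in L^1$ via Hypothesis~\ref{Hypothesis}). The interior anchoring introduces entire coefficients $\alpha(z), \beta(z)$, but these remain bounded on compact subsets of $\C$, so the Gr\"onwall bound is locally uniform on $\C$. Combining this local uniform boundedness with the pointwise convergence from Corollary~\ref{CorThetaNorm} and invoking Vitali's theorem then completes the proof.
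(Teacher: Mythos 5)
Your proposal is correct, but it takes a genuinely different route from the paper. The paper's proof is essentially two lines: it writes $\theta(z,x) = \varphi(z,x)\int_x^{c_0}\frac{dt}{p(t)\varphi^2(z,t)}+\eta(z)\varphi(z,x)$ with $\eta$ holomorphic, and transfers the locally uniform estimate $\varphi(z,x)/\varphi(\lambda,x)=1+O(\rho(x))$ --- already available from Lemma~\ref{LemmaRho}, since $|\varphi_n(x)|\leq\rho^n(x)|u_a(x)|$ sums to a geometric series locally uniformly in $z$ --- through this reduction-of-order representation to $\theta$. You instead establish local uniform boundedness of the family $\lbrace h_x\rbrace$ directly, via a Volterra equation anchored at an interior point $x_0$ and a backward Gr\"onwall estimate whose exponent is controlled by $\rho(x_0)<\infty$, and then invoke Vitali--Porter together with the pointwise convergence on $\R$ supplied by Corollary~\ref{CorThetaNorm}. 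Both arguments ultimately rest on the same integrability $u_a\np_a r\in L^1$ near $a$; your route yields as a byproduct the explicit bound $|\theta(z,x)|\leq A(z)|\np_a(x)|\exp(|z-\lambda|\rho(x_0))$, which is a $\theta$-analogue of Lemma~\ref{LemmaRho} in the spirit of the later Lemma~\ref{LemThetaBound}, at the cost of importing a normal-families theorem, whereas the paper's route is shorter because the uniformity has already been paid for on the $\varphi$ side. Two minor points, neither a gap: your kernel $u_a(x)\np_a(t)-u_a(t)\np_a(x)$ carries the opposite sign to the one consistent with $W(\np_a,u_a)=1$ (cf.~\eqref{phin}), which is harmless since only its modulus enters the estimate; and Vitali should formally be applied along arbitrary sequences $x_n\to a^+$, with the identity theorem forcing every locally uniform subsequential limit to equal the constant $1$ because it equals $1$ on $\R$.
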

\begin{proof} Note that locally in $z$ we can write
\begin{align*}
\theta(z,x) = \varphi(z,x)\int_x^{c_0} \frac{dt}{p(t)\varphi^2(z,t)}+\eta(z)\varphi(z,x),
\end{align*} where $c_0$ is sufficiently close to $a$ such that $\varphi(z,t)$ does not vanish, and $\eta(z)$ is holomorphic. Now as for $x \to a^+$ we have $\varphi(z,x)/\varphi(\lambda,x) = 1 + O(\rho(x))$ with the error being locally uniform in $z$, we conclude that the same is true for $\theta$.
    \end{proof}
    
 As a corollary of Lemma~\ref{ThetaLemma} we can now prove the following (cf.~Lemma \ref{LemmaRho}).
    \begin{corollary}\label{CorTheta}
        Consider the power series expansion of $\theta$ with respect to $z$,
        \begin{align*}
            \theta(z,x) = \sum_{n=0}^\infty \theta_n(x)(z-\lambda)^n.
        \end{align*}
        Then
        \begin{align}\label{ThetaLimit}
            \lim_{x\to a^+}\frac{\theta_n(x)}{\theta_0(x)} = 0 \ \text{ for all} \ n \geq 1.
        \end{align}
    \end{corollary}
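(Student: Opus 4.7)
The plan is to deduce the coefficient-wise convergence from the locally uniform convergence provided by Lemma~\ref{ThetaLemma}. First I would observe that evaluating the power series at $z=\lambda$ gives $\theta_0(x)=\theta(\lambda,x)$, so the function $h_x(z)$ from Lemma~\ref{ThetaLemma} admits the convergent expansion
\begin{equation*}
h_x(z)=\frac{\theta(z,x)}{\theta(\lambda,x)}=\sum_{n=0}^\infty \frac{\theta_n(x)}{\theta_0(x)}(z-\lambda)^n, \qquad z\in\mathbb{C},
\end{equation*}
which is nothing but the Taylor expansion of $h_x$ about $\lambda$. The quantities whose limits we must control are therefore precisely the Taylor coefficients of the entire function $h_x$.

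Next I would extract these coefficients via Cauchy's integral formula. Fix any $r>0$ and let $\Gamma_r$ be the circle $|z-\lambda|=r$ traversed once counterclockwise. Then for each $n\geq 0$ and each $x$ sufficiently close to $a$ (so that $\theta_0(x)\neq 0$, which is guaranteed in a vicinity of $a$ since $\theta(\lambda,\,\cdot\,)$ is a nonprincipal solution of $\tau f=\lambda f$ and hence nonvanishing near $a$),
\begin{equation*}
\frac{\theta_n(x)}{\theta_0(x)}=\frac{1}{2\pi \I}\oint_{\Gamma_r}\frac{h_x(z)}{(z-\lambda)^{n+1}}\,dz.
\end{equation*}

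By Lemma~\ref{ThetaLemma}, $h_x(z)\to 1$ uniformly on the compact set $\Gamma_r$ as $x\to a^+$. Passing to the limit under the integral (permissible because the convergence is uniform and the denominator $(z-\lambda)^{n+1}$ is bounded away from $0$ on $\Gamma_r$) yields
\begin{equation*}
\lim_{x\to a^+}\frac{\theta_n(x)}{\theta_0(x)}=\frac{1}{2\pi \I}\oint_{\Gamma_r}\frac{dz}{(z-\lambda)^{n+1}}=\delta_{n,0},
\end{equation*}
which is exactly the claim \eqref{ThetaLimit} for $n\geq 1$. There is no real obstacle here: the entire content of the corollary is that locally uniform convergence of entire functions transfers to their Taylor coefficients, and all the nontrivial work has already been absorbed into Lemma~\ref{ThetaLemma}.
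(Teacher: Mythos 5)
Your proof is correct and follows essentially the same route as the paper: both deduce the claim from the locally uniform convergence $h_x\to 1$ of Lemma~\ref{ThetaLemma}, the paper by invoking convergence of the derivatives $\partial_z^n h_x$ at $z=\lambda$, and you by unpacking that standard fact via Cauchy's integral formula. The only difference is that you make the coefficient extraction explicit, which is a matter of presentation rather than substance.
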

    \begin{proof}
        Note that as $h_x(z) = \frac{\theta(z,x)}{\theta(\lambda,x)} \to 1$ converges locally uniformly for $x \to a^+$, we can conclude that $\partial^n_z h_x(z) \to 0$ locally uniformly for $x \to a^+$ and $n \geq 1$. In particular,
        \begin{align*}
        \partial^n_z h_x(z)|_{z=\lambda} = \frac{n!\theta_n(x)}{\theta_0(x)} \to 0 \ \text{ for} \ x \to a^+.
        \end{align*}
    \end{proof}
    Due to Lemma~\ref{LongLem} $(i)$  we know that $(\tau-\lambda)\theta_0(x) = 0$ and $(\tau-\lambda) \theta_n(x) = \theta_{n-1}(x)$ for $n \geq 1$. Hence we can write a general expression for $\theta_n$ in terms of $\theta_{n-1}$
    \begin{align}\nonumber
        \theta_{n}(x) &= A_n \varphi_0(x) + B_n \theta_0(x) 
        \\\label{thetaNformula}
        &\quad + \theta_0(x)\int_a^x \varphi_0(t) \theta_{n-1}(t) r(t) dt + \varphi_0(x) \int_x^c \theta_0(t) \theta_{n-1}(t) r(t) dt.
    \end{align}
    Note that the two integrals above sum up to the usual formula involving the Green's function, however we prefer to keep these integrals separate. 
    
    In the proof of Lemma \ref{Lemma1} we will show that $B_n = 0$, while the constant $A_n$ will be determined later and depends additionally on the choice of $c \in (a,b)$. Note that it is crucial that $\theta_{n-1}(x)= O(\theta_0(x))$ as $x \to a^+$ for all $n \geq 1$, to make sure that the first integral in \eqref{thetaNformula} exists due to Hypothesis \ref{Hypothesis}.

    We finish this section with a few technical results on the behavior of $\theta_n(x)$ as $x \to a^+$, which will be used in the following section.
    \begin{lemma}\label{lemTheta_n}
        The functions $W_n(x) = W(\theta_n(x), \varphi_0(x))$ and $\theta_n(x)$ are nonoscillatory as $x \to a^+$ for all $n \geq 0$. Additionally, the function $\widetilde W_n(x) = W(\theta_n(x), \theta_0(x))$ is nonoscillatory as $x \to a^+$ for $n \geq 1$.
    \end{lemma}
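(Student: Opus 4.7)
The plan is to prove all three claims by a simultaneous induction on $n$, driven by a single Lagrange-type identity. The starting point is the following standard computation: if $\tau u = f$ and $\tau v = g$, then multiplying $-(pu')' + qu = rf$ by $v$, subtracting the analogous identity for $v$ multiplied by $u$, and recognizing the resulting left-hand side as a total derivative, one obtains $W(u,v)'(x) = r(x)[f(x)v(x) - u(x)g(x)]$. Substituting $\tau\varphi_0 = \lambda\varphi_0$, $\tau\theta_0 = \lambda\theta_0$, and $\tau\theta_n = \lambda\theta_n + \theta_{n-1}$ for $n\geq 1$ (which follows from \eqref{y_n} and the expansion preceding \eqref{thetaNformula}), the $\lambda$-terms cancel and one gets the clean identities
\begin{align*}
W_n'(x) = r(x)\theta_{n-1}(x)\varphi_0(x),\qquad \widetilde W_n'(x) = r(x)\theta_{n-1}(x)\theta_0(x).
\end{align*}

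The base case $n=0$ is immediate: $\theta_0$ is a nonprincipal solution at the nonoscillatory endpoint $a$, so it has a definite sign on some interval $(a,\delta)$, while $W_0 = W(\theta_0,\varphi_0) \equiv 1$ by the standing Wronskian normalization, which is trivially nonoscillatory.

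For the inductive step, I first record the elementary fact that a continuous function is nonoscillatory at $a$ precisely when it has a definite sign on some $(a,\delta)$. Assuming $\theta_{n-1}$ is nonoscillatory near $a$, and recalling that $\varphi_0,\theta_0$ are nonoscillatory by hypothesis and $r>0$ a.e., the products $r\theta_{n-1}\varphi_0$ and $r\theta_{n-1}\theta_0$ have a definite sign on a sufficiently small $(a,\delta)$. Hence $W_n$ and $\widetilde W_n$ are monotone there, so each has at most one zero near $a$, yielding their nonoscillation. To close the induction on $\theta_n$, I use the elementary ratio identity
\begin{align*}
\left(\frac{\theta_n}{\varphi_0}\right)'(x) = -\frac{W_n(x)}{p(x)\varphi_0^2(x)},
\end{align*}
valid wherever $\varphi_0\neq 0$. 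Having just shown $W_n$ is monotone and has at most one zero near $a$, after further shrinking to some $(a,\delta')\subset(a,\delta)$ I can arrange $W_n$ to have a definite sign. Then $\theta_n/\varphi_0$ is strictly monotonic on $(a,\delta')$, so it has at most one zero, and since $\varphi_0\neq 0$ on $(a,\delta')$, the same is true for $\theta_n$.

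The only step that requires any care is bookkeeping the successive shrinking of neighborhoods: translating the inductive hypothesis \emph{nonoscillatory} into the usable form \emph{has definite sign on some $(a,\delta)$}, using this to deduce monotonicity of $W_n$ and $\widetilde W_n$, and then shrinking once more to get strict sign of $W_n$ before applying the ratio identity for $\theta_n$. Everything else is standard Sturm--Liouville Wronskian manipulation, and no further hypothesis beyond what is already in force is required.
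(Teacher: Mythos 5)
Your proof is correct and follows essentially the same route as the paper: the same Lagrange identity yielding $W_n' = r\,\theta_{n-1}\varphi_0$ and $\widetilde W_n' = r\,\theta_{n-1}\theta_0$, the same induction driven by monotonicity, and the same ratio identity $(\theta_n/\varphi_0)' = -W_n/(p\varphi_0^2)$ to pass from $W_n$ to $\theta_n$. The only cosmetic differences are that you verify the base case directly from the Wronskian normalization and the nonoscillation of $\tau f=\lambda f$ at $a$ (the paper cites Theorem \ref{TFAE}~$(iii)$) and that you are more explicit about the neighborhood-shrinking bookkeeping.
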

    \begin{proof}
        We proceed by induction. The claim is clearly true for $n= 0$ by Theorem \ref{TFAE} $(iii)$. Let us assume it is true up to $n-1$. Then we can use the general formula 
        \begin{align*}
            \Big[W(f, g)\Big]^{t=\beta}_{t=\alpha} =  \int_\alpha^\beta \big[\big(\tau f(t)\big)g(t) -  f(t) \big(\tau g(t)\big)\big] r(t) dt
        \end{align*}
        to obtain
        \begin{equation*}
            \partial_x W_{n}(x) = \big[\big(\tau \theta_{n}(x)\big)\varphi_0(x) - \big(\tau \varphi_0(x)\big)\theta_{n}(x)\big] r(x)= \theta_{n-1}(x) \varphi_0(x) r(x).
        \end{equation*}
        By the induction hypothesis, the right-hand side is nonoscillatory as $x \to a^+$. Hence, $W_n(x)$ is monotonic as $x \to a^+$, implying that it is nonoscillatory. To see that $\theta_n(x)$ is nonoscillatory, observe that
        \begin{align*}
            \bigg( \frac{\theta_n(x)}{\varphi_0(x)} \bigg)'= -\frac{W_n(x)}{p(x)\varphi^2_0(x)},
        \end{align*}
        where we already know that the right-hand side is nonoscillatory. As before this implies that $\frac{\theta_n(x)}{\varphi_0(x)}$ is monotonic, in particular nonoscillatory near $a$. As $\varphi_0(x)$ is nonoscillatory, the same must be true of $\theta_n(x)$. 

        The proof for $\widetilde W_n$ is similar but no longer requires induction. For $n \geq 1$ we have
        \begin{align*}
            \partial_x \widetilde W_n(x) = \theta_{n-1}(x)\theta_0(x) r(x),
        \end{align*}
        hence $\widetilde W_n$ is monotonic, implying that $\widetilde W_n(x)$ is nonoscillatory, as $x \to a^+$.
    \end{proof}
   Note that we have also shown in the previous proof that $\big(\frac{\theta_n(x)}{\varphi_0(x)}\big)^{\pm 1}$ is monotonic as $x \to a^+$, and we will use this fact later.
   \begin{corollary}\label{CorPhiTheta}
        The function $\big(\frac{\theta_n(x)}{\varphi_0(x)}\big)^{\pm 1}$ is monotonic as $x \to a^+$. In particular the limit $\displaystyle \lim_{x \to a^+} \tfrac{\varphi_0(x)}{\theta_n(x)}$
       exists in $\mathbb R \cup \lbrace \pm \infty \rbrace$.
   \end{corollary}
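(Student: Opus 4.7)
The plan is to leverage the computations already carried out in the proof of Lemma \ref{lemTheta_n} and extract monotonicity from the sign of a derivative. Specifically, the identity
\begin{equation*}
    \left( \frac{\theta_n(x)}{\varphi_0(x)} \right)' = -\frac{W_n(x)}{p(x)\varphi_0^2(x)}
\end{equation*}
was established there. Since $W_n$ is monotonic as $x \to a^+$ (as shown in the proof of Lemma \ref{lemTheta_n} by differentiating $W_n$ and invoking the induction hypothesis), it is in particular nonoscillatory, so $W_n$ has constant sign on some interval $(a, c')$. Combined with $p(x)\varphi_0^2(x) > 0$, the right-hand side above has constant sign on $(a, c')$, which immediately gives monotonicity of $\theta_n/\varphi_0$ near $a$.

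Next, I would pass from monotonicity of $\theta_n/\varphi_0$ to monotonicity of its reciprocal $\varphi_0/\theta_n$. For this I need to know that $\theta_n(x)$ is eventually of constant sign as $x \to a^+$, so that the reciprocal is well-defined and continuous on a punctured neighborhood of $a$. But this is exactly the nonoscillatory statement for $\theta_n$ proved in Lemma \ref{lemTheta_n}: $\theta_n$ has only finitely many zeros near $a$, so after possibly shrinking $c'$ it does not vanish. On such an interval, the map $t \mapsto 1/t$ (restricted either to $(0,\infty)$ or to $(-\infty,0)$) is strictly monotonic, so composing with the monotonic function $\theta_n/\varphi_0$ yields that $\varphi_0/\theta_n$ is also monotonic on $(a,c')$, in the opposite direction.

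Finally, the existence of $\lim_{x \to a^+} \varphi_0(x)/\theta_n(x)$ in $\mathbb{R} \cup \{\pm\infty\}$ follows from the standard fact that a monotonic real-valued function on an interval $(a, c')$ has a one-sided limit at $a$ in the extended reals. This gives both claims of the corollary.

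The argument is essentially a harvesting of facts already collected in Lemma \ref{lemTheta_n}; I do not anticipate any real obstacle. The only point requiring minor care is ensuring the reciprocal $\varphi_0/\theta_n$ is well-defined near $a$, which is handled by the nonoscillation of $\theta_n$ combined with the assumption that $\varphi_0 = u_a$ is nonzero on $(a, c]$.
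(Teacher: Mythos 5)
Your proposal is correct and follows essentially the same route as the paper, which simply observes that the monotonicity of $\big(\theta_n/\varphi_0\big)^{\pm 1}$ was already established in the proof of Lemma \ref{lemTheta_n} via the identity $\big(\tfrac{\theta_n}{\varphi_0}\big)' = -\tfrac{W_n}{p\varphi_0^2}$ and the nonoscillation of $W_n$. Your additional care about the reciprocal being well-defined (using the nonoscillation of $\theta_n$) and the standard existence of one-sided limits for monotone functions fills in exactly the details the paper leaves implicit.
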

   Analogously, using that $\widetilde W_n$ is nonoscillatory it follows that $\big(\frac{\theta_n(x)}{\theta_0(x)}\big)^{\pm 1}$ is monotonic as $x \to a^+$ for $n \geq 1$. We need this fact in the proof of Lemma \ref{Lemma1}.
   \begin{corollary}\label{CorTheta2}
       The function $\big(\frac{\theta_n(x)}{\theta_0(x)}\big)^{\pm 1}$ is monotonic as $x \to a^+$ for $n \geq 1$.
   \end{corollary}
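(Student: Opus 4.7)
The plan is to mimic the argument the authors already gave for Corollary \ref{CorPhiTheta}, replacing $\varphi_0$ by $\theta_0$ throughout. The key computational identity is the quotient rule for solutions of the same Sturm--Liouville equation, namely
\begin{align*}
\bigg(\frac{\theta_n(x)}{\theta_0(x)}\bigg)' = -\frac{W(\theta_n,\theta_0)(x)}{p(x)\theta_0^2(x)} = -\frac{\widetilde W_n(x)}{p(x)\theta_0^2(x)},
\end{align*}
which is valid on any open subinterval of $(a,b)$ where $\theta_0$ has no zeros. Such an interval $(a,c)$ exists because $\theta_0$ is nonoscillatory at $a$ (being a classical solution of $\tau f = \lambda f$).

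From Lemma \ref{lemTheta_n}, the quasi-Wronskian $\widetilde W_n(x) = W(\theta_n(x),\theta_0(x))$ is nonoscillatory as $x\to a^+$ for every $n\geq 1$. Since $p(x) > 0$ a.e.~and $\theta_0^2(x) > 0$ in $(a,c)$, the right-hand side of the displayed identity is of definite sign in a sufficiently small right-neighborhood of $a$. This means the derivative of $\theta_n/\theta_0$ is of definite sign near $a$, so $x \mapsto \theta_n(x)/\theta_0(x)$ is monotonic on some interval $(a,c')$.

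For the reciprocal, I would observe that $\theta_n$ is also nonoscillatory as $x\to a^+$ by Lemma \ref{lemTheta_n}, so both $\theta_n$ and $\theta_0$ keep a fixed sign in a sufficiently small right-neighborhood of $a$; after possibly shrinking the interval once more, their ratio $\theta_n/\theta_0$ is of one fixed sign and therefore never vanishes. A monotonic function of constant nonzero sign has a monotonic reciprocal (directly from $(1/f)' = -f'/f^2$), so $\theta_0/\theta_n$ is also monotonic as $x\to a^+$. Together this yields the stated claim that $(\theta_n/\theta_0)^{\pm 1}$ is monotonic near $a$ for all $n\geq 1$.

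There is essentially no obstacle here, since the argument is a direct transcription of the one for Corollary \ref{CorPhiTheta} using $\widetilde W_n$ in place of $W_n$. The only mild care needed is to ensure that $\theta_0$ (and then also $\theta_n$) is nonzero in the relevant interval so that division and the reciprocal are legitimate, which is guaranteed by the nonoscillation statements of Lemma \ref{lemTheta_n}.
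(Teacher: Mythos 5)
Your proposal is correct and follows the same route the paper takes: the paper proves this corollary by exactly the remark that $\big(\tfrac{\theta_n}{\theta_0}\big)' = -\widetilde W_n/(p\,\theta_0^2)$ together with the nonoscillation of $\widetilde W_n$ established in Lemma \ref{lemTheta_n}, mirroring the argument already given for $W_n$ and $\theta_n/\varphi_0$. Your additional care about the reciprocal (fixed nonzero sign of the ratio near $a$, then $(1/f)' = -f'/f^2$) is exactly the implicit step the paper leaves to the reader.
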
  

\section{The regularization index}
   \label{Sect_RI}
   
   We now come to the main definition of the present paper. Observe that as $\lim_{x \to a^+} \tfrac{\varphi_0(x)}{\theta_0(x)} = 0$ by the principality of $\varphi$ and the nonprincipality of $\theta$, the regularization index $\ell_a$ is well-defined. 

   \begin{definition}[Regularization index]\label{DefRI}
       Assume Hypothesis \ref{HypoInt} and that Hypothesis \ref{Hypothesis} holds at $x=a$. Let $\varphi$ be given via \eqref{defPhi}, \eqref{phin} and take any entire nonprincipal solution $\theta(z,x)$ satisfying $W(\theta(z, \, \cdot \, ), \varphi(z, \, \cdot \,)) = 1$ which is real for $z \in \R$. Then we define the regularization index $\ell_a \in \mathbb{N}_0 \cup \lbrace \infty \rbrace$ of $\tau$ at the endpoint $x=a$ to be the smallest non-negative integer $\ell_a$ such that
       \begin{align*}
            \lim_{x \to a^+} \frac{\varphi_0(x)}{\theta_n(x)} &= 0 \ \text{ for all } \  n \in \lbrace 0, \dots, \ell_a \rbrace,
    \\
    \lim_{x \to a^+} \frac{\varphi_0(x)}{\theta_{\ell_a+1}(x)} &\not = 0,
       \end{align*}
       in case such an integer exists. Otherwise 
       \begin{align*}
    \lim_{x \to a^+} \frac{\varphi_0(x)}{\theta_n(x)} = 0 \ \text{ for all } \ n \geq 0,
\end{align*}
and we set $\ell_a = \infty$.
\end{definition}

The appropriateness of the terminology `regularization index' will become more apparent in Theorem \ref{Thm:Transformation} and Section \ref{SectWeylm}. For now, we note that, loosely speaking, this index (when $0<\ell_a\leq\infty$) allows one to quantify how far a limit point endpoint is away from being transformed to a limit circle endpoint via Darboux transforms. We will see in Theorem \ref{Thm:N=0} that $\ell_a = 0$ exactly corresponds to the limit circle case, which, if singular, can then be regularized in the sense of Niessen and Zettl (see \cite{NZ92} and \cite[Thm.~8.3.1]{Ze05}). This has implications for the spectral theory of self-adjoint realizations, which will be discussed in Sections \ref{SectSpec} and \ref{SectWeylm}.

It turns out that the regularization index at $a$ (resp., $b$) depends only on $\tau$ as explained
in the following remark.

\begin{remark}\label{remarkexamples}
$(i)$ Note that any other system 
 $\widetilde \theta$, $\widetilde \varphi$ satisfying the assumptions of Definition \ref{DefRI} is related to $\theta$, $\varphi$ via
\begin{align*}
    \widetilde \varphi(z,x) = c \varphi(z,x), \qquad \widetilde \theta(z,x) = c^{-1}\theta(z,x) + f(z) \varphi(z,x),
\end{align*}
where $c \in \R \setminus \lbrace 0 \rbrace$ and $f$ is an entire function that is real on the real axis. It is easy to see that the choice $\widetilde \theta$, $\widetilde \varphi$ lead to the same regularization index, so $\widetilde \ell_a = \ell_a$. The question whether the regularization index depends on the choice of $\lambda \in \R$ is the content of Corollary \ref{Cor:RI}. It turns out that $\ell_a$ is independent of $\lambda$, meaning that the regularization index at $a$ $($resp., $b$$)$ depends only on the Sturm--Liouville differential expression $\tau$.\\[1mm]
$(ii)$ Returning to Remark \ref{remarkcoefficients}, we provide a few general examples demonstrating how the regularization index depends on the behavior of $p, r, q$ near singular endpoints. Here and in the following the notation $f(x) \propto g(x)$ (for $x \to a^+$) will be shorthand for $\limsup_{x\to a^+} \big| \frac{f(x)}{g(x)} \big|^{\pm 1} \in \R_+$.

$(a)$ Let $a\in\R$. Assume $q\equiv 0$, $p(x)\propto (x-a)^\nu$, and $r(x)\propto (x-a)^\d$ with $\d,\nu\in\R$. By Remark \ref{remarkcoefficients} $(ii)$, we must restrict the powers to satisfy the integrability conditions given there. Independently of the integrability of $1/p$, this condition reads $2-\nu+\d>0$. If $\nu \geq 1$, that is $1/p \notin L^1((a,c); dx)$, and $2+\d>\nu\geq 1$, then $\varphi_0(x)$ is constant and $\theta_n(x)\propto (x-a)^{n(2-\nu+\d)+1-\nu}$ (times a possible logarithm) for $\nu\geq1$. Thus one concludes $\ell_a=\big\lfloor\frac{\nu-1}{2-\nu+\d}\big\rfloor$ in this case. If $\nu< 1$, that is $1/p \in L^1((a,c); dx)$, and $\nu-2<\d$, then $\varphi_0(x)\propto (x-a)^{1-\nu}$ and $\theta_n(x)\propto (x-a)^{n(2-\nu+\d)}$ (times a possible logarithm)  so that $\ell_a=\big\lfloor\frac{1-\nu}{2-\nu+\d}\big\rfloor$ in this case. Combining these two cases gives the regularization index as $\ell_a=\big\lfloor\frac{|\nu-1|}{2-\nu+\d}\big\rfloor$ in general. The logarithms show up if $n \geq \ell_a$ and either $\ell_a = \frac{\nu-1}{2-\nu+\d}$ for $\nu \geq 1$, or $\ell_a = \frac{1-\nu}{2-\nu+\d}$ for $\nu < 1$. Notice that the index is always finite and $\theta_n\in L^2((a,c);r(x)dx)$ for $n>(\ell_a-1)/2$.

$(b)$ Let $a\in\R$. If one assumes $p\equiv r\equiv 1$ and $\varphi_0(x) \propto (x-a)^\alpha$ with $\alpha\geq1/2$, then Hypothesis \ref{Hypothesis} holds by Remark \ref{remarkcoefficients} $(iii)$ (note $\alpha \geq 1/2$ is necessary as otherwise $\varphi_0^{-2}$ would be integrable near $a$ contradicting its principality). Furthermore, $x=a$ is limit circle for $\alpha\in[1/2,3/2)$ and $\theta_n(x)\propto (x-a)^{1-\alpha+2n}$ (times a possible logarithm) for $\alpha \geq1/2$ so that $\ell_a = \big\lfloor\alpha-\frac{1}{2}\big\rfloor$. Once again, the index is finite and $\theta_n\in L^2((a,c);dx)$ for $n>(\ell_a-1)/2$.
\end{remark}

The following lemma is one of the main structural results of the present paper and will enable us to interpret the regularization index as an extension of the binary limit circle/limit point classification.
\begin{lemma}\label{Lemma1}
Assume Hypothesis \ref{HypoInt} and that Hypothesis \ref{Hypothesis} holds at $x=a$. Then 
\begin{align}\label{ThetaRatio}
    \lim_{x \to a^+} \frac{\theta_{n+1}(x)}{\theta_{n}(x)} = 0 \ \text{ for all } \  n \in \lbrace 0, \dots, \ell_a \rbrace.
\end{align}
\begin{proof}
    We will use the representation \eqref{thetaNformula}. Let us assume that $n \leq \ell_a$.  First, observe that
    \begin{align*}
\frac{\theta_0(x)\int_a^x \varphi_0(t) \theta_{n}(t) r(t) dt}{\theta_n(x)}  &= \int_a^x \big(\varphi_0(t) \theta_{0}(t) r(t)\big)\frac{\frac{\theta_n(t)}{\theta_0(t)}}{\frac{\theta_n(x)}{\theta_0(x)}} dt
\\
&=  \int_a^x \big(\varphi_0(t) \theta_{0}(t) r(t)\big)\frac{F_n(t)}{F_n(x)} dt.
    \end{align*}
    Where we defined $F_n(s) = \frac{\theta_n(s)}{\theta_0(s)}$. We know from Corollary \ref{CorTheta} and Corollary \ref{CorTheta2} that $F_n(s) \to 0$ monotonically as $s \to a$. In particular, $\Big|\frac{F_n(t)}{F_n(x)}\Big| \leq 1$ for $t \in (a,x)$ and $x$ sufficiently close to $a$. Thus it follows that
    \begin{align*}
        \lim_{x \to a^+} \frac{\theta_0(x)\int_a^x \varphi_0(t) \theta_{n}(t) r(t) dt}{\theta_n(x)} = 0.
    \end{align*}
    Next, let us consider
    \begin{align*}
        \frac{\varphi_0(x) \int_x^c \theta_0(t) \theta_{n}(t) r(t) dt}{\theta_n(x)} &= \int_x^c \big(\varphi_0(t) \theta_{0}(t) r(t)\big)\frac{\frac{\varphi_0(x)}{\theta_n(x)}}{\frac{\varphi_0(t)}{\theta_n(t)}} dt
\\
&=  \int_x^c \big(\varphi_0(t) \theta_{0}(t) r(t)\big)\frac{G_n(x)}{G_n(t)} dt,
    \end{align*}
    where $G_n(s) = \frac{\varphi_0(s)}{\theta_n(s)}$. Now observe that as $n \leq \ell_a$, we have by Definition \ref{DefRI} that $\lim_{s\to a} G_n(s) = 0$, and this convergence is monotonic due to Corollary \ref{CorPhiTheta}. In particular, we can assume that $c$ is chosen close enough to $a$ such that $\Big|\frac{G_n(x)}{G_n(t)} \Big| \leq 1$ for $a < x \leq t < c$. An application of dominated convergence gives us
    \begin{align*}
        \lim_{x \to a^+} \frac{\varphi_0(x) \int_x^c \theta_0(t) \theta_{n}(t) r(t) dt}{\theta_n(x)} = 0.
    \end{align*}
    It now follows from \eqref{ThetaLimit} that necessarily $B_n = 0$. Moreover, as $n \leq \ell_a$ we have $\lim_{x\to a^+} \frac{A_n \varphi_0(x)}{\theta_n(x)} = 0$. Hence, \eqref{ThetaRatio} follows, finishing the proof. 
\end{proof}
\end{lemma}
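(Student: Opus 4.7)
The plan is to exploit the Green's-function-type representation \eqref{thetaNformula} for $\theta_{n+1}$ and show that each of the four summands, once divided by $\theta_n(x)$, tends to $0$ as $x\to a^+$, for every $n\in\{0,\dots,\ell_a\}$. Two of these summands are elementary: the coefficient of $\varphi_0(x)$, namely $A_{n+1}\varphi_0(x)/\theta_n(x)$, vanishes by the very definition of $\ell_a$ (since $n\le\ell_a$ gives $\varphi_0/\theta_n\to 0$), and the coefficient of $\theta_0(x)$, that is $B_{n+1}\theta_0(x)/\theta_n(x)$, forces $B_{n+1}=0$ once the two integral terms have been shown to be $o(\theta_n)$, using Corollary \ref{CorTheta} which says $\theta_m/\theta_0\to 0$ for $m\ge 1$; it is cleanest to first handle the two integrals and only then read off $B_{n+1}=0$.

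For the first integral $\theta_0(x)\int_a^x\varphi_0(t)\theta_n(t)r(t)\,dt$ I would divide by $\theta_n(x)$ and rewrite the quotient as
\begin{align*}
\int_a^x \varphi_0(t)\theta_0(t)r(t)\,\frac{F_n(t)}{F_n(x)}\,dt,\qquad F_n(s):=\frac{\theta_n(s)}{\theta_0(s)}.
\end{align*}
Corollary \ref{CorTheta2} (monotonicity of $\theta_n/\theta_0$) together with Corollary \ref{CorTheta} (its limit being $0$) yield $|F_n(t)/F_n(x)|\le 1$ for $a<t\le x$ near $a$, while Hypothesis \ref{Hypothesis} makes $\varphi_0\theta_0 r\in L^1$ near $a$, so dominated convergence gives the limit $0$.

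For the second integral $\varphi_0(x)\int_x^c\theta_0(t)\theta_n(t)r(t)\,dt$ the analogous rearrangement is
\begin{align*}
\int_x^c\varphi_0(t)\theta_0(t)r(t)\,\frac{G_n(x)}{G_n(t)}\,dt,\qquad G_n(s):=\frac{\varphi_0(s)}{\theta_n(s)},
\end{align*}
and here I would invoke Corollary \ref{CorPhiTheta} for monotonicity of $G_n$ together with the hypothesis $n\le\ell_a$, which guarantees $G_n(s)\to 0$; it follows that $|G_n(x)/G_n(t)|\le 1$ on $a<x\le t<c$ once $c$ is close enough to $a$, and dominated convergence again forces the quotient to $0$. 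Assembling the four vanishing contributions yields $\theta_{n+1}(x)/\theta_n(x)\to 0$, proving \eqref{ThetaRatio}.

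The delicate point I expect to handle carefully is the interplay between the monotonicity of the ratios $F_n,G_n$ and the range of $n$: the second integral bound genuinely needs $n\le\ell_a$ (otherwise $G_n$ need not tend to $0$ and the uniform bound $|G_n(x)/G_n(t)|\le 1$ fails), whereas the first bound works for every $n\ge 1$ thanks to Corollary \ref{CorTheta2}; verifying that the correct monotonicity is available throughout the range $n\in\{0,\dots,\ell_a\}$ — including the base case $n=0$, where $F_0\equiv 1$ and only the $G_0$-integral constrains the argument — is the main technical obstacle rather than any single estimate.
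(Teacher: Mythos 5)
Your argument is correct and follows essentially the same route as the paper's own proof: the same decomposition of $\theta_{n+1}$ via \eqref{thetaNformula}, the same rewriting of the two integral terms in terms of the ratios $F_n=\theta_n/\theta_0$ and $G_n=\varphi_0/\theta_n$, and the same appeal to Corollaries \ref{CorTheta}, \ref{CorTheta2}, and \ref{CorPhiTheta} together with dominated convergence, with $B_{n+1}=0$ read off at the end from \eqref{ThetaLimit}. Your explicit remark about the base case $n=0$ (where $F_0\equiv 1$ and only the trivial bound $|F_0(t)/F_0(x)|\le 1$ is needed) is a point the paper passes over silently, but it does not change the argument.
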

Note that we still have some freedom in choosing the $A_n$, which is expected as $\theta(z,x)$ is only unique up to additions of entire multiples of $\varphi(z,x)$. We will see in Section \ref{sectnatural} that further conditions need to be imposed on $\theta$ through the choice of $A_n$ to guarantee that \eqref{ThetaRatio} holds for all $n \geq 0$ (see Theorem \ref{Theorem2}).

We continue this section with the following result on the stability of the regularization index $\ell_a$ under perturbations of the potential $q$. The proof is modeled on Lemma 2.2 in \cite{KST_Inv}.
\begin{theorem}\label{ThmPer}
    Assume Hypothesis \ref{HypoInt} and that Hypothesis \ref{Hypothesis} holds at $x=a$. Choose $u_a$, $\np_a$ to be principal resp.~nonprincipal solutions of $\tau f = \lambda f$, $\lambda \in \R$ satisfying $W(\np_a, u_a) = 1$ and define the perturbed Sturm--Liouville differential expression 
    \begin{align*}
\tau^{per} = \dfrac{1}{r(x)}\Big[-\dfrac{d}{dx} p(x)  \dfrac{d}{dx} + q^{per}(x) \Big] \ \text{ for a.e.~$x\in(a,b) \subseteq \R$,}
    \end{align*}
    where the potential $q^{per} = q + q_0$ satisfies  $q_0 \in L^1_{loc}((a,b); dx)$ and
    \begin{align}\label{q0}
        \int_a^{b} |u_a(x) \np_a(x) q_0(x)| dx < \infty.
    \end{align}
    Then $\tau^{per}$ will satisfy Hypothesis \ref{Hypothesis} and the regularization indices of $\tau$ and $\tau^{per}$ at $x = a$ coincide, that is, $\ell_a^{per} = \ell_a$.
\end{theorem}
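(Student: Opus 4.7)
\emph{Proof proposal.} The plan is to construct the principal solution $\tilde u_a$ of $\tau^{\text{per}} f = \lambda f$ near $x=a$ via a Volterra integral equation, show it is asymptotically equivalent to $u_a$ in the sense $\tilde u_a(x)/u_a(x) \to 1$ as $x\to a^+$, and then propagate this asymptotic equivalence through the whole Born series of both the principal and nonprincipal fundamental solutions. Concretely, a solution of $\tau^{\text{per}} f = \lambda f$ near $a$ is equivalent to a solution of the Volterra equation
\begin{equation*}
\tilde u_a(x) = u_a(x) - \int_a^x \bigl[u_a(t)\np_a(x) - \np_a(t)u_a(x)\bigr] q_0(t)\tilde u_a(t)\, dt.
\end{equation*}
Writing $\tilde u_a/u_a = 1+h$ and using the bound $|u_a(t)\np_a(x)-\np_a(t)u_a(x)| < |\np_a(t)u_a(x)|$ from \eqref{NZIneq}, the Picard iteration converges with the estimate
\begin{equation*}
|\tilde u_a(x) - u_a(x)| \leq |u_a(x)|\Bigl(\exp\!\bigl[\tilde\rho(x)\bigr]-1\Bigr), \qquad \tilde\rho(x):=\int_a^x |u_a(t)\np_a(t)q_0(t)|\,dt,
\end{equation*}
which tends to $0$ as $x\to a^+$ thanks to \eqref{q0}. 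An analogous construction using a Volterra equation anchored at an interior point yields a nonprincipal solution $\tilde\np_a$ of $\tau^{\text{per}} f = \lambda f$ with $\tilde\np_a(x)/\np_a(x) \to 1$ as $x\to a^+$ and $W(\tilde\np_a,\tilde u_a)=1$.

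From these asymptotic equivalences one immediately obtains
\begin{equation*}
|\tilde u_a(x)\tilde\np_a(x) r(x)| = (1+o(1))|u_a(x)\np_a(x)r(x)|, \qquad x\to a^+,
\end{equation*}
so that $\tau^{\text{per}}$ satisfies Hypothesis~\ref{Hypothesis} at $x=a$ (nonoscillation is preserved because $\tilde u_a$ is a nonzero solution free of zeros near $a$). Hence the perturbed Born series $\tilde\varphi(z,x) = \sum_n \tilde\varphi_n(x)(z-\lambda)^n$ is constructed via \eqref{defPhi}, \eqref{phin} with $\tilde\varphi_0 = \tilde u_a$ and kernel built from $(\tilde u_a,\tilde\np_a)$, and Corollary~\ref{CorNorm} applied to $\tau^{\text{per}}$ furnishes an entire nonprincipal partner $\tilde\theta(z,x)$ with $W(\tilde\theta,\tilde\varphi)=1$.

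To conclude $\ell_a^{\text{per}} = \ell_a$ it suffices, by Definition~\ref{DefRI}, to prove $\tilde\varphi_0(x)/\varphi_0(x) \to 1$ and $\tilde\theta_n(x)/\theta_n(x) \to 1$ as $x \to a^+$ for every $n \geq 0$. The case $n=0$ is established above. For the inductive step I would choose a common $c\in(a,b)$ close enough to $a$ and use the explicit representation \eqref{thetaNformula} for $\tilde\theta_n$ built out of $\tilde u_a$, $\tilde\np_a$, $\tilde\theta_{n-1}$. The difference $\tilde\theta_n - \theta_n$ then splits into integrals of two kinds: those with kernel $u_a(t)\np_a(x) - \np_a(t)u_a(x)$ times $r(t)$ against the difference $\tilde\theta_{n-1}-\theta_{n-1}$ (which is $o(\theta_{n-1})$ by induction), and error terms of order $\tilde\rho(x)$ coming from $q_0$. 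The same dominated convergence arguments used in the proof of Lemma~\ref{Lemma1} show that each piece is $o(\theta_n(x))$ as $x\to a^+$, which yields $\tilde\theta_n/\theta_n \to 1$. Substituting these limits into the defining ratios of Definition~\ref{DefRI} gives $\ell_a^{\text{per}}=\ell_a$.

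The main obstacle will be the inductive control of $\tilde\theta_n - \theta_n$: the constants $A_n$ in \eqref{thetaNformula} are only fixed up to an entire multiple of $\varphi$, and the two operators may select different values. This must be handled by fixing a single admissible normalization (for instance via the naturally normalized system of Section~\ref{sectnatural}, once available) and checking that the perturbative errors from $q_0$ are small enough not to disturb the order of magnitude of $\theta_n$ at $a$. Once this bookkeeping is performed, the inductive machinery is essentially a refinement of the Picard estimate that powered the construction of $\tilde u_a$.
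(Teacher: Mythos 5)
Your proposal is correct and follows essentially the same route as the paper: your Volterra/Picard iteration is exactly the paper's Born series $u^{per}_{a,n}$ with the inductive bound $|u^{per}_{a,n}(x)|\le\sigma^n(x)|u_a(x)|$, $\sigma(x)=\int_a^x|u_a(t)\np_a(t)q_0(t)|\,dt$, yielding $u_a^{per}/u_a\to1$ as $x\to a^+$, after which the index is preserved because it depends only on the asymptotic behavior of the principal solution once $p,r$ are fixed. The ``main obstacle'' you flag (the constants $A_n$) is not actually one: by Remark~\ref{remarkexamples}\,$(i)$ the regularization index is independent of the admissible choice of $\theta$, so you only need the order of magnitude of $\theta_n^{per}$ at $a$, not the full limit $\tilde\theta_n/\theta_n\to1$ that your induction aims for.
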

\begin{proof}
We will use a similar Born series construction as for $\varphi(z,x)$. Let $u^{per}_{a,0}(x) = u_{a}(x)$ and define iteratively using the Green's function
    \begin{align*}
        u^{per}_{a,n}(x) =\int_a^x \big[ u_a(t)\np_a(x) - \np_a(t)u_a(x)\big] \big(-q_0(t)u^{per}_{a,n-1}(t) \big) dt,
        \\
        x\in(a,b), \quad n \geq 1. 
    \end{align*}
    Note that we then have $(\tau-\lambda) u_{a,n}^{per} = \frac{-q_0}{r} u_{a,n-1}^{per}$ for $n \geq 1$ and $(\tau - \lambda)u_{a,0}^{per} = 0$.
     Let $c \in (a,b)$ be chosen such that $u_a(x), \np_a(x)$ do not have any zeros for $x \in (a,c)$. Then just as in the proof of Lemma \ref{LemmaRho} we can show inductively that $|u_{a,n}^{per}(x)| \leq \sigma^n(x) |u_a(x)|$ for $x \in (a,c)$ and $\sigma(x) = \int_a^x |u_a(t) \np_a(t) q_0(t)| dt$. Note that  we have $\sigma(x) \to 0$ as $x \to a^+$. It follows that $u_a^{per}(x) = \sum_{n=0}^\infty u_{a, n}^{per}(x)$ converges for $x$ close enough to $a$, and $u_a^{per}(x) = u_a(x)[1+O(\sigma(x))]$. One can check that $\tau^{per} u_a^{per} = \lambda u_a^{per}$ and that $1/[p(u_a^{per})^2]$ is not integrable, meaning that $u_a^{per}$ is a principal solution for $\tau^{per}f = \lambda f$. As the regularization index $\ell_a^{per}$ only depends on the behavior of the principal (or nonprincipal) solution as $x \to a^+$ if the coefficients $p, r$ are fixed, it follows that $\ell_a^{per} = \ell_a$. 
\end{proof}
Provided Hypothesis \ref{Hypothesis} is satisfied, we can always chose $q_0 = \widetilde \lambda r$ and \eqref{q0} will hold. Note that this just corresponds to a spectral shift of $\lambda$. Thus we have shown
\begin{corollary} \label{Cor:RI}
    The regularization index $\ell_a$ is independent of the choice of $\lambda \in \R$ in \eqref{defPhi}.
\end{corollary}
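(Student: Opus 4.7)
The strategy is to deduce independence directly from the stability result of Theorem \ref{ThmPer}, exploiting the preamble's observation that $q_0 = \widetilde{\lambda}\, r$ is a perturbation corresponding to a pure spectral shift. Fix two candidate expansion points $\lambda_1, \lambda_2 \in \R$ in \eqref{defPhi}. Set $\widetilde{\lambda} := \lambda_2 - \lambda_1$ and $q_0 := \widetilde{\lambda}\, r$ (restricted, if necessary, to a half-neighborhood $(a,c)$ of $a$, which is harmless because the regularization index is determined solely by the behavior near $a$). The integrability \eqref{q0} then reduces to $|\widetilde{\lambda}| \int_a^c |u_a(x)\np_a(x) r(x)|\, dx < \infty$, which is precisely Hypothesis \ref{Hypothesis}. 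Theorem \ref{ThmPer} therefore applies, and the perturbed expression $\tau^{per} = \tau + \widetilde{\lambda}$ satisfies $\ell_a(\tau^{per}) = \ell_a(\tau)$, when both indices are computed relative to the same expansion point, which we take to be $\lambda_2$.

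The remaining step is to identify $\ell_a(\tau^{per})$ computed at $\lambda_2$ with $\ell_a(\tau)$ computed at $\lambda_1$. Since $\tau^{per} f = \lambda_2 f$ is literally the equation $\tau f = (\lambda_2 - \widetilde{\lambda}) f = \lambda_1 f$, every principal (resp.\ nonprincipal) solution of one is a principal (resp.\ nonprincipal) solution of the other, and the Wronskian normalization $W(\np_a, u_a) = 1$ is preserved. Moreover, the recursion $(\tau^{per} - \lambda_2)\varphi^{per}_n = \varphi^{per}_{n-1}$ defining the series \eqref{phin} for $\tau^{per}$ about $\lambda_2$ coincides term by term with the recursion $(\tau - \lambda_1)\varphi_n = \varphi_{n-1}$ for $\tau$ about $\lambda_1$, because $\tau^{per} - \lambda_2 = \tau - \lambda_1$. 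The same identification persists for the nonprincipal companions $\theta_n^{per}$ and $\theta_n$, up to the freedom of adding entire real multiples of $\varphi$, which is harmless for the regularization index by Remark \ref{remarkexamples}$(i)$.

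Combining these two identifications yields $\ell_a(\tau; \lambda_1) = \ell_a(\tau^{per}; \lambda_2) = \ell_a(\tau; \lambda_2)$, which is the claim. The only (mild) obstacle is the bookkeeping step linking the Born-series construction for $\tau^{per}$ about $\lambda_2$ with that for $\tau$ about $\lambda_1$; once this identification is in place, all of the analytic content --- convergence of the Born series, preservation of principality, and control of the $x \to a^+$ asymptotics --- has already been packaged into Theorem \ref{ThmPer} and its proof.
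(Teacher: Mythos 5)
Your proposal is correct and follows essentially the same route as the paper, which obtains the corollary as an immediate consequence of Theorem \ref{ThmPer} applied with $q_0=\widetilde\lambda\, r$, observing that this perturbation is a pure spectral shift. You merely spell out the bookkeeping (the identification $\tau^{per}-\lambda_2=\tau-\lambda_1$ and the locality of the index near $a$) that the paper leaves implicit.
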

\begin{remark}\label{RemarkPer}
The perturbation condition \eqref{q0} generalizes the condition in \cite[Hypo.~2.1]{KST_Inv} for perturbed spherical Schr\"odinger operators. In fact, the unperturbed case treated in \cite{KST_Inv} corresponds to 
\begin{align*}
    \tau = H_l = -\frac{d^2}{dx^2} + \frac{l(l+1)}{x^2}, \qquad x \in (0,1), \quad l \geq -1/2.
\end{align*}
Entire principal and nonprincipal solutions are given in terms of Bessel functions and satisfy
\begin{align*}
    \varphi(z,x) &\propto x^{l + 1}, \quad x \to 0^+,
    \\
    \theta(z,x) &\propto \begin{cases}
        x^{-l}, & l > -1/2
        \\
        x^{1/2} \ln(x), & l = -1/2
    \end{cases}, \quad \ x \to 0^+.
\end{align*}
Thus for $\varphi \theta q_0$ to be integrable we need to require $q_0 \in L^1_{loc}((0,1);dx)$ and
\begin{align*}
    \begin{cases}
        x q_0(x) \in L^1((0,1);dx), & l > -1/2,
        \\
        x\ln(x) q_0(x) \in L^1((0,1);dx), & l =-1/2. 
    \end{cases}
\end{align*}
This is equivalent to Hypothesis 2.1 stated in \cite{KST_Inv} and allows for the inclusion of the classical Coulomb case $q_0(x)=C/x$.
\end{remark}

\section{Relation to limit circle/limit point classification}\label{sectlclp}

We will now describe the relationship between the regularization index $\ell_a$ and the limit circle/limit point classification of $\tau$ at $a$.
\begin{theorem}\label{Thm:N=0}
    Assume Hypothesis \ref{HypoInt} and that Hypothesis \ref{Hypothesis} holds at $x=a$. Then $\tau$ is in the limit circle case at $x = a$ if and only if $\ell_a = 0$.
\end{theorem}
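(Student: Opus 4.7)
The equivalence of $\ell_a = 0$ with the limit circle case at $x=a$ comes down to analyzing the first correction term $\theta_1$ in the power series expansion of $\theta$, since $\ell_a = 0$ is, by Definition~\ref{DefRI}, equivalent to $\lim_{x\to a^+} \varphi_0(x)/\theta_1(x) \ne 0$, while the limit circle property at $a$ is equivalent to $\theta_0 \in L^2((a,c); r\,dx)$ (the principal solution $\varphi_0$ is automatically square-integrable near $a$ when $\theta_0$ is, since $\varphi_0 = o(\theta_0)$).

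The first step is to use the representation \eqref{thetaNformula} at $n=1$. The argument in the proof of Lemma~\ref{Lemma1} (applied at $n=0$, where its hypothesis is automatic) shows that $B_1 = 0$, giving
\begin{equation*}
\theta_1(x) = A_1 \varphi_0(x) + I_1(x) + I_2(x),
\end{equation*}
where $I_1(x) = \theta_0(x)\int_a^x \varphi_0 \theta_0 r\,dt$ and $I_2(x) = \varphi_0(x)\int_x^c \theta_0^2 r\,dt$. By Lemma~\ref{lemTheta_n}, $\varphi_0$ and $\theta_0$ are nonoscillatory near $a$, so after a harmless sign flip of $\theta_0$ we may assume $\varphi_0, \theta_0 > 0$ on some interval $(a,c]$; both $I_1(x)$ and $I_2(x)$ are then positive there. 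Writing $\varphi_0 \theta_0 = (\varphi_0/\theta_0)\theta_0^2$ and exploiting that $\varphi_0/\theta_0$ is monotonically increasing to $0$ as $x \to a^+$ (from Corollary~\ref{CorPhiTheta} applied to the ratio $\varphi_0/\theta_0$, together with the Wronskian identity $(\varphi_0/\theta_0)' = 1/(p\theta_0^2)>0$), I obtain the key pointwise estimate
\begin{equation*}
I_1(x) \;\le\; \theta_0(x)\,\frac{\varphi_0(x)}{\theta_0(x)}\int_a^x \theta_0^2(t) r(t)\,dt \;=\; \varphi_0(x)\int_a^x \theta_0^2(t) r(t)\,dt.
\end{equation*}

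For the implication \emph{limit circle $\Rightarrow \ell_a = 0$}, the hypothesis gives $\theta_0\in L^2((a,c); r\,dx)$, so $\int_a^x \theta_0^2 r\,dt \to 0$ as $x\to a^+$ and $\int_x^c \theta_0^2 r\,dt$ is bounded by a finite constant $C$. The key estimate yields $I_1(x) = o(\varphi_0(x))$, while the definition of $I_2$ immediately gives $I_2(x) = O(\varphi_0(x))$. Hence $\theta_1(x) = O(\varphi_0(x))$ near $a$, and by Corollary~\ref{CorPhiTheta} the limit $\lim_{x\to a^+} \varphi_0/\theta_1$ exists in $\R \cup \{\pm\infty\}$ and is nonzero; thus $\ell_a = 0$.

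For the converse \emph{$\ell_a = 0 \Rightarrow$ limit circle}, the assumption $\lim_{x\to a^+} \varphi_0/\theta_1 \ne 0$ (together with Corollary~\ref{CorPhiTheta}) means $|\theta_1(x)| = O(|\varphi_0(x)|)$ near $a$. Therefore $|I_1(x) + I_2(x)| \le |\theta_1(x)| + |A_1|\,|\varphi_0(x)| = O(|\varphi_0(x)|)$. Since $I_1$ and $I_2$ are both positive (same sign as $\varphi_0$), no cancellation occurs, so in particular $I_2(x) = \varphi_0(x)\int_x^c \theta_0^2 r\,dt \le M \varphi_0(x)$ for some $M$ and $x$ near $a$. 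Dividing by $\varphi_0(x)>0$ shows $\int_x^c \theta_0^2 r\,dt$ is bounded as $x\to a^+$, i.e.\ $\theta_0 \in L^2((a,c); r\,dx)$, which is the limit circle case.

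The only subtle point is ensuring $B_1 = 0$ in the representation of $\theta_1$ and handling the signs so that $I_1, I_2$ do not cancel in the converse direction; once these are in place, the argument reduces to the direct comparison laid out above.
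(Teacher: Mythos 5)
Your proof is correct and follows essentially the same route as the paper's: both arguments analyze $\theta_1$ via the representation \eqref{thetaNformula} (with $B_1=0$), use the pointwise bound $|\varphi_0(t)\theta_0(x)|\le|\theta_0(t)\varphi_0(x)|$ for $a<t<x$ (which you rederive from the monotonicity of $\varphi_0/\theta_0$ rather than citing \eqref{uuIneq}), and exploit the fact that the two integral terms have the same sign to rule out cancellation in the converse direction. The only cosmetic slip is the phrase ``monotonically increasing to $0$ as $x\to a^+$'' (the ratio increases in $x$ and tends to $0$ as $x\to a^+$); the inequality you extract from it is the correct one.
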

\begin{proof}
    First, let us assume that $\tau$ is in the limit circle case at $x = a$ (recall that Hypothesis \ref{Hypothesis} always holds in the limit circle case). We can then write (see \eqref{thetaNformula})
    \begin{align}\label{theta1}
        \theta_{1}(x) &= A_1 \varphi_0(x)  +\theta_0(x)\int_a^x \varphi_0(t) \theta_{0}(t) r(t) dt + \varphi_0(x) \int_x^c \theta_0^2(t) r(t) dt
        \\\nonumber
        &= A_1'\varphi_0(x) + \int_a^x \big[\theta_0(x)\varphi_0(t)-\varphi_0(x)\theta_0(t) \big]\theta_0(t) r(t) dt,
    \end{align}
    with $A_1' = A_1 + \int_a^c \theta_0^2(t) r(t) dt$, which exists due to the limit circle assumption. Using \eqref{uuIneq} with $\varphi_0 = u_a$ and $\theta_0 = \np_a$, we conclude that for $x$ close enough to $a$,
    \begin{align*}
        |\theta_0(x)\varphi_0(t)-\varphi_0(x)\theta_0(t)| < |\varphi_0(x)\theta_0(t)|, \qquad a < t < x
    \end{align*}
    holds. In particular for $x \to a^+$ we have 
    \begin{align*}
        \Bigg |\frac{\int_a^x \big[\theta_0(x)\varphi_0(t)-\varphi_0(x)\theta_0(t) \big]\theta_0(t) r(t) dt}{\varphi_0(x)} \Bigg| \leq \Big| \int_a^x \theta_0^2(t) r(t) dt \Big| \to 0,
    \end{align*}
    implying $\lim_{x \to a^+} \frac{\theta_1(x)}{\varphi_0(x)} = A_1'$, showing that indeed $\ell_a = 0$.

    Now assume that $\ell_a = 0$. We will again use the above integral representation of $\theta_1$, that is, the first line of \eqref{theta1}, with $c$ chosen such that $\varphi_0(t)$, $\theta_0(t)$ have no zeros on $(a,c)$. As $\ell_a = 0$, we have that the limit
    \begin{align}\label{LimitR}
        \lim_{x \to a^+} \frac{\theta_0(x)\int_a^x \varphi_0(t) \theta_{0}(t) r(t) dt + \varphi_0(x) \int_x^c \theta_0^2(t) r(t) dt}{\varphi_0(x)} \in \mathbb R,
    \end{align}
    exists. Note that due to our assumption on $c$, both terms in the numerator have the same sign (namely the sign of $\varphi_0$ close to $a$). In particular, this implies
    \begin{align*}
        \limsup_{x \to a^+}\Bigg|\frac{\varphi_0(x) \int_x^c \theta_0^2(t) r(t) dt}{\varphi_0(x)}\Bigg| = \limsup_{x \to a^+}\Big| \int_x^c \theta_0^2(t) r(t) dt \Big| < \infty,
    \end{align*}
    (due to monotonicity one can substitute $\lim$ for $\limsup$) which shows that $\tau$ is in the limit circle case at $x = a$.
\end{proof}

As a corollary, we have the following.

\begin{corollary}
    Assume Hypothesis \ref{HypoInt}, that Hypothesis \ref{Hypothesis} holds at $x=a$ and theta $\theta$ satisfies \eqref{ThetaWronski}. Then $\tau$ is in the limit point case at $x=a$ if and only if
    \begin{align*}
        \lim_{x\to a^+} \frac{\varphi_0(x)}{\partial_z\theta(z,x)|_{z=0}} = c\in\R.
    \end{align*}
\end{corollary}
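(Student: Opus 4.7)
The plan is to identify $\partial_z\theta(z,x)|_{z=0}$ with the coefficient $\theta_1(x)$ in the Taylor expansion $\theta(z,x) = \sum_{n \geq 0} \theta_n(x) z^n$. By Corollary~\ref{CorHyp} we may take $\lambda = 0$ in the construction of $\varphi$ and $\theta$, so this identification is unambiguous, and the limit in question becomes $\lim_{x \to a^+} \varphi_0(x)/\theta_1(x)$, which by Corollary~\ref{CorPhiTheta} is guaranteed to exist in $\R \cup \{\pm\infty\}$. The task therefore reduces to showing that this limit is finite (i.e., lies in $\R$) exactly when $\ell_a \geq 1$.

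For the forward direction, suppose $\tau$ is in the limit point case at $x=a$. Theorem~\ref{Thm:N=0} gives $\ell_a \geq 1$, so the defining property of the regularization index in Definition~\ref{DefRI} applied with $n=1$ yields immediately that $\lim_{x \to a^+} \varphi_0(x)/\theta_1(x) = 0 \in \R$. Hence the limit equals the real number $c=0$.

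For the reverse direction I would proceed by contrapositive: assume $\tau$ is in the limit circle case at $x=a$, so $\ell_a = 0$ by Theorem~\ref{Thm:N=0}, and aim to prove $\lim_{x \to a^+} \varphi_0(x)/\theta_1(x) = \pm\infty$. Starting from the representation \eqref{thetaNformula} with $n=1$ and rearranging as in the second line of \eqref{theta1} gives
\[
\theta_1(x) = A_1' \varphi_0(x) + \int_a^x \bigl[\theta_0(x)\varphi_0(t) - \varphi_0(x)\theta_0(t)\bigr]\theta_0(t) r(t)\, dt,
\]
with $A_1' = A_1 + \int_a^c \theta_0^2(t) r(t)\, dt$ a finite real number since $\theta_0^2 r$ is integrable near $a$ in the limit circle case. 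The Niessen--Zettl inequality \eqref{uuIneq} then shows that the remainder integral is $o(\varphi_0(x))$ as $x \to a^+$ (mirroring the corresponding estimate in the proof of Theorem~\ref{Thm:N=0}), so $\theta_1(x)/\varphi_0(x) \to A_1'$. The normalization condition \eqref{ThetaWronski} is used to pin the free constant $A_1$ such that $A_1' = 0$; combined with the monotonicity supplied by Corollary~\ref{CorPhiTheta}, this forces $\varphi_0(x)/\theta_1(x) \to \pm\infty \notin \R$. The main obstacle is precisely this pinning step: one must unfold the two-parameter Wronskian limit $\lim_{x\to a^+} W(\theta(z_2, x), \varphi(z_1, x)) = 1$ into Taylor-coefficient conditions that remove the residual freedom $\theta \mapsto \theta + f(z)\varphi$ enough to guarantee $A_1' = 0$ in the limit circle regime, which will be the most delicate part of the argument.
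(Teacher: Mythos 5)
Your forward direction is precisely the paper's (unwritten) argument: the corollary is stated as an immediate consequence of Theorem \ref{Thm:N=0}, since with $\lambda=0$ (legitimate by Corollary \ref{CorNorm}/\ref{CorHyp}) one has $\partial_z\theta(z,x)|_{z=0}=\theta_1(x)$, and the condition characterizing $\ell_a\geq 1$ in Definition \ref{DefRI} is exactly $\lim_{x\to a^+}\varphi_0(x)/\theta_1(x)=0$. That half of your write-up is correct and complete.

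The reverse direction is where the genuine gap lies, and the step you flag as ``the most delicate part'' cannot in fact be carried out under the stated hypotheses. By Corollary \ref{CorThetaNorm}, \emph{every} entire $\theta$ with $W(\theta(z,\cdot),\varphi(z,\cdot))=1$ satisfies \eqref{ThetaWronski}, so \eqref{ThetaWronski} carries no information whatsoever about the free constant $A_1$. Concretely, replacing $\theta(z,x)$ by $\theta(z,x)+z\,\varphi(z,x)$ preserves the unit Wronskian and \eqref{ThetaWronski} but replaces $\theta_1$ by $\theta_1+\varphi_0$, hence shifts $A_1'=\lim_{x\to a^+}\theta_1(x)/\varphi_0(x)$ by $1$; in the limit circle case one can therefore arrange $A_1'\neq 0$, making $\lim_{x\to a^+}\varphi_0(x)/\theta_1(x)=1/A_1'$ a nonzero real number even though $\tau$ is limit circle. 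So the contrapositive you are aiming at is false for such admissible $\theta$, and no amount of ``unfolding'' the Wronskian limits will rescue it. The equivalence is sound only if the displayed limit is read as being equal to $0$ --- in which case the reverse direction is immediate and needs none of the $A_1'$ analysis: $\lim\varphi_0/\theta_1=0$ rules out $\ell_a=0$ by Definition \ref{DefRI}, so $\ell_a\geq 1$ and Theorem \ref{Thm:N=0} gives limit point --- or if one additionally imposes the natural normalization of Definition \ref{DefThetaNorm}, whose condition $(iii)$ with $n=1>\ell_a=0$ is exactly the requirement $A_1'=0$ that you were hoping to extract from \eqref{ThetaWronski}. You should either prove the statement in the ``$=0$'' form or strengthen the hypothesis on $\theta$; as it stands, your plan for the reverse implication does not close.
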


This brings up the following general open problem which can now be understood as an extension of studying stability of the limit circle/limit point classification:

\begin{problem}
Under which types of perturbation of the coefficient functions is a finite regularization index of $\tau$ at $x=a$ stable?
\end{problem}
Note that Theorem \ref{ThmPer} gives one class of perturbations under which the regularization index remains stable.

For general $\ell_a < \infty$ we also have the following characterization.

\begin{proposition}
    Assume Hypothesis \ref{HypoInt}, that Hypothesis \ref{Hypothesis} holds at $x=a$,  and $\ell_a < \infty$. Then \begin{align}\nonumber
    \theta_0\theta_n &\not \in L^1((a,c); r(x)dx), \qquad n \in \lbrace 0, \dots, \ell_a -1 \rbrace,
    \\\label{IntThetaNa}
    \theta_0\theta_{\ell_a} & \in L^1((a,c); r(x)dx).
\end{align} 
In case $\ell_a = \infty$, we have $ \theta_0\theta_n \not \in L^1((a,c); r(x)dx)$ for all $n \geq 0$.
\end{proposition}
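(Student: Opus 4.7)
My plan is to convert absolute integrability of $\theta_0\theta_n\,r$ near $a$ into a statement about the limit at $a^+$ of the Wronskian $\widetilde W_{n+1}(x)=W(\theta_{n+1}(x),\theta_0(x))$, and then tie that boundary value directly to the ratio $\varphi_0/\theta_{n+1}$ used in Definition~\ref{DefRI}.

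First I would invoke the identity $\widetilde W_{n+1}'(x)=\theta_n(x)\theta_0(x)r(x)$ already derived in the proof of Lemma~\ref{lemTheta_n}. Since $\theta_0$ and $\theta_n$ are both nonoscillatory at $a$ (Lemma~\ref{lemTheta_n}) and $r>0$, the integrand has a constant sign on some left-neighbourhood of $a$, so
\begin{align*}
\theta_0\theta_n\in L^1((a,c);r(x)dx) \iff \lim_{x\to a^+}\widetilde W_{n+1}(x)\in\mathbb R,
\end{align*}
while monotonicity (Lemma~\ref{lemTheta_n}) guarantees that this limit exists unconditionally in $\mathbb R\cup\{\pm\infty\}$.

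Next I would relate $\widetilde W_{n+1}(a^+)$ to $\theta_{n+1}/\varphi_0$ via the straightforward identities
\begin{align*}
\Bigl(\tfrac{\theta_{n+1}}{\theta_0}\Bigr)'(x)=-\frac{\widetilde W_{n+1}(x)}{p(x)\theta_0^2(x)}, \qquad \Bigl(\tfrac{\varphi_0}{\theta_0}\Bigr)'(x)=\frac{1}{p(x)\theta_0^2(x)},
\end{align*}
combined with $\lim_{x\to a^+}\theta_{n+1}(x)/\theta_0(x)=0$ from Corollary~\ref{CorTheta} and $\lim_{x\to a^+}\varphi_0(x)/\theta_0(x)=0$ from the principality of $\varphi_0$. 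Because $1/(p\theta_0^2)$ is strictly positive near $a$ and $\widetilde W_{n+1}$ has a limit in the extended reals, L'Hospital's rule applies and gives
\begin{align*}
\lim_{x\to a^+}\frac{\theta_{n+1}(x)}{\varphi_0(x)}=-\lim_{x\to a^+}\widetilde W_{n+1}(x).
\end{align*}

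Finally, using the monotonicity of $(\varphi_0/\theta_{n+1})^{\pm1}$ from Corollary~\ref{CorPhiTheta}, the right-hand side being finite (whether nonzero or zero) translates exactly to $\lim_{x\to a^+}\varphi_0(x)/\theta_{n+1}(x)\neq 0$ in the extended sense, while the right-hand side being $\pm\infty$ translates to $\varphi_0/\theta_{n+1}\to 0$. Combining all three steps, $\theta_0\theta_n\in L^1((a,c);r(x)dx)$ is equivalent to $\varphi_0/\theta_{n+1}\not\to 0$ at $a^+$. Definition~\ref{DefRI} says precisely that $\varphi_0/\theta_m\to 0$ for $m\le\ell_a$ and $\varphi_0/\theta_{\ell_a+1}\not\to 0$; setting $m=n+1$ yields the claimed dichotomy for $n\in\{0,\dots,\ell_a-1\}$ versus $n=\ell_a$, and the case $\ell_a=\infty$ follows verbatim since then $\varphi_0/\theta_m\to 0$ for every $m\ge 0$. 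The only real technical point is justifying the extended-real version of L'Hospital, which is routine given the sign-definiteness of $p\theta_0^2$ and the monotonicity of $\widetilde W_{n+1}$ near $a$.
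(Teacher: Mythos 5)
Your argument is correct, and it takes a genuinely different route from the paper's. The paper proves the two assertions separately: for $\theta_0\theta_{\ell_a}\in L^1$ it starts from the representation \eqref{thetaNformula}, observes that the two integral terms have the same sign near $a$, and extracts boundedness of $\int_x^c\theta_0\theta_{\ell_a}r\,dt$ from the existence of the limit \eqref{supThetan}; for the converse it builds the auxiliary function $\Theta_{n+1}$ from the Green's kernel, bounds it via \eqref{NZIneq}, and concludes $\varphi_0/\theta_{n+1}\not\to 0$, forcing $n\geq\ell_a$. You instead establish a single two-way equivalence
\begin{align*}
\theta_0\theta_n\in L^1((a,c);r(x)dx)\iff\lim_{x\to a^+}W(\theta_{n+1},\theta_0)(x)\in\R\iff\lim_{x\to a^+}\frac{\varphi_0(x)}{\theta_{n+1}(x)}\neq 0,
\end{align*}
using $\partial_x\widetilde W_{n+1}=\theta_n\theta_0 r$ together with the L'H\^opital computation $\lim\theta_{n+1}/\varphi_0=-\lim\widetilde W_{n+1}$ (valid in the $0/0$ form since $(\varphi_0/\theta_0)'=1/(p\theta_0^2)>0$ and $\widetilde W_{n+1}$ is monotone near $a$), and then read off both directions at once from Definition~\ref{DefRI}. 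All the ingredients you cite are available: the sign-definiteness and monotonicity come from Lemma~\ref{lemTheta_n}, the $0/0$ hypotheses from Corollary~\ref{CorTheta} and the principality of $\varphi_0$, and the existence of $\lim\varphi_0/\theta_{n+1}$ in the extended reals from Corollary~\ref{CorPhiTheta}; the independence of the $L^1$ statement from the free constants $A_{n+1}$ is also consistent, since shifting $\theta_{n+1}$ by $A\varphi_0$ shifts $\widetilde W_{n+1}$ by the constant $-A$. Your version is arguably more unified and yields the sharper statement that the $L^1$ dichotomy is exactly equivalent to the defining limits of $\ell_a$ at every index $n$, in the same L'H\^opital spirit the paper uses in Theorem~\ref{TFAE} and Section~\ref{SectDarb}; the paper's version stays closer to elementary integral estimates and avoids L'H\^opital entirely. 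The only slip is cosmetic: you mean a right-neighbourhood $(a,a+\delta)$ of the left endpoint $a$, not a left-neighbourhood.
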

\begin{proof}
    The proof is a simple adaptation of the second part of the proof of Theorem \ref{Thm:N=0}. Let us first show that $\theta_0\theta_{\ell_a}  \in L^1((a,c); r(x)dx)$. By definition we have that (cf.~\eqref{LimitR})
    \begin{align}\label{supThetan}
        \lim_{x \to a^+} \frac{\theta_0(x)\int_a^x \varphi_0(t) \theta_{\ell_a}(t) r(t) dt + \varphi_0(x) \int_x^c \theta_0(t)\theta_{\ell_a}(t) r(t) dt}{\varphi_0(x)} \in \mathbb R.
    \end{align}
    As $\theta_{n}$ is nonoscillatory for $x \to a^+$ by Lemma~\ref{lemTheta_n}, we can assume that $c$ is chosen such that $\varphi_0$, $\theta_0$, and $\theta_{\ell_a}$ have no zeros on $(a,c)$.  Thus, both terms in the numerator of \eqref{supThetan} have the same sign, so we can conclude that
    \begin{align*}
        \limsup_{x \to a^+}\Bigg|\frac{\varphi_0(x) \int_x^c \theta_0(t)\theta_{\ell_a}(t) r(t) dt}{\varphi_0(x)}\Bigg| = \limsup_{x \to a^+}\Big| \int_x^c \theta_0(t)\theta_{\ell_a}(t) r(t) dt \Big| < \infty.
    \end{align*}
    Again, as $\theta_n$ is nonoscillatory at $x = a$, it follows that \begin{align*}
        \limsup_{x \to a^+}\int_x^c |\theta_0(t)\theta_{\ell_a}(t)| r(t) dt  < \infty,
    \end{align*} 
    showing that  $\theta_0\theta_{\ell_a}  \in L^1((a,c); r(x)dx)$.

    Now assume that  $\theta_0\theta_{n}  \in L^1((a,c); r(x)dx)$. Then we can define similarly to \eqref{phin}
    \begin{align*}
         \Theta_{n+1}(x) = \int_a^x [\varphi_0(t)\theta_0(x)-\theta_0(t)\varphi_0(x)]\theta_n(t)r(t)dt, \qquad x \in (a,b),
    \end{align*}
    as the integral converges. Note that $\Theta_{n+1}$ differs from $\theta_{n+1}$ by at most a multiple of $\varphi_0$. From \eqref{NZIneq} we thus have for $x$ close enough to $a$ that
    \begin{align*}
        |\Theta_{n+1}(x)| \leq |\varphi_0(x)| \int_a^x |\theta_0(t) \theta_n(t) r(t)| dt, 
    \end{align*}
    implying that $\lim_{x\to a^+}\frac{\Theta_{n+1}(x)}{\varphi_0(x)} = 0$. Hence we must have $n \geq \ell_a$. This argument also shows the statement regarding $\ell_a = \infty$, thus finishing the proof.
\end{proof}
Note that the above proposition implies that $\theta_{\ell_a} \in L^2((a,c); r(x)dx)$ due to \eqref{ThetaLimit}. However, it can happen that $\theta_n \in  L^2((a,c); r(x)dx)$ for $n < \ell_a<\infty$. For example, the generalized Bessel operator in Section \ref{subBessel} and the Jacobi equation in Section \ref{subJac} both show that this is satisfied for roughly half of $n<\ell_a$. This leads to the following open problem:

\begin{problem}\label{problemTheta}
If $\ell_a<\infty$, can one characterize for what $n<\ell_a$ one has $\theta_n(x) \in  L^2((a,c); r(x)dx)$?
\end{problem} 

Open Problem \ref{problemTheta} is closely related to the question of equivalence between $\ell_a$ and the index $\Delta$ studied \cite{LW23}, as explained in the following remark.
\begin{remark}\label{RemarkDelta}
In \cite{LW23} an index $\Delta$ related to canonical systems is considered, which is defined through an $L^2$-condition. In the Schr\"odinger case $p \equiv r \equiv 1$ the is index is defined through the following procedure (see \cite[Eq.~(9.3)]{LW23}). Choose a principal solution $\varphi_0$ of $\tau f = 0$ and define recursively the following sequence:
\begin{align*}
    \widetilde w_0(x) &= \frac{1}{\varphi_0(x)},
    \\
    \widetilde w_k(x) &= \begin{dcases}
        \varphi_0(x) \int_x^c\frac{1}{\varphi_0(t)} \widetilde w_{k-1}(t) dt, \qquad \text{if } k \text{ is odd},
        \\
        \\
        \frac{1}{\varphi_0(x)}\int_a^x \varphi_0(t)\widetilde w_{k-1}(t) dt, \qquad \text{if } k \text{ is even}.
    \end{dcases}
\end{align*}
It turns out that $\tau \widetilde w_{k+2} =  \widetilde w_k$ for all odd $k$. In fact, the above recursion is equivalent the recursion \eqref{thetaNformula} with $B_n = 0$ written in two steps, and $\widetilde w_1$ is a nonprincipal solution of $\tau f = 0$. This implies that one can identify $\theta_n = \widetilde w_{2n+1}$.

The authors then define the index $\Delta_{Schr}$ to be the smallest $k$ such that $\widetilde w_k \in L^2((a,c);dx)$. Thus, this index is closely related to local $L^2$-integrability of the $\theta_n$. A similar condition is used in the case $q \equiv 0$ and either $1/p$ or $r$ not in $L^1((a,c);dx)$, leading to the indices $\Delta_{SL}$, respectively, $\Delta_{SL}^+$. Comparing \cite[Prop.~7.18, Ex.~9.5]{LW23} to our Remark \ref{remarkexamples} it would appear that in these cases one would have $\ell_a + 1 = \Delta_{SL}$, $\ell_a = \Delta^+_{SL}$ and $\ell_a + 1 = \Delta_{Schr}$. 
\end{remark}

Based on these considerations, we expect that the correct answer to Open Problem \ref{problemTheta} would be $\theta_n \in L^2((a,c); r(x)dx)$ if and only if $n \geq \kappa = \floor{\frac{\ell_a+1}{2}}$ (cf.~Cor.~\ref{CorKappa}). This can be directly checked in the setting of Remark \ref{remarkexamples} $(ii)$ where $q \equiv  0$ and $p, r$ have power like behavior at $x = a$.

\section{natural normalization}
\label{sectnatural}

Note that so far there is plenty of freedom in choosing the nonprincipal solution $\theta$. In fact, $\widetilde \theta(z,x) = \theta(z,x) + f(z)\varphi(z,x)$, with $f$ being entire and real-valued on $\mathbb R$, will still satisfy all the results from the previous section. We now introduce another stronger normalization requirement by additionally including  condition $(iii)$ below, which narrows down the class of admissible $\theta$'s. It turns out that this condition is very convenient in the study of Darboux transforms in Section \ref{SectDarb} and guarantees that the corresponding Weyl $m$-function lies in a suitable generalized Nevanlinna class $N_\kappa^\infty$ in Section \ref{SectWeylm}. 

\begin{definition}[Naturally normalized system]\label{DefThetaNorm}
    Assume Hypothesis \ref{HypoInt} and that Hypothesis \ref{Hypothesis} holds at $x=a$. We call the system of entire solutions $\varphi$, $\theta$ of $\tau f = zf$ \emph{naturally normalized} $($at $x=a$$)$ if and only if
    \begin{enumerate}
        \item $\varphi(z, \, \cdot \,)$ is principal at $x = a$ for all $z \in \mathbb \R$ and $\lim_{x\to a^+}\dfrac{\varphi(z_1,x)}{\varphi(z_2,x)} = 1$ for $z_1, z_2 \in \R;$
        \\
        \item $W(\theta(z, \, \cdot \,), \varphi(z,\, \cdot \,)) = 1$ for all $z \in \R;$
        \\
        \item $\lim_{x\to a^+}\dfrac{\theta_n(x)}{\varphi_0(x)} = 0$ for all $n > \ell_a$.
    \end{enumerate}
    As $\varphi$ is fixed up to multiplicative constants by $(i)$, we will sometimes also say that $\theta$ is \emph{naturally normalized} if it satisfies $(ii)$ and $(iii)$, where $(i)$ is implicitly assumed.
\end{definition}
    Note that by Corollary \ref{CorThetaNorm}, if $\varphi$, $\theta$ are naturally normalized then we must also have $\lim_{x\to a^+}\frac{\theta(z_1,x)}{\theta(z_2,x)} = 1$ for $z_1, z_2 \in \R$. Also, if $\ell_a = \infty$ condition $(iii)$ is vacuous. A more complete description of naturally normalized systems $\varphi$, $\theta$ is given in Theorem~\ref{Theorem2}. Moreover, in the special case of the perturbed spherical Schr\"odinger operator, condition $(iii)$ defines a `Frobenius solution' in the sense of \cite[Def.~3.10]{KT_JDE}. 

\begin{remark}
We remark that it would be of interest to directly compare our notion of a naturally normalized system of solutions to those used in \cite{LW23}. In particular, it is shown there that in the presence of a finite index every solution of the Sturm--Liouville problem (more generally, canonical systems) attains regularized boundary values in the sense that finitely many divergent terms are discarded in defining the values (see \cite[Thms. 4.2, 7.4, and 9.6]{LW23}). The regularized boundary values are then used to fix a fundamental system of solutions in order to construct a singular Weyl $m$-function, much as we do in Section \ref{SectWeylm} via a naturally normalized system.
\end{remark}

    It will follow from the proof of Lemma \ref{LemThetaBound} that given $\varphi$ defined via \eqref{defPhi}, \eqref{phin} (which is unique up to multiplicative constants), the natural normalization condition is equivalent to the recursion
\begin{align}\label{normTheta}
        \theta_{n}(x) = \int_a^x \big[\theta_0(x)\varphi_0(t)-\varphi_0(x)\theta_0(t) \big]\theta_{n-1}(t) r(t) dt \ \text{ for all} \ n > \ell_a.
    \end{align}
    In fact, \eqref{normTheta} can also be used to simply define $\theta_n$ for all $n > \ell_a$ from some initial choice of $\theta_k$ with $k = 0, \dots, \ell_a$ coming from the recursion \eqref{thetaNformula}, implying that at least one naturally normalized $\theta$ always exists. For uniqueness, see Remark \ref{RemThetaUnique}.

\begin{lemma}\label{LemThetaBound}
    Assume Hypothesis \ref{HypoInt}, that Hypothesis \ref{Hypothesis} holds at $x=a$,  and let $\theta_n$ for $n > \ell_a$ be defined inductively by \eqref{normTheta}. Then there exists $c \in (a,b)$ such that $\varphi_0(x)$, $\theta_0(x)$ have no zeros for $x\in (a,c]$ and 
\begin{align}\label{thetanbound}
    |\theta_n(x)| \leq \rho^{n-\ell_a-1}(x)|\varphi_0(x)| \ \text{ for } x \in (a,c) \ \text{ and } \ n > \ell_a
\end{align}
holds, where $\rho$ is defined in \eqref{DefRho}. In particular, all integrals \eqref{normTheta} exist.
\end{lemma}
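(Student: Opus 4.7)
The plan is to prove \eqref{thetanbound} by induction on $n$, using the recursion \eqref{normTheta} together with the sharp Niessen--Zettl inequality \eqref{uuIneq} (which is available precisely because $\varphi_0 = u_a$ and $\theta_0 = \np_a$ are sign-definite on $(a,c)$). The existence of the integrals in \eqref{normTheta} will be obtained simultaneously as a by-product of the bound.

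The preliminary step is to fix $c \in (a,b)$ small enough that (a) both $\varphi_0$ and $\theta_0$ are zero-free on $(a,c]$, which is possible since each is nonoscillatory at $a$ by Lemma \ref{lemTheta_n}, and (b) $\int_a^{c} |\theta_0(t)\theta_{\ell_a}(t)| r(t)\,dt < 1$, which is possible since the preceding proposition gives $\theta_0 \theta_{\ell_a} \in L^1((a,c); r(x)dx)$, so the $(a,c)$-integral can be made arbitrarily small by shrinking $c$.

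For the base case $n = \ell_a + 1$, I invoke \eqref{uuIneq} with $u_a = \varphi_0$, $\np_a = \theta_0$ to get $|\theta_0(x)\varphi_0(t) - \varphi_0(x)\theta_0(t)| < |\theta_0(t)\varphi_0(x)|$ for $a < t < x < c$. Together with the integrability in (b), this both justifies the integral in \eqref{normTheta} and yields
\[
    |\theta_{\ell_a+1}(x)| \leq |\varphi_0(x)| \int_a^x |\theta_0(t)\theta_{\ell_a}(t)| r(t)\,dt \leq |\varphi_0(x)| = \rho^{0}(x)|\varphi_0(x)|,
\]
which is precisely \eqref{thetanbound} at $n = \ell_a + 1$. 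For the inductive step, assuming the estimate at level $n-1$, I apply \eqref{uuIneq} once more and use the monotonicity of $\rho$ (which is nondecreasing on $(a,c)$ by its definition \eqref{DefRho}) to estimate
\begin{align*}
    |\theta_n(x)| &\leq |\varphi_0(x)| \int_a^x |\theta_0(t)| \cdot \rho^{n-\ell_a-2}(t) |\varphi_0(t)| r(t)\,dt \\
    &\leq |\varphi_0(x)| \rho^{n-\ell_a-2}(x) \int_a^x |\theta_0(t)\varphi_0(t)| r(t)\,dt \\
    &= \rho^{n-\ell_a-1}(x) |\varphi_0(x)|,
\end{align*}
which completes the induction. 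At each step, the induction hypothesis combined with Hypothesis \ref{Hypothesis} shows the integrand in \eqref{normTheta} is in $L^1((a,x); r(x)dx)$, so the recursive definition is unambiguous.

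The only nontrivial point is the base case: verifying that the leading constant is actually controllable. This relies on the previously established fact $\theta_0\theta_{\ell_a} \in L^1((a,c);r(x)dx)$, which itself is the content of the proposition before Open Problem \ref{problemTheta}. Once that $L^1$-integrability is in hand, shrinking $c$ produces the multiplicative constant $1$ needed to match $\rho^{0}(x) = 1$, and the induction then propagates cleanly thanks to the built-in factor of $\rho$ arising naturally from integrating against $|\theta_0\varphi_0| r$.
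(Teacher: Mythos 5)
Your proof is correct and follows essentially the same route as the paper: the base case via the $L^1$-integrability of $\theta_0\theta_{\ell_a}$ from the preceding proposition combined with shrinking $c$ so the integral is at most $1$, and the inductive step via the Niessen--Zettl inequality \eqref{uuIneq} together with the monotonicity of $\rho$. No gaps.
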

\begin{proof}
    The proof is similar to the proof of Lemma~\ref{LemmaRho}. We will show the statement via induction. For $n = \ell_a + 1$ and choosing $c$ close enough to $a$ such that\eqref{NZIneq} holds we obtain
    \begin{align}\nonumber
        |\theta_{\ell_a+1}(x)| &= \Big|\int_a^x \big[\theta_0(x)\varphi_0(t)-\varphi_0(x)\theta_0(t) \big]\theta_{\ell_a}(t) r(t) dt\Big|
        \\\label{ThetaN+1Est}
        &\leq \int_a^x |\varphi_0(x)\theta_0(t) \theta_{\ell_a}(t) r(t) |dt = |\varphi_0(x)| \underbrace{\int_a^x |\theta_0(t) \theta_{\ell_a}(t) r(t) |dt}_{\to 0 \text{ by } \eqref{IntThetaNa}}.
    \end{align}
    If necessary, redefining $c$ such that the last integral above is $\leq 1$ it follows that $|\theta_{\ell_a+1}(x)| \leq |\varphi_0(x)|$ for $x\in (a,c]$, showing \eqref{thetanbound} for $n = \ell_a+1$. Now using induction and monotonicity of $\rho$, we obtain for $n > \ell_a + 1$
    \begin{align*}
        |\theta_{n}(x)| &= \Big|\int_a^x \big[\theta_0(x)\varphi_0(t)-\varphi_0(x)\theta_0(t) \big]\theta_{n-1}(t) r(t) dt\Big|
        \\
        &\leq \int_a^x |\varphi_0(x)\theta_0(t) \theta_{n-1}(t) r(t) |dt
        \\
         &\leq \int_a^x |\theta_0(t) \varphi_0(t)  r(t) |dt \, \rho^{n-\ell_a-2}(x)|\varphi_0(x)|
         \\
         &=  \rho^{n-\ell_a-1}(x)|\varphi_0(x)|,
    \end{align*}
    finishing the proof.
\end{proof}
One can now proceed as in Proposition~\ref{PropPhiEntire} to show that $\theta(z,x) = \sum_{n=0}^\infty \theta_n(x)(z-\lambda)^n$ will define an entire function for each $x \in (a,b)$ and $\lambda\in \R$. 

\begin{remark}\label{RemThetaUnique}
Note that if $\theta_I$ and $\theta_{II}$ are both naturally normalized, one has \begin{align*}
    \theta_{II}(z,x) = \theta_I(z,x) + f(z)\varphi(z,x),
\end{align*} where $f$ is a real polynomial of degree $\ell_a$. If $f$ were of a higher degree or a non-polynomial entire function, \eqref{thetanbound} could not be satisfied by both $\theta_I$ and $\theta_{II}$. The appearance of the polynomial $f$ comes from the freedom to choose $\ell_a$ arbitrary real constants $A_1, \dots, A_{\ell_a}$ in \eqref{thetaNformula}, together with the fact that $\theta_0$ is also fixed only up to the addition of a constant multiple of $\varphi_0$. In particular,  in the limit circle case,~$\ell_a = 0$, our normalization condition fixes $\theta$ up to the addition of a constant multiple of $\varphi$. This is not surprising, as we just recover the usual normalization for nonoscillatory singular Sturm--Liouville operators in the limit circle case, that is,~$\varphi$ and $\theta$ will be normalized in the sense of \cite{GLN20}. 
\end{remark}

Next, we want to refine Lemma \ref{Lemma1} in the case when the system $\varphi$, $\theta$ is naturally normalized. For this, we will make use of the following lemma.
\begin{lemma}\label{lemmaFF}
    Assume $f_1$, $f_2$ are nonoscillatory near $x=a$, such that 
    \begin{align*}
        F_j(x) = \int_a^x \big[\theta_0(x)\varphi_0(t)-\varphi_0(x)\theta_0(t) \big]f_j(t) r(t) dt, \qquad x \in (a,b), \quad j = 1,2,
    \end{align*}
    exist, that is,~the integrals converge. Then $F_j$ are nonoscillatory and if $\displaystyle\lim_{x \to a^+} \tfrac{f_1(x)}{f_2(x)} = 0$, then $\displaystyle\lim_{x \to a^+} \tfrac{F_1(x)}{F_2(x)} = 0$.
\end{lemma}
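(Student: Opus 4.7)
The proof hinges on the fact that the kernel $K(x,t) := \theta_0(x)\varphi_0(t) - \varphi_0(x)\theta_0(t)$ has a definite sign when $a < t < x$ both lie sufficiently close to $a$, thanks to the Niessen--Zettl inequality \eqref{NZIneq}. This sign-definiteness means the defining integrals of $F_j$ suffer no cancellation, so $|F_j(x)|$ equals $\int_a^x |K(x,t)|\,|f_j(t)|\, r(t)\, dt$, and a pointwise comparison $|f_1| \leq \epsilon |f_2|$ can be transferred directly to the $F_j$.

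First I would fix $c \in (a,b)$ small enough so that $\varphi_0$, $\theta_0$, $f_1$, $f_2$ are all nonvanishing (hence of constant sign) on $(a,c]$ and \eqref{NZIneq} applies there. Without loss of generality, assume $\varphi_0, \theta_0 > 0$ on $(a,c]$. Then \eqref{NZIneq} gives $\varphi_0(t)\theta_0(x) < \theta_0(t)\varphi_0(x)$ for $a < t < x < c$, i.e.\ $K(x,t) < 0$ on this triangular region. Consequently, for any $x \in (a,c)$, the integrand $K(x,t) f_j(t) r(t)$ has one sign (namely the sign of $-f_j$) for $t \in (a,x)$. This immediately yields that $F_j$ itself is of constant sign on $(a,c)$, proving nonoscillation.

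For the limit statement, let $\epsilon > 0$. Since $f_1/f_2 \to 0$ as $x \to a^+$, there exists $\delta \in (a,c)$ with $|f_1(t)| \leq \epsilon |f_2(t)|$ for $t \in (a,\delta)$. For $x \in (a,\delta)$ I would estimate
\begin{align*}
|F_1(x)| \leq \int_a^x |K(x,t)|\, |f_1(t)|\, r(t)\, dt \leq \epsilon \int_a^x |K(x,t)|\, |f_2(t)|\, r(t)\, dt = \epsilon |F_2(x)|,
\end{align*}
where the final equality uses that $K(x,t) f_2(t) r(t)$ has constant sign in $t \in (a,x)$, so the absolute value may be pulled outside the integral. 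Hence $|F_1(x)/F_2(x)| \leq \epsilon$ for $x$ sufficiently close to $a$, and since $\epsilon$ was arbitrary, $F_1(x)/F_2(x) \to 0$.

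The substantive input is the sign analysis of $K$; once \eqref{NZIneq} supplies it, the rest is a routine dominated-comparison argument. The main thing to be careful about is \emph{not} trying to verify any inhomogeneous ODE identity for $F_j$ (such as $(\tau - \lambda) F_j = f_j$) through splitting the integral as $\theta_0(x)\int \varphi_0 f_j r - \varphi_0(x) \int \theta_0 f_j r$, since those individual pieces need not converge separately under the mere assumption that $F_j$ itself exists.
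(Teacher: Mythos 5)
Your proposal is correct and follows essentially the same route as the paper: both arguments rest on the sign-definiteness of the kernel $\theta_0(x)\varphi_0(t)-\varphi_0(x)\theta_0(t)$ for $a<t<x$ near $a$ (via \eqref{NZIneq}) together with the nonoscillation of $f_j$, which gives $F_j$ a fixed sign near $a$ and lets the bound $\sup_{t\in(a,x]}|f_1(t)/f_2(t)|$ pass through the integrals. Your closing caution about not splitting the integral into separately possibly divergent pieces is apt but does not change the substance.
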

\begin{proof}
    Recall that $\theta_0(x)\varphi_0(t)-\varphi_0(x)\theta_0(t)$ with $t \in (a,x)$ will have no sign changes for $x$ close enough to $a$ (see \eqref{NZIneq}). The same is true for $f_j$ by assumption. From this, it follows that $F_j$ are monotonic, hence nonoscillatory. Thus we can estimate
    \begin{align*}
        \lim_{x \to a^+} \Big|\frac{F_1(x)}{F_2(x)}\Big| &= \lim_{x \to a^+} \Bigg|\frac{\int_a^x \big[\theta_0(x)\varphi_0(t)-\varphi_0(x)\theta_0(t) \big]f_1(t) r(t) dt}{\int_a^x \big[\theta_0(x)\varphi_0(t)-\varphi_0(x)\theta_0(t) \big]f_2(t) r(t) dt} \Bigg|
        \\
        &\leq \lim_{x \to a^+} \Bigg(\sup_{t\in(a,x]}\Big| \frac{f_1(t)}{f_2(t)} \Big|\Bigg) = 0,
    \end{align*}
    finishing the proof.
\end{proof}
We state now the refinement of Lemma \ref{Lemma1} for the case that $\theta$ is normalized according to Definition \ref{DefThetaNorm}.
\begin{theorem}\label{Theorem2}
    Assume Hypothesis \ref{HypoInt}, that Hypothesis \ref{Hypothesis} holds at $x=a$, and let $\varphi$, $\theta$ be naturally normalized. Then for all $k \geq 0$ we have
    \begin{align*}
        \lim_{x \to a^+} \frac{\varphi_{k}(x)}{\theta_{\ell_a+k}(x)} = 0, \qquad \lim_{x \to a^+} \frac{\theta_{\ell_a+k+1}(x)}{\varphi_k(x)} = 0,
    \end{align*}
    and 
    \begin{align*}
        \lim_{x \to a^+} \frac{\varphi_{k+1}(x)}{\varphi_{k}(x)} = 0, \qquad \lim_{x \to a^+} \frac{\theta_{k+1}(x)}{\theta_{k}(x)} = 0.
    \end{align*}
\end{theorem}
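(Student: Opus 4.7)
The plan is to prove the first two asymptotic relations by a twin induction on $k$, then derive the last two as immediate consequences. The unifying observation is that the integral recursion \eqref{phin} defining $\varphi_n$ and the recursion \eqref{normTheta} defining $\theta_n$ for $n > \ell_a$ both use the same kernel $\theta_0(x)\varphi_0(t) - \varphi_0(x)\theta_0(t)$, so Lemma~\ref{lemmaFF} will serve as the engine that lifts asymptotic dominance from one index to the next.

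I would first verify the hypotheses needed to invoke Lemma~\ref{lemmaFF}. Every $\theta_n$ is nonoscillatory at $a$ by Lemma~\ref{lemTheta_n}, and a short inductive argument shows each $\varphi_n$ is nonoscillatory as well: by \eqref{NZIneq} the kernel keeps a fixed sign for $a<t<x$ close to $a$, so a sign-definite $\varphi_{n-1}$ forces sign-definiteness of $\varphi_n$ through its integral representation, with base case $\varphi_0=u_a$ principal and hence nonoscillatory. Convergence of the relevant integrals is guaranteed by Lemmas~\ref{LemmaRho} and~\ref{LemThetaBound}.

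Next I would run the double induction. For $\varphi_k/\theta_{\ell_a+k}\to 0$, the base case $k=0$ is immediate from Definition~\ref{DefRI}; for the inductive step, plugging $f_1=\varphi_k$, $f_2=\theta_{\ell_a+k}$ into Lemma~\ref{lemmaFF} produces $F_1=\varphi_{k+1}$ by \eqref{phin} and $F_2=\theta_{\ell_a+k+1}$ by \eqref{normTheta} (which applies since $\ell_a+k+1>\ell_a$), so $F_1/F_2\to 0$ yields the statement for $k+1$. The companion relation $\theta_{\ell_a+k+1}/\varphi_k\to 0$ is handled symmetrically, with base case given by condition $(iii)$ of Definition~\ref{DefThetaNorm} and the inductive step applying Lemma~\ref{lemmaFF} with the roles of $f_1$ and $f_2$ swapped.

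To conclude, I would derive the remaining two asymptotics by chaining the first two. For the $\varphi$-ratio,
\begin{align*}
    \frac{\varphi_{k+1}(x)}{\varphi_k(x)} = \frac{\varphi_{k+1}(x)}{\theta_{\ell_a+k+1}(x)} \cdot \frac{\theta_{\ell_a+k+1}(x)}{\varphi_k(x)} \to 0.
\end{align*}
For the $\theta$-ratio, Lemma~\ref{Lemma1} already covers $k\in\{0,\dots,\ell_a\}$, while for $k=\ell_a+m$ with $m\ge 0$ one writes
\begin{align*}
    \frac{\theta_{\ell_a+m+1}(x)}{\theta_{\ell_a+m}(x)} = \frac{\theta_{\ell_a+m+1}(x)}{\varphi_m(x)} \cdot \frac{\varphi_m(x)}{\theta_{\ell_a+m}(x)} \to 0.
\end{align*}
The main obstacle, ultimately not severe, is the bookkeeping: one must confirm at each inductive step that the integral $F_j$ produced by Lemma~\ref{lemmaFF} really coincides with $\varphi_{k+1}$ or $\theta_{n+1}$, and it is the strict inequality $\ell_a+k+1>\ell_a$ that keeps the recursion \eqref{normTheta} applicable throughout.
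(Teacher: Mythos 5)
Your proof is correct and follows essentially the same route as the paper: establish the base cases ($\varphi_0/\theta_{\ell_a}\to 0$ from Definition~\ref{DefRI} and $\theta_{\ell_a+1}/\varphi_0\to 0$ from the natural normalization) and then propagate them inductively with Lemma~\ref{lemmaFF}, whose kernel matches both recursions \eqref{phin} and \eqref{normTheta}. The paper's version is merely terser, quoting \eqref{PhiRhoEst}, \eqref{ThetaRatio}, and the estimate \eqref{ThetaN+1Est} for the ratios you instead obtain by chaining.
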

\begin{proof}
    Note that by \eqref{PhiRhoEst}, the definition of $\ell_a$, and \eqref{ThetaN+1Est} we have
    \begin{align*}
        \lim_{x \to a^+} \frac{\varphi_{1}(x)}{\varphi_{0}(x)} = 0, \qquad \lim_{x \to a^+} \frac{\varphi_{0}(x)}{\theta_{\ell_a}(x)} = 0,  \qquad  \lim_{x \to a^+} \frac{\theta_{\ell_a+1}(x)}{\varphi_{0}(x)} = 0.
    \end{align*}
    Together with \eqref{ThetaRatio} the theorem follows by Lemma~\ref{lemmaFF}.
\end{proof}
Note that Theorem \ref{Theorem2} can be summarized as stating that
\begin{align}\label{gg}
    |\theta_0(x)| \gg \dots \gg |\theta_{\ell_a}(x)| \gg |\varphi_0(x)| \gg |\theta_{\ell_a+1}(x)| \gg |\varphi_1(x)| \gg \dots,
\end{align}
where $f \gg g$ is shorthand for $\lim_{x \to a^+}\frac{g}{f}=0$. 

Note that in the more general setting of Corollary \ref{CorThetaNorm}, that is only assuming $W(\theta(z, \, \cdot \, ), \varphi(z, \, \cdot \,)) = 1$ and the standard assumption $\lim_{x\to a^+} \frac{\varphi(z_1,x)}{\varphi(z_2,x)} = 1$, the three limits
\begin{align} \label{threeLimits}
     \lim_{x \to a^+} \frac{\varphi_{k}(x)}{\varphi_m(x)}, \qquad \lim_{x \to a^+} \frac{\theta_{k}(x)}{\theta_m(x)}, \qquad \lim_{x \to a^+} \frac{\varphi_{k}(x)}{\theta_m(x)},  \qquad k, m \in \mathbb N_0,
\end{align}
always exist in the extended real numbers $\R \cup \lbrace \pm \infty \rbrace$ (use the previous theorem and Remark \ref{remarkexamples} $(i)$). This will be useful in the proof of Corollary \ref{cor:cannorm} which relies on L'H\^opital's rule.

Before continuing with an application to Darboux transforms, we show that the property of being a naturally normalized system is independent of the choice of $\lambda \in \R$ in Definition \ref{DefThetaNorm}. 

\begin{proposition}
The property of being a naturally normalized system is independent of the choice $\lambda \in \R$.
\end{proposition}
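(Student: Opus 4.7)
I would begin by noticing that conditions $(i)$ and $(ii)$ in Definition \ref{DefThetaNorm} are formulated purely in terms of the entire functions $\varphi(z,\,\cdot\,)$ and $\theta(z,\,\cdot\,)$ and their Wronskian, with no reference to the expansion point $\lambda$. Combined with Corollary \ref{Cor:RI}, which asserts that $\ell_a$ is $\lambda$-independent, the problem reduces to showing that condition $(iii)$ at some $\lambda \in \R$ forces $(iii)$ at every other $\lambda' \in \R$.

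Denoting the Taylor coefficients at $\lambda$ by $\varphi^{(\lambda)}_n(x)$, $\theta^{(\lambda)}_n(x)$ and analogously at $\lambda'$, the entire dependence on $z$ permits me to rearrange
\begin{align*}
\theta^{(\lambda')}_n(x) = \sum_{m \geq n} \binom{m}{n}(\lambda'-\lambda)^{m-n}\theta^{(\lambda)}_m(x),
\end{align*}
and likewise $\varphi^{(\lambda')}_0(x) = \varphi(\lambda',x)$. Condition $(i)$ applied at $\lambda$ then gives $\varphi^{(\lambda')}_0(x)/\varphi^{(\lambda)}_0(x) \to 1$ as $x \to a^+$, so it is enough to prove $\theta^{(\lambda')}_n(x)/\varphi^{(\lambda)}_0(x) \to 0$ for each $n > \ell_a$.

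I would split this sum into the diagonal term $m=n$ and the tail $m>n$. The diagonal contributes $\theta^{(\lambda)}_n(x)/\varphi^{(\lambda)}_0(x) \to 0$ by assumption $(iii)$ at $\lambda$. For the tail, Remark \ref{RemThetaUnique} reduces matters to the $\theta_m$ produced by the recursion \eqref{normTheta} (any other naturally normalized $\theta$ differs from this one by a polynomial-in-$z$ multiple of $\varphi$ of degree at most $\ell_a$, and this contributes only terms $\varphi_{m-k}(x)$ with $0 \leq k \leq \ell_a$, which satisfy the same type of bound by Lemma \ref{LemmaRho}). Lemma \ref{LemThetaBound} then supplies $|\theta^{(\lambda)}_m(x)| \leq \rho^{m-\ell_a-1}(x)|\varphi^{(\lambda)}_0(x)|$ for $m > \ell_a$ in a neighborhood of $a$, so the tail is bounded by
\begin{align*}
\rho^{n-\ell_a-1}(x) \sum_{k \geq 1}\binom{n+k}{n}\bigl(|\lambda'-\lambda|\rho(x)\bigr)^{k},
\end{align*}
which is $O(\rho^{n-\ell_a}(x))$ as $x \to a^+$ (for $x$ close to $a$ the series converges as a geometric-type series in $|\lambda'-\lambda|\rho(x) \to 0$), and hence tends to $0$ since $n - \ell_a \geq 1$.

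The main technical subtlety is the borderline case $n = \ell_a + 1$: here Lemma \ref{LemThetaBound} gives no $\rho$-decay for the diagonal term $\theta^{(\lambda)}_{\ell_a+1}$ itself, so it cannot be estimated from the bound alone and must be handled by invoking condition $(iii)$ at $\lambda$ directly. This is precisely why the diagonal/tail split is essential; once it is in place the tail decays thanks to the extra $\rho$ factor from the $k \geq 1$ terms, establishing $(iii)$ at $\lambda'$ and completing the proof.
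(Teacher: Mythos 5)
Your proposal is correct and follows essentially the same route as the paper's proof: both rewrite the Taylor coefficients at $\lambda'$ via the binomial rearrangement $\theta^{(\lambda')}_n = \sum_{m\ge n}\binom{m}{n}(\lambda'-\lambda)^{m-n}\theta^{(\lambda)}_m$, use \eqref{phiNorm} to replace $\varphi^{(\lambda')}_0$ by $\varphi^{(\lambda)}_0$, handle the diagonal term by condition $(iii)$ at $\lambda$, and control the tail with the bound $|\theta_m|\le\rho^{m-\ell_a-1}|\varphi_0|$ from Lemma \ref{LemThetaBound}, gaining the extra factor $\rho(x)\to 0$. Your explicit flagging of the borderline case $n=\ell_a+1$ and the reduction via Remark \ref{RemThetaUnique} are sound refinements of the same argument.
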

\begin{proof}
 Let $\varphi$, $\theta$ be naturally normalized for some fixed choice of $\lambda \in \R$. Choose any $\widetilde \lambda \in \R$ and write
    \begin{align*}
        \theta(z,x) = \sum_{n \geq 0} \widetilde \theta_n(x)(z-\widetilde \lambda)^n, \qquad \varphi(z,x) = \sum_{n \geq 0} \widetilde \varphi_n(x)(z-\widetilde \lambda)^n. 
    \end{align*}
We need to show that $\widetilde \theta_n$ satisfies
\begin{align}\label{tildeRatio}
    \lim_{x \to a^+} \frac{\widetilde \theta_n(x)}{\widetilde \varphi_0(x)} = 0 \ \text{ for}\ n > \ell_a.
\end{align}
To this end, observe that
\begin{align*}
    \widetilde \theta_n(x) = \sum_{j \geq n} \theta_j(x) \big( \widetilde \lambda - \lambda\big)^{j-n}\binom{j}{n}.
\end{align*}
Thus as $\lim_{x \to a^+}\frac{\widetilde \varphi_0(x)}{\varphi_0(x)} = 1$ by \eqref{phiNorm}, we have
\begin{align*}
    \lim_{x \to a^+} \frac{\widetilde \theta_n(x)}{\widetilde\varphi_0(x)} = \lim_{x \to a^+} \frac{\widetilde \theta_n(x)}{\varphi_0(x)} = \lim_{x \to a^+} \Bigg[\frac{\theta_n(x)}{\varphi_0(x)} + \sum_{j = n+1}^\infty \frac{\theta_j(x)}{\varphi_0(x)} \big( \widetilde \lambda - \lambda\big)^{j-n}\binom{j}{n}\Bigg].
\end{align*}
Now for $n > \ell_a$ we have $\lim_{x \to a^+} \frac{\theta_n(x)}{\varphi_0(x)} = 0$. In fact, $\big| \frac{\theta_j(x)}{\varphi_0(x)} \big| \leq \rho^{j -\ell_a-1}(x)$ for $j > \ell_a$ and $x \in (a,c)$ for some $c \in (a,b)$ by Lemma~\ref{LemThetaBound}. Hence, we can estimate
\begin{align*}
    \Bigg|\sum_{j = n+1}^\infty \frac{\theta_j(x)}{\varphi_0(x)} \big( \widetilde \lambda - \lambda\big)^{j-n}\hspace*{-5pt}\underbrace{\binom{j}{n}}_{\leq j^n/n!} \Bigg| \leq \rho(x) \underbrace{\frac{1}{n!}\sum_{j = n+1}^\infty \rho^{j - \ell_a -2}(x) \big| \widetilde \lambda - \lambda\big|^{j-n} j^n}_{= \Sigma(x)}, \;   x \in (a,c).
\end{align*}
Note that $j \geq \ell_a + 2$ in the above sum and $\Sigma(x)$ converges as long as $\rho(x)|\widetilde \lambda - \lambda| < 1$, which is true for $x$ close enough to $a$. Moreover, $\Sigma(x)$ is a monotonically increasing function (for $x$ increasing), while $\lim_{x \to a^+} \rho(x) = 0$. Thus $\lim_{x \to a^+} \rho(x)\Sigma(x) = 0$ and the claim \eqref{tildeRatio} follows finishing the proof.
\end{proof}

\section{Connection with Darboux transforms}\label{SectDarb}

Let us now turn to an application of the regularization index. For this we make additional regularity assumptions on our Sturm--Liouville differential expression.
\begin{hypothesis}\label{HypoDarb}
In addition to Hypothesis \ref{HypoInt}, assume further that $(pr),(pr)'/r \in AC_{loc}((a,b))$ and $(pr)\big|_{(a,b)}>0$.
\end{hypothesis}

With these assumptions the Sturm--Liouville differential expression \eqref{SL}  can be transformed into an equivalent Schr\"odinger differential expression given by
\begin{align}\label{SchroedingerForm}
    -\frac{d^2}{dX^2} + Q(X), \qquad X \in (A,B) \subseteq \mathbb R,
\end{align}
via the Liouville transform (see \cite{Ev05} and \cite[Sect. 3.5]{GNZ23}).
As mentioned in \cite[Sect. 3.5]{GNZ23}, we point out that the conditions given in Hypothesis \ref{HypoDarb} allow for different examples that are not included under the typical conditions assumed, namely, the conditions $p, p', r, r' \in AC_{loc}((a,b))$ with $p,r>0$ on $(a,b)$. This can be seen by considering the elementary example $p(x)=r(x)^{-1}=|x-1|^{1/2},\ q(x)=0$ on $(0,2)$ for instance.
Returning to applying the transform, choose $k \in (a,b)$ and define
\begin{align}\nonumber
    X(x) &= \int_k^x \sqrt{\frac{r(t)}{p(t)}} dt, \qquad x \in (a,b),
    \\\label{Def:AB}
     A &= -\int_a^k \sqrt{\frac{r(t)}{p(t)}} dt \ \in [-\infty, 0), \qquad B = \int_k^b \sqrt{\frac{r(t)}{p(t)}} dt \ \in (0, \infty],
     \\\nonumber
     Y(X) &= \big(p(x)r(x)\big)^{1/4}y(x), \qquad x \in (a,b),
     \\\nonumber
    Q(X) 
    &=\frac{q(x)}{r(x)}-\frac{1}{16p(x)r(x)}\left[\frac{(pr)'(x)}{r(x)}\right]^2+\frac{1}{4r(x)}\left[\frac{(pr)'(x)}{r(x)}\right]' , \qquad x \in (a,b).
\end{align}
Then $y$ solves $\tau y = z y$, if and only if
\begin{align*}
    -\frac{d^2}{dX^2}Y(z,X) + Q(X) Y(z,X) = z Y(z,X), \qquad X \in (A,B),
\end{align*}
where one readily verifies that under the assumptions in Hypothesis \ref{HypoDarb}, one has $Q\in L^1_{loc}((A,B);dX)$ where $dX=[r(x)/p(x)]^{1/2} dx$.
Note in particular that the regularization index remains invariant under the above transformation. 

The motivation for introducing Hypothesis \ref{HypoDarb} is twofold. First, defining Darboux transforms for general Sturm--Liouville differential expressions requires additional regularity assumptions on $p$ and $r$ (see \cite{GT96}). Secondly, Darboux transforms applied to differential expressions in Schr\"odinger form (also \emph{Liouville form}) have a much simpler form (cf.~\cite{G-UKM}).
Similar to the Liouville transform, Hypothesis \ref{HypoDarb} is weaker than the typical assumptions for Darboux transformations such as those found in \cite{GT96}, allowing for more general examples to be considered.

To avoid unnecessary notation, we will assume that $p \equiv 1$ and $r \equiv 1$, so that $\tau = -\frac{d^2}{dx^2}+q$ is already in Schr\"odinger form for the rest of this section.    

Let us now assume that $\psi$ is a positive solution of $\tau y = \lambda y$, meaning that 
\begin{align*}
    \tau \psi = \lambda \psi \ \text{ with} \ \psi(x) > 0, \quad  x \in (a,b).
\end{align*}
Such $\psi$, often called the seed function, exists if and only if $\tau$ is nonoscillatory at both endpoints, which we will assume from now on (see, e.g., \cite[Cor.~2.4]{GST}). Then as a formal differential expression, $\tau$ can be factorized as follows:
\begin{align*}
    \tau = -\frac{d^2}{dx^2} + q &= \Big(\frac{d}{dx} + \frac{\psi'}{\psi} \Big) \Big( -\frac{d}{dx} + \frac{\psi'}{\psi}\Big) +\lambda
    = B_\psi A_\psi +\lambda. 
\end{align*}
Note that we avoid the common notation $B_\psi = A_\psi^*$ to emphasize that $A_\psi$ and $B_\psi$ are just formal differential expressions rather than operators. We define the associated Darboux transformed differential expression by
\begin{align*}
    \widehat \tau = A_\psi B_\psi +\lambda &= \Big(-\frac{d}{dx} + \frac{\psi'}{\psi} \Big) \Big(\frac{d}{dx} + \frac{\psi'}{\psi}\Big) +\lambda
    = -\frac{d^2}{dx^2} + \widehat q,
\end{align*}
where 
\begin{align*}
    \widehat q = q -2 \frac{d}{dx}\Big( \frac{\psi'}{\psi}\Big),
\end{align*}
as can be verified by a direct computation. We say that $\widehat \tau$ is obtained from $\tau$ via a Darboux transform with seed function $\psi$.

Take two functions $f,g$ such that $\tau f = g$, and define $\widehat f = A_\psi  f$, $\widehat g = A_\psi g$. Then a quick computation shows that 
\begin{equation*}
\widehat \tau \widehat f = \big(A_\psi B_\psi +\lambda\big)A_\psi f = A_\psi \tau f = A_\psi g = \widehat g.
\end{equation*}
We want to study what happens if we apply $A_\psi$ to the naturally normalized system $\varphi$, $\theta$ defined in the previous sections (cf.~\cite[Sect.~3]{KST_MN}). Here we assume that Hypothesis \ref{Hypothesis} holds without any additional assumptions on the regularization index $\ell_a \in \mathbb N_0 \cup \lbrace \infty \rbrace$. We have to distinguish between two cases:
\\

\noindent
\textbf{Case 1: The seed function $\psi$ is principal at $a$}: In this case, possibly after scaling $\varphi$ by a real non-zero constant, we have 
\begin{align*}
    \varphi(z,x) = \psi(x) + \sum_{n \geq 1} \varphi_n(x)(z-\lambda)^n,
\end{align*}
that is,~$\psi = \varphi_0$, as principal solutions are unique up to scalar multiples. We define
\begin{align*}
    \widehat \varphi(z,x) = \frac{1}{z-\lambda} A_\psi \varphi(z,x) = \sum_{n \geq 0} \widehat \varphi_n(x)(z-\lambda)^n, \ \text{ where}\ \widehat   \varphi_n = A_\psi \varphi_{n+1}.
\end{align*}
Here we have used the analogue of Lemma \ref{LongLem} for $A_\psi$. Note that $A_\psi \psi = 0$. In a similar manner we define
\begin{align*}
    \widehat \theta(z,x) = A_\psi \theta(z,x) = \sum_{n \geq 0} \widehat \theta_n(x)(z-\lambda)^n, \ \text{ where}\ \widehat   \theta_n = A_\psi \theta_{n}.
\end{align*}
\\
\noindent
\textbf{Case 2: The seed function $\psi$ is nonprincipal at $a$}: In this case, possibly after scaling $\theta$ by a real non-zero constant and adding a real multiple of $\varphi$ to it, we have 
\begin{align*}
    \theta(z,x) = \psi(x) + \sum_{n \geq 1} \theta_n(x)(z-\lambda)^n,
\end{align*}
that is,~$\psi = \theta_0$. We then define
\begin{align}\label{wtNP}
    \widehat \theta(z,x) = \frac{1}{z-\lambda} A_\psi \theta(z,x) = \sum_{n \geq 0} \widehat \theta_n(x)(z-\lambda)^n, \ \text{ where } \widehat   \theta_n = A_\psi \theta_{n+1}.
\end{align}
In a similar manner, we define
\begin{align} \label{wphiNP}
    \widehat \varphi(z,x) = A_\psi \varphi(z,x) = \sum_{n \geq 0} \widehat \varphi_n(x)(z-\lambda)^n, \ \text{ where } \widehat   \varphi_n = A_\psi \varphi_{n}.
\end{align}

\subsection{Properties of \texorpdfstring{$\widehat \varphi$, $\widehat \theta$}{varphi, theta}}
Independent of the (non)principality of the seed function $\psi$, we have the equality 
\begin{align}\label{HatWronski}
    W\big(\wt(z, \, \cdot \,), \wphi(z, \, \cdot \,) \big) = 1.
\end{align}
This formula follows from the identity $W(A_\psi \theta, A_\psi \varphi \big) = (z-\lambda) W(\theta, \varphi)$, which can be verified by direct computation (note that the Wronskian $W(f,g) = fg'-f'g$ is the same for $\tau$ and $\widehat \tau$). 

Throughout this section we will repeatedly use L'H\^opital's rule as summarized in the following lemma.
\begin{lemma}\label{LemlH2}
    Let $f,g\in AC_{loc}((a,b))$ be given such that either $\lim_{x \to a^+} \frac{f(x)}{\psi(x)} = \lim_{x \to a^+} \frac{g(x)}{\psi(x)} = 0$ or $\lim_{x \to a^+} \frac{g(x)}{\psi(x)} = \pm \infty$. If the limit $\lim_{x \to a^+} \frac{A_\psi f(x)}{A_\psi g(x)}$ exists in the extended real numbers, then
    \begin{align*}
        \lim_{x \to a^+} \frac{f(x)}{g(x)} = \lim_{x \to a^+} \frac{A_\psi f(x)}{A_\psi g(x)} \in \mathbb R \cup \lbrace \pm \infty \rbrace.
    \end{align*}
\end{lemma}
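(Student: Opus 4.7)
The plan is to reduce the statement to the classical L'H\^opital's rule via the algebraic identity
\[
A_\psi f = -f' + \frac{\psi'}{\psi}\, f = -\psi \cdot \Big(\frac{f}{\psi}\Big)',
\]
valid for every $f \in AC_{loc}((a,b))$ since $\psi$ is absolutely continuous, strictly positive, and hence $1/\psi$ is $AC_{loc}$ as well. This identity immediately rewrites the right-hand side of the desired conclusion as
\[
\frac{A_\psi f(x)}{A_\psi g(x)} = \frac{(f/\psi)'(x)}{(g/\psi)'(x)}.
\]

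Setting $F := f/\psi$ and $G := g/\psi$, the hypothesis in the first case ($\lim_{x\to a^+} f/\psi = \lim_{x\to a^+} g/\psi = 0$) or in the second case ($\lim_{x\to a^+} g/\psi = \pm\infty$) places us exactly in the two standard settings of the classical L'H\^opital rule on the half-open interval $(a,c)$ for $c$ close to $a$. Combined with the assumed existence of $\lim_{x\to a^+} F'/G' = \lim_{x\to a^+} A_\psi f/A_\psi g$, the rule yields
\[
\lim_{x\to a^+} \frac{F(x)}{G(x)} = \lim_{x\to a^+} \frac{F'(x)}{G'(x)}.
\]
Since $f/g = F/G$, this is precisely the claim.

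The only items to verify are the mild regularity prerequisites of the classical L'H\^opital rule: that $F,G$ are absolutely continuous near $a$ (which follows from $f,g \in AC_{loc}((a,b))$ and $\psi,1/\psi \in AC_{loc}((a,b))$) and that $G'$ does not vanish identically on any right-neighborhood of $a$ (which is automatic once $\lim_{x\to a^+} A_\psi f/A_\psi g$ is assumed to exist in the extended reals, since otherwise the quotient would be undefined). There is no substantive obstacle here; the lemma is essentially a bookkeeping observation that the Darboux operator $A_\psi$ is, up to the factor $-\psi$, the derivative taken relative to the logarithmic reference $\psi$, so classical quotient calculus transfers directly.
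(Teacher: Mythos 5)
Your proof is correct and is essentially identical to the paper's: the paper likewise observes that $A_\psi f/A_\psi g = (f/\psi)'/(g/\psi)'$ and applies the classical L'H\^opital rule to the quotient $(f/\psi)/(g/\psi)=f/g$ under the two stated hypotheses. Your additional remarks on the regularity prerequisites are fine and only make explicit what the paper leaves implicit.
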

\begin{proof}
    This is a simple application of L'H\^opital's rule:
    \begin{align*}
        \lim_{x \to a^+} \frac{f(x)}{g(x)} = \lim_{x \to a^+} \frac{\Bigg(\cfrac{f(x)}{\psi(x)}\Bigg)'}{\Bigg(\cfrac{g(x)}{\psi(x)}\Bigg)'} = \lim_{x \to a^+} \frac{A_\psi f(x)}{A_\psi g(x)}.
    \end{align*}
\end{proof}
We now proceed with the normalization properties of $\wphi$ and $\wt$. We begin with the following proposition.
\begin{proposition}\label{PropRatio}
    Assume Hypothesis \ref{HypoInt}, that Hypothesis \ref{Hypothesis} holds at $x=a$,  and let $\varphi$, $\theta$ be naturally normalized with either $\psi = \varphi_0$ or $\psi = \theta_0$. Then 
    \begin{enumerate}
        \item $\displaystyle\lim_{x \to a^+} \cfrac{\wphi(z_1,x)}{\wphi(z_2,x)} = 1$, \ for all \ $z_1, z_2 \in \mathbb R$, 
        \\
        \item $\displaystyle\lim_{x \to a^+} \cfrac{\wt(z_1,x)}{\wt(z_2,x)} = 1$, \ for all \ $z_1, z_2 \in \mathbb R$. 
    \end{enumerate}
    In particular, $\widehat \tau$ satisfies Hypothesis \ref{Hypothesis} at $x = a$.
\end{proposition}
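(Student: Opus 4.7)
The plan is to reduce $(i)$ and $(ii)$ to applications of L'Hopital's rule, combining Lemma \ref{LemlH2} with the strict asymptotic chain \eqref{gg} from Theorem \ref{Theorem2}. First, since $\wphi(z,x)$ and $\wt(z,x)$ are power series $\sum_n \wphi_n(x)(z-\lambda)^n$, $\sum_n \wt_n(x)(z-\lambda)^n$ converging for all $(z,x) \in \C \times (a,b)$ (by construction via $A_\psi$ applied termwise to the series for $\varphi, \theta$ together with Proposition \ref{PropPhiEntire}), the statements $(i)$ and $(ii)$ reduce, with locally uniform control in $z$, to showing $\wphi_n(x)/\wphi_0(x) \to 0$ and $\wt_n(x)/\wt_0(x) \to 0$ as $x \to a^+$ for every $n \geq 1$. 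I would treat Cases 1 and 2 in parallel, as they are structurally symmetric under the exchange of $\varphi$ with $\theta$.

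Next, using the identity $A_\psi f = W(f, \psi)/\psi$, each such ratio becomes a quotient of Wronskians of the coefficient functions with $\psi$: for instance, in Case 1 one gets $\wphi_n/\wphi_0 = W(\varphi_{n+1}, \varphi_0)/W(\varphi_1, \varphi_0)$ and $\wt_n/\wt_0 = W(\theta_n, \varphi_0)/W(\theta_0, \varphi_0) = W(\theta_n, \varphi_0)$, with the obvious swapped expressions in Case 2. Differentiating, in parallel with the proof of Lemma \ref{lemTheta_n}, yields
\begin{equation*}
\partial_x W(\varphi_{n+1}, \varphi_0) = \varphi_n \varphi_0, \qquad \partial_x W(\theta_{n+1}, \theta_0) = \theta_n \theta_0,
\end{equation*}
together with the mixed identities $\partial_x W(\theta_n, \varphi_0) = \theta_{n-1} \varphi_0$ and $\partial_x W(\varphi_n, \theta_0) = \varphi_{n-1} \theta_0$. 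Since $\varphi_n$ and $\theta_n$ are nonoscillatory near $a$, each Wronskian is monotonic near $a$ and hence has a limit in $\R \cup \{\pm \infty\}$. The ratio therefore has a limit in the extended reals unless it is in $0/0$ or $\infty/\infty$ indeterminate form, in which case standard L'Hopital applied to the Wronskian ratio --- using the derivative identities above and \eqref{gg} --- supplies the limit:
\begin{equation*}
\lim_{x \to a^+} \frac{W(\varphi_{n+1}, \varphi_0)}{W(\varphi_1, \varphi_0)} = \lim_{x \to a^+} \frac{\varphi_n \varphi_0}{\varphi_0^2} = \lim_{x \to a^+} \frac{\varphi_n}{\varphi_0} = 0, \qquad n \geq 1.
\end{equation*}
Once existence of the $A_\psi f/A_\psi g$ limit is secured, Lemma \ref{LemlH2} (whose hypotheses $f/\psi, g/\psi \to 0$ or $g/\psi \to \pm\infty$ are supplied directly by \eqref{gg}) identifies that limit with $\lim f/g$, which vanishes for each $n \geq 1$ by \eqref{gg} again.

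Finally, for the ``in particular'' claim I would invoke Corollary \ref{CorConv} applied to $\widehat \tau$. This requires $\wphi(z, \cdot)$ to be entire and principal at $a$ for $\widehat \tau$; the former is automatic, while the latter is verified in Case 1 using $\wphi_0 = W(\varphi_1, \varphi_0)/\varphi_0 = F/\varphi_0$ with $F(x) = \int_a^x \varphi_0^2\, dt \to 0^+$, since $1/\wphi_0^2 = \varphi_0^2/F^2 = -(F^{-1})'$ is non-integrable at $a$. The required integrability $\int_a^c |\wphi_0\, \wt_0|\, dx < \infty$ then follows from the telescoping identity $\wphi_0 \wt_0 = (A_\psi \varphi_1)/\varphi_0 = -(\varphi_1/\varphi_0)'$ combined with $\varphi_1/\varphi_0 \to 0$ and monotonicity, with Case 2 entirely symmetric (swap the roles of $\varphi$ and $\theta$). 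The main obstacle I foresee is the bookkeeping for the Wronskian-quotient existence step, particularly iterated L'Hopital when the ratio starts in indeterminate form; this is precisely where the nonoscillation structure from Section \ref{secttheta} is essential.
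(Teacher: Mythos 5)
Your coefficient-level computations are essentially correct: the Wronskian identities $\partial_x W(\varphi_{n+1},\varphi_0)=\varphi_n\varphi_0$, $\partial_x W(\theta_n,\varphi_0)=\theta_{n-1}\varphi_0$, the monotonicity/nonoscillation arguments, and the L'H\^opital computations showing $\wphi_n/\wphi_0\to 0$ and $\wt_n/\wt_0\to 0$ for $n\geq 1$ all go through. But there is a genuine gap at the very first step: you reduce $(i)$ and $(ii)$ to these termwise statements ``with locally uniform control in $z$,'' and that control is exactly what you never establish. Termwise convergence $\wphi_n(x)/\wphi_0(x)\to 0$ does not imply $\sum_{n\geq 1}\big(\wphi_n(x)/\wphi_0(x)\big)(z-\lambda)^n\to 0$; one needs a bound geometric in $n$, i.e.\ the analogue of Lemma \ref{LemmaRho} and Lemma \ref{LemThetaBound} for the \emph{transformed} coefficients. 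For $\wphi$ in Case 1 such a bound can be extracted ($|W(\varphi_{n+1},\varphi_0)(x)|=|\int_a^x\varphi_n\varphi_0\,r\,dt|\leq\rho^n(x)\,W(\varphi_1,\varphi_0)(x)$), but for $\wt$ the coefficients $\wt_n/\wt_0=W(\theta_n,\varphi_0)$ with $1\leq n\leq\ell_a$ only satisfy $\theta_{n-1}=O(\theta_0)$ with $n$-dependent constants, so summability in $n$ is not automatic and needs an argument. The paper avoids this entirely: it applies L'H\^opital (Lemma \ref{LemlH2}) directly to the ratios of the \emph{full} entire solutions, $\varphi(z_1,\cdot)/\varphi(z_2,\cdot)$ when $\psi$ is nonprincipal (the $0/0$ form) and $\theta(z_1,\cdot)/\theta(z_2,\cdot)$ when $\psi$ is principal (the $\infty/\infty$ form), whose limits are already known to equal $1$ by Corollaries \ref{CorNorm} and \ref{CorThetaNorm}; the remaining item of $(i)$/$(ii)$ then follows from Corollary \ref{CorThetaNorm} applied to $\widehat\tau$ together with \eqref{HatWronski}. (Existence of the limit in the extended reals, needed for Lemma \ref{LemlH2}, is supplied by the monotonicity of $W(f,\psi)$, exactly as in your Wronskian step.)

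Two smaller points on the ``in particular'' claim. First, Corollary \ref{CorConv} as stated requires the normalization for complex $z_1,z_2$ and requires the solution fed into it to be \emph{principal}; by Proposition \ref{PropPhiTheta}, in Case 2 with $\ell_a=0$ it is $\wt$, not $\wphi$, that is principal, so ``Case 2 entirely symmetric'' is not quite right and you would have to track which of $\wphi,\wt$ is principal (a fact the paper only proves \emph{after} this proposition). The cleaner route, and the one implicit in the paper, is Theorem \ref{TFAE}: items $(i)$ and $(ii)$ of the proposition are precisely conditions $(ii)$ and $(iv)$ of Theorem \ref{TFAE} for $\widehat\tau$ (whichever of $\wphi,\wt$ happens to be principal), and these are equivalent to Hypothesis \ref{Hypothesis} at $x=a$ without any need to verify the integrability of $\wphi_0\wt_0$ by hand.
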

\begin{proof}
First observe that $A_\psi f = \frac{W(f, \psi)}{\psi}$. Moreover, if $f$ is a solution of $\tau f = z f$, $z \not = \lambda$, we have
    \begin{align}\label{diffW}
        \frac{d}{dx}\big( W(f, \psi)(x) \big) = (z-\lambda) f(x) \psi(x), 
    \end{align}
    which implies that $W(f,\psi)(x)$ is monotonic as $x \to a^+$, as $\tau y = zy$ is nonoscillatory for all $z \in \mathbb R$. It then follows that $A_\psi f = \frac{W(f, \psi)}{\psi}$ is nonoscillatory, implying that $\widehat \tau y = z y$ is nonoscillatory at $a$ for all $z \in \mathbb R \setminus \lbrace \lambda \rbrace$. It is also nonoscillatory for $z = \lambda$, as being nonoscillatory for $\lambda_1$ implies being nonoscillatory for any $\lambda_2 < \lambda_1$.
    
    Note the limit $\displaystyle\lim_{x \to a^+} \tfrac{\wphi(z_1,x)}{\wphi(z_2,x)}$ exists in the extended real numbers as $\big(\frac{\wphi(z_1,x)}{\wphi(z_2,x)}\big)' = \frac{W(\wphi(z_2,x), \wphi(z_1,x))}{\wphi(z_2,x)^2}$ is nonoscillatory near $a$ by \eqref{diffW}. In case $\psi$ is nonprincipal, we obtain using L'H\^opital's rule,
    \begin{align*}
        1 = \lim_{x \to a^+} \frac{\varphi(z_1,x)}{\varphi(z_2,x)} = \lim_{x \to a^+} \frac{\Bigg(\cfrac{\varphi(z_1,x)}{\psi(x)}\Bigg)'}{\Bigg(\cfrac{\varphi(z_2,x)}{\psi(x)}\Bigg)'} = \lim_{x \to a^+} \frac{A_\psi \varphi(z_1,x)}{A_\psi \varphi(z_2,x)} =  \lim_{x \to a^+} \frac{\wphi(z_1,x)}{\wphi(z_2,x)}.
    \end{align*}
    If $\psi$ is principal we instead write
    \begin{align*}
        1 = \lim_{x \to a^+} \frac{\theta(z_1, x)}{\theta(z_2, x)} = \lim_{x \to a^+} \frac{\Bigg(\cfrac{\theta(z_1, x)}{\psi(x)}\Bigg)'}{\Bigg(\cfrac{\theta(z_2, x)}{\psi(x)}\Bigg)'}
        =  \lim_{x \to a^+} \frac{A_\psi \theta(z_1,x)}{A_\psi \theta(z_2,x)}=\lim_{x \to a^+} \frac{\wt(z_1,x)}{\wt(z_2,x)}.
    \end{align*}

   Now the asymptotics $(i)$ and $(ii)$ are equivalent by Corollary \ref{CorThetaNorm} together with \eqref{HatWronski}, finishing the proof. 
\end{proof}

The next results concern the (non)principality of $\wphi$, $\wt$.
\begin{proposition}\label{PropPhiTheta}
    Assume Hypothesis \ref{HypoInt}, that Hypothesis \ref{Hypothesis} holds at $x=a$,  and let $\varphi$, $\theta$ be naturally normalized such that additionally either $\psi = \varphi_0$ or $\psi = \theta_0$, where $\psi > 0$ is the seed function.
    \begin{enumerate}
        \item If the seed function $\psi$ is nonprincipal and $\tau$ is in the limit circle case at $a$, then $\wphi(z,x)$ is nonprincipal and $\wt(z,x)$ is principal at $x=a$ for all $z \in \mathbb R$.
        \\
        \item In all other cases $\wphi(z,x)$ is principal and $\wt(z,x)$ is nonprincipal at $x=a$ for all $z \in \mathbb R$
    \end{enumerate}
\end{proposition}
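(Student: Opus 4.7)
The plan is to reduce to the single energy $z=\lambda$ and analyze the leading coefficients $\wphi_0=\wphi(\lambda,\cdot)$ and $\wt_0=\wt(\lambda,\cdot)$. By Proposition \ref{PropRatio} the Darboux transform $\widehat\tau$ satisfies Hypothesis \ref{Hypothesis} at $x=a$, so $\widehat\tau f=zf$ is nonoscillatory there for every $z\in\R$ and both $\wphi$ and $\wt$ obey the ratio normalization $\lim_{x\to a^+}\wphi(z_1,x)/\wphi(z_2,x)=1$ (and analogously for $\wt$, by Corollary \ref{CorThetaNorm}). Since for $p\equiv 1$ a solution $f$ of $\widehat\tau f=zf$ is principal at $a$ iff $f^{-2}\notin L^1$ near $a$, the argument of Corollary \ref{CorPrinc} shows that principality of $\wphi$ (resp.\ $\wt$) at $z=\lambda$ propagates to all real $z$.

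The workhorse identities are $A_\psi f=W(f,\psi)/\psi$ together with $W(\varphi_1,\varphi_0)'=\varphi_0^2$ and $W(\theta_1,\theta_0)'=\theta_0^2$, both following from $(\tau-\lambda)\varphi_1=\varphi_0$ and $(\tau-\lambda)\theta_1=\theta_0$.

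In Case 1 ($\psi=\varphi_0$), one reads off $\wt_0=A_\psi\theta_0=1/\varphi_0$ (using $W(\theta_0,\varphi_0)=1$), and integrating the recursion \eqref{phin} for $\varphi_1$ against $\varphi_0$ gives $\wphi_0=W(\varphi_1,\varphi_0)/\varphi_0=\bigl(\int_a^x \varphi_0(t)^2\,dt\bigr)/\varphi_0(x)$. Hypothesis \ref{Hypothesis} together with $\varphi_0/\theta_0\to 0$ forces $\varphi_0^2\in L^1$ near $a$, hence $\wphi_0/\wt_0=\int_a^x\varphi_0^2\,dt\to 0$; thus $\wphi_0$ is principal and $\wt_0$ (linearly independent by \eqref{HatWronski}) nonprincipal, which is claim (ii) whenever the seed is principal.

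In Case 2 ($\psi=\theta_0$), one gets $\wphi_0=-1/\theta_0$ and $\wt_0=W(\theta_1,\theta_0)/\theta_0$, and a direct computation from \eqref{thetaNformula} yields $W(\theta_1,\theta_0)(x)=-A_1-\int_x^c\theta_0(t)^2\,dt$, where $A_1$ is the constant prescribed by the naturally normalized expansion. In the limit circle subcase ($\ell_a=0$, equivalently $\theta_0^2\in L^1$ near $a$), Definition \ref{DefThetaNorm}\,(iii) forces $\theta_1/\varphi_0\to 0$, which uniquely pins down $A_1=-\int_a^c\theta_0^2\,dt$; hence $W(\theta_1,\theta_0)(x)=\int_a^x\theta_0^2\,dt\to 0$ and $\wt_0/\wphi_0\to 0$, yielding (i). In the limit point subcase ($\ell_a\geq 1$, equivalently $\int_a^c\theta_0^2\,dt=\infty$), $|W(\theta_1,\theta_0)(x)|\to\infty$ regardless of the value of $A_1$, giving $\wphi_0/\wt_0\to 0$ and therefore (ii). The main obstacle is the limit circle subcase of Case 2: the precise value of $A_1$ from Definition \ref{DefThetaNorm} has to be matched with the integral $\int_a^c\theta_0^2\,dt$ that encodes the limit circle/limit point dichotomy; once this identification is in place, the remaining cases reduce to tracking integrability of $\varphi_0^2$ and $\theta_0^2$ near $a$.
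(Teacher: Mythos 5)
Your proposal is correct, but it takes a genuinely different route from the paper. The paper's proof handles all cases uniformly by a single application of L'H\^opital's rule (Lemma \ref{LemlH2}): it starts from the already-known limits $\lim_{x\to a^+}\theta_1/\varphi_0=0$ (natural normalization with $\ell_a=0$), $\lim_{x\to a^+}\varphi_0/\theta_1=0$ ($\ell_a\geq 1$), and $\lim_{x\to a^+}\varphi_1/\theta_0=0$ (the hierarchy \eqref{gg}), and transports each of them through $A_\psi$ to the corresponding ratio $\wt_0/\wphi_0$ or $\wphi_0/\wt_0$. You instead compute the transformed leading coefficients in closed form via $A_\psi f=W(f,\psi)/\psi$, obtaining $\wt_0=1/\varphi_0$, $\wphi_0=\big(\int_a^x\varphi_0^2r\,dt\big)/\varphi_0$ in the principal-seed case and $\wphi_0=-1/\theta_0$, $\wt_0=-\big(A_1+\int_x^c\theta_0^2r\,dt\big)/\theta_0$ in the nonprincipal-seed case; principality is then read off from the convergence or divergence of $\int_a^x\varphi_0^2r\,dt$ and $\int_a^c\theta_0^2r\,dt$. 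All your computations check out: $W(\varphi_1,\varphi_0)(x)=\int_a^x\varphi_0^2r\,dt$ follows from \eqref{phin}, the bound $\varphi_0^2\leq|\varphi_0\theta_0|$ near $a$ gives $\varphi_0\in L^2$ under Hypothesis \ref{Hypothesis}, and the identification $A_1=-\int_a^c\theta_0^2r\,dt$ in the limit circle subcase is exactly what the paper establishes in the proof of Theorem \ref{Thm:N=0} (where $A_1'=A_1+\int_a^c\theta_0^2r\,dt$ is shown to equal $\lim_{x\to a^+}\theta_1/\varphi_0$, which vanishes under Definition \ref{DefThetaNorm}$\,(iii)$). What your approach buys is an explicit exhibition of the mechanism: the limit circle/limit point dichotomy enters literally as $\theta_0\in L^2$ versus $\theta_0\notin L^2$ near $a$, and the Darboux-transformed solutions are written down concretely. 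What it costs is the need to pin down the constant $A_1$ from the natural normalization and to track signs through the Wronskian identities, whereas the paper's L'H\^opital argument sidesteps all explicit formulas and extends verbatim to the higher coefficients needed in Theorem \ref{Thm:Darboux} and Corollary \ref{cor:cannorm}.
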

\begin{proof}
    $(i)$ By assumption, $\tau$ is in the limit circle case at $x=a$ so that $\ell_a = 0$ by Theorem \ref{Thm:N=0}. We know from the proof of Proposition \ref{PropRatio} that $\wphi$, $\wt$ are nonoscillatory at $a$, meaning that the limit $\lim_{x \to a^+} \frac{\wt_0(x)}{\wphi_0(x)}$ must exist in the extended real numbers (as $(\widehat \theta_0/\widehat \varphi_0)'= -\widehat\varphi^{-2}_0$). Hence, using L'H\^opital's rule and definitions \eqref{wtNP}, \eqref{wphiNP}, as $\psi$ is assumed to be nonprincipal, we obtain 
    \begin{align*}
        0 = \lim_{x \to a^+} \frac{\theta_1(x)}{\varphi_0(x)} = \lim_{x \to a^+} \frac{\Bigg(\cfrac{\theta_1(x)}{\psi(x)}\Bigg)'}{\Bigg(\cfrac{\varphi_0(x)}{\psi(x)}\Bigg)'}= \lim_{x \to a^+} \frac{A_\psi \theta_1(x)}{A_\psi \varphi_0(x)} =  \lim_{x \to a^+} \frac{\wt_0(x)}{\wphi_0(x)}.
    \end{align*}
    Note that $\lim_{x \to a^+} \frac{\theta_1(x)}{\psi(x)} = \lim_{x \to a^+} \frac{\varphi_0(x)}{\psi(x)} = 0$, allowing us to use L'H\^opital. Hence, $\wt$ is principal and $\wphi$ is nonprincipal for all $z \in \C$ by Proposition \ref{PropRatio}. 

    $(ii)$ This case is essentially identical to the previous one with the roles of $\varphi$ and $\theta$ interchanged. First let us assume that $\psi$ is nonprincipal, that is,~$\psi = \theta_0$. Then necessarily $a$ is in the limit point case as otherwise, we are in the previous case $(i)$. Thus we have $\ell_a \geq 1$ and we obtain
    \begin{align*}
        0 = \lim_{x \to a^+} \frac{\varphi_0(x)}{\theta_1(x)} = \lim_{x \to a^+} \frac{\Bigg(\cfrac{\varphi_0(x)}{\psi(x)}\Bigg)'}{\Bigg(\cfrac{\theta_1(x)}{\psi(x)}\Bigg)'}= \lim_{x \to a^+} \frac{A_\psi \varphi_0(x)}{A_\psi \theta_1(x)} = \lim_{x \to a^+} \frac{\wphi_0(x)}{\wt_0(x)}.
    \end{align*}
    In case $\psi$ is principal, that is,~$\psi = \varphi_0$ we obtain
    \begin{align*}
        0 = \lim_{x \to a^+} \frac{\varphi_1(x)}{\theta_0(x)} = \lim_{x \to a^+} \frac{\Bigg(\cfrac{\varphi_1(x)}{\psi(x)}\Bigg)'}{\Bigg(\cfrac{\theta_0(x)}{\psi(x)}\Bigg)'}
= \lim_{x \to a^+} \frac{A_\psi \varphi_1(x)}{A_\psi \theta_0(x)} = \lim_{x \to a^+} \frac{\wphi_0(x)}{\wt_0(x)}.
    \end{align*}
    Similarly to before, it follows from Proposition \ref{PropRatio} that $\wt$ is nonprincipal and $\wphi$ is principal for all $z \in \C$. 
\end{proof}
As a corollary we now obtain the following.
\begin{corollary}\label{cor:cannorm}
     Assume Hypothesis \ref{HypoInt}, that Hypothesis \ref{Hypothesis} holds at $x=a$,  and let $\varphi$, $\theta$ be naturally normalized with either $\psi = \varphi_0$ or $\psi = \theta_0$. Then 
     \begin{align*}
         \lim_{x \to a^+} \frac{\wphi_{n+1}(x)}{\wphi_n(x)} = 0, \qquad \lim_{x \to a^+} \frac{\wt_{n+1}(x)}{\wt_n(x)} = 0, \qquad n \in \mathbb N_0.
     \end{align*}
     In particular, the system $\widehat \theta$, $\widehat \varphi$ is naturally normalized.
\end{corollary}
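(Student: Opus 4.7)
The plan is to deduce each limit in the corollary by reducing a ratio of transformed coefficients $\widehat\varphi_n$ or $\widehat\theta_n$ to a ratio of coefficients of the original naturally normalized system, via L'H\^opital's rule in the form of Lemma \ref{LemlH2}. The key identity is
\[
\frac{A_\psi f}{A_\psi g} = \frac{(f/\psi)'}{(g/\psi)'},
\]
which makes Lemma \ref{LemlH2} directly applicable, and the ratio limits in the original system are already in hand from Theorem \ref{Theorem2} (together with the existence statement for ratios in $\R\cup\{\pm\infty\}$ recorded after \eqref{threeLimits}).

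In Case 1 ($\psi = \varphi_0$), by construction $\widehat\varphi_n = A_\psi\varphi_{n+1}$ and $\widehat\theta_n = A_\psi\theta_n$, so
\[
\frac{\widehat\varphi_{n+1}(x)}{\widehat\varphi_n(x)} = \frac{A_\psi\varphi_{n+2}(x)}{A_\psi\varphi_{n+1}(x)}, \qquad \frac{\widehat\theta_{n+1}(x)}{\widehat\theta_n(x)} = \frac{A_\psi\theta_{n+1}(x)}{A_\psi\theta_n(x)},
\]
and Lemma \ref{LemlH2} reduces the two limits as $x \to a^+$ to $\lim \varphi_{n+2}/\varphi_{n+1}$ and $\lim \theta_{n+1}/\theta_n$, both of which vanish by Theorem \ref{Theorem2}. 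Case 2 ($\psi = \theta_0$) is entirely parallel, giving $\lim \varphi_{n+1}/\varphi_n = 0$ and $\lim \theta_{n+2}/\theta_{n+1} = 0$.

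The main bookkeeping is verifying the hypotheses of Lemma \ref{LemlH2}: either both $f/\psi,\, g/\psi \to 0$, or $g/\psi \to \pm\infty$, plus existence of the limit of $A_\psi f/A_\psi g$ in the extended reals. The first two alternatives can be read off the hierarchy \eqref{gg}: when $\psi = \varphi_0$, one has $\varphi_k/\psi \to 0$ for $k \geq 1$, $\theta_n/\psi \to \pm\infty$ for $n \leq \ell_a$, and $\theta_n/\psi \to 0$ for $n > \ell_a$; the case $\psi = \theta_0$ is symmetric. Existence of the $A_\psi$-ratio limit follows because $(A_\psi f/A_\psi g)'$ is a Wronskian quotient of solutions to $\widehat\tau y = zy$, which is nonoscillatory at $a$ by the argument at the start of the proof of Proposition \ref{PropRatio}.

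Assembling the conditions of Definition \ref{DefThetaNorm} for $(\widehat\varphi, \widehat\theta)$: the Wronskian identity $W(\widehat\theta, \widehat\varphi) = 1$ is \eqref{HatWronski}; the $z$-independent leading normalization is Proposition \ref{PropRatio}; principality of $\widehat\varphi$ (or, in the exceptional limit circle subcase of Proposition \ref{PropPhiTheta}(i), of $\widehat\theta$) is Proposition \ref{PropPhiTheta}; and the decay $\widehat\theta_n/\widehat\varphi_0 \to 0$ for $n > \widehat\ell_a$ follows by telescoping the two ratio limits just proved, exactly as in the derivation of \eqref{gg} from Theorem \ref{Theorem2}. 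The only real subtlety I anticipate is the exceptional subcase where principality swaps; there the statement must be read with the relabeled pair $(-\widehat\theta, \widehat\varphi)$, which preserves the Wronskian value and leaves all the ratio limits intact.
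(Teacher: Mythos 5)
Your proposal is correct and follows essentially the same route as the paper's proof: both reduce the ratios $\wphi_{n+1}/\wphi_n$ and $\wt_{n+1}/\wt_n$ to the ratios $\varphi_{n+2}/\varphi_{n+1}$, $\theta_{n+1}/\theta_n$ (resp.\ the index-shifted versions in the nonprincipal case) via Lemma \ref{LemlH2}, invoke Theorem \ref{Theorem2} and \eqref{threeLimits} for the vanishing and for the applicability of L'H\^opital, and treat the two cases for $\psi$ symmetrically. Your discussion of the ``naturally normalized'' conclusion is in fact more explicit than the paper's (which leaves it implicit), though note that condition $(iii)$ of Definition \ref{DefThetaNorm} for the hatted system requires the mixed limits $\wt_n/\wphi_0$, which need the cross-family comparisons from Theorem \ref{Theorem2} run through the same L'H\^opital argument, not just telescoping of the two within-family ratios.
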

\begin{proof}
First let us assume that $\psi$ is principal, that is,~$\psi= \varphi_0$. Then we compute using \eqref{threeLimits} and Lemma \ref{LemlH2},
\begin{align*}
    0=\lim_{x \to a^+} \frac{\varphi_{n+2}(x)}{\varphi_{n+1}(x)} = \lim_{x \to a^+} \frac{\Bigg(\cfrac{\varphi_{n+2}(x)}{\psi(x)}\Bigg)'}{\Bigg(\cfrac{\varphi_{n+1}(x)}{\psi(x)}\Bigg)'} = \lim_{x \to a^+} \frac{A_\psi \varphi_{n+2}(x)}{A_\psi \varphi_{n+1}(x)} =  \lim_{x \to a^+} \frac{\wphi_{n+1}(x)}{\wphi_n(x)},
\end{align*}
and 
\begin{align*}
    0=\lim_{x \to a^+} \frac{\theta_{n+1}(x)}{\theta_{n}(x)} = \lim_{x \to a^+} \frac{\Bigg(\cfrac{\theta_{n+1}(x)}{\psi(x)}\Bigg)'}{\Bigg(\cfrac{\theta_{n}(x)}{\psi(x)}\Bigg)'} = \lim_{x \to a^+} \frac{A_\psi \theta_{n+1}(x)}{A_\psi \theta_{n}(x)} =  \lim_{x \to a^+} \frac{\wt_{n+1}(x)}{\wt_n(x)}.
\end{align*}
Note that for $n \leq \ell_a$ we indeed have $\lim_{x \to a^+} \frac{\theta_{n}(x)}{\psi(x)} = \pm \infty$, while for $n > \ell_a$ we have $\lim_{x \to a^+} \frac{\theta_{n}(x)}{\psi(x)} = \lim_{x \to a^+} \frac{\theta_{n+1}(x)}{\psi(x)} = 0$, allowing us to use Lemma~\ref{LemlH2}.

The case of $\psi$ nonprincipal, that is,~$\psi= \theta_0$ can be shown analogously.
\end{proof}

\section{Regularization via Darboux transforms}\label{SectSpec}
The goal of the present section is to demonstrate how applying a series of Darboux transforms can, in certain cases, be viewed as a type of regularization procedure. More precisely, we will say that a Schr\"odinger differential expression $\tau$ is `regularizable via Darboux transforms at $a$' if and only if there exists a finite sequence of Darboux transforms 
\begin{align*}
    \tau\to \widehat \tau_1 \to \dots \to \widehat \tau_N = \widetilde \tau
\end{align*}
 such that $\widetilde \tau$ is in the limit circle case at $x = a$.

It turns out that being regularizable via Darboux transforms at $a$ is equivalent to $\ell_a$ being finite (see Theorem \ref{Thm:Transformation} below). The key to this observation is the following theorem which shows that the regularization index is well-behaved under Darboux transforms. For this, recall that $\widehat \tau$ will satisfy Hypothesis \ref{Hypothesis} in case $\tau$ satisfies it (see Proposition  \ref{PropRatio}), and thus will have a well-defined regularization index $\widehat \ell_a\in \mathbb{N}_0 \cup \lbrace \infty \rbrace$.
\begin{theorem}\label{Thm:Darboux}
    Assume Hypothesis \ref{HypoInt} and that Hypothesis \ref{Hypothesis} holds for $\tau$ at the endpoint $x=a$. If $\ell_a$ is the regularization index of $\tau$ at $x=a$ then the regularization index $\widehat \ell_a$ of $\widehat \tau$ at $x=a$ satisfies $($where we interpret $\infty\pm1$ as $\infty$$)$
    \begin{enumerate}
        \item $\widehat \ell_a = \ell_a + 1$ if the seed function $\psi$ is principal at $x=a$,
        \\
        \item and \begin{align*}
            \widehat \ell_a = \begin{cases}
                0, & \text{if } \ell_a = 0,
                \\
                \ell_a - 1, & \text{if } \ell_a > 0,
            \end{cases}
         \end{align*}
            if the seed function $\psi$ is nonprincipal at $x=a$.

    \end{enumerate}
\end{theorem}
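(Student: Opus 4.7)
The plan is to combine the explicit formulas $\widehat\varphi_n, \widehat\theta_n$ in terms of $A_\psi$ acting on $\varphi_k, \theta_k$ (as defined immediately before Subsection \emph{Properties of $\widehat\varphi,\widehat\theta$}) with the full chain of asymptotic dominations
\begin{equation*}
|\theta_0(x)| \gg \cdots \gg |\theta_{\ell_a}(x)| \gg |\varphi_0(x)| \gg |\theta_{\ell_a+1}(x)| \gg |\varphi_1(x)| \gg |\theta_{\ell_a+2}(x)| \gg \cdots
\end{equation*}
from Theorem \ref{Theorem2} (and its extension in \eqref{threeLimits}). The bridge between the original and Darboux-transformed systems will be the L'H\^opital identity
\begin{equation*}
\lim_{x\to a^+}\frac{f(x)}{g(x)} \,=\, \lim_{x\to a^+}\frac{A_\psi f(x)}{A_\psi g(x)}
\end{equation*}
from Lemma \ref{LemlH2}, which I can invoke here because Proposition \ref{PropRatio} and Corollary \ref{cor:cannorm} already guarantee that $\widehat\varphi,\widehat\theta$ form a naturally normalized system and that all relevant limits exist in the extended reals; so the only thing left is to read off \emph{which} ratio in the $\gg$-chain corresponds to $\widehat\varphi_0/\widehat\theta_n$.

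For case $(i)$ ($\psi=\varphi_0$ principal), $\widehat\varphi$ is principal and $\widehat\theta$ is nonprincipal by Proposition \ref{PropPhiTheta}$(ii)$, with $\widehat\varphi_0=A_\psi\varphi_1$ and $\widehat\theta_n=A_\psi\theta_n$. Apply L'H\^opital (checking the hypothesis in each subcase using that $\varphi_1\ll\varphi_0=\psi$ and that $\theta_n/\psi$ is either $0$ or $\pm\infty$ depending on whether $n>\ell_a$ or $n\le\ell_a$) to get $\lim_{x\to a^+} \widehat\varphi_0/\widehat\theta_n = \lim_{x\to a^+} \varphi_1/\theta_n$. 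The displayed $\gg$-chain then gives this ratio as $0$ precisely for $n\le\ell_a+1$, and nonzero (in fact divergent) for $n=\ell_a+2$; hence $\widehat\ell_a=\ell_a+1$. The convention $\infty+1=\infty$ is consistent with the same formula.

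For case $(ii)$ with $\ell_a>0$ and $\psi=\theta_0$ nonprincipal, Proposition \ref{PropPhiTheta}$(ii)$ again gives $\widehat\varphi$ principal and $\widehat\theta$ nonprincipal, but now $\widehat\varphi_n=A_\psi\varphi_n$ and $\widehat\theta_n=A_\psi\theta_{n+1}$. The same L'H\^opital argument yields $\lim_{x\to a^+}\widehat\varphi_0/\widehat\theta_n=\lim_{x\to a^+}\varphi_0/\theta_{n+1}$, so the index shifts by $-1$, giving $\widehat\ell_a=\ell_a-1$ (with $\infty-1=\infty$). The remaining subcase $\ell_a=0$ with $\psi=\theta_0$ is the only one where the roles of principal/nonprincipal swap (Proposition \ref{PropPhiTheta}$(i)$): one must then compute the regularization index of $\widehat\tau$ using $\widehat\theta$ as the principal solution and $\widehat\varphi$ as the nonprincipal one. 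The same L'H\^opital trick gives $\lim_{x\to a^+}\widehat\theta_0/\widehat\varphi_n=\lim_{x\to a^+}\theta_1/\varphi_n$, which by the $\gg$-chain with $\ell_a=0$ is zero for $n=0$ but $\pm\infty$ for $n=1$, so $\widehat\ell_a=0$.

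The main technical obstacle is just the bookkeeping around Lemma \ref{LemlH2}: one must verify, for every application, that either both ratios with $\psi$ tend to $0$ or the denominator ratio with $\psi$ tends to $\pm\infty$, which requires splitting into the ranges $n\le\ell_a$ and $n>\ell_a$. A secondary care point is the role-swap in subcase $\ell_a=0$, $\psi$ nonprincipal, where one must not forget to read the definition of the regularization index with $\widehat\theta$ in place of $\widehat\varphi$; once that is done the proof is essentially a reading off of Theorem \ref{Theorem2}.
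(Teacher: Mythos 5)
Your proposal is correct and follows essentially the same route as the paper's proof: the index shift built into the definitions of $\widehat\varphi_n$, $\widehat\theta_n$, transported through Lemma \ref{LemlH2} (whose hypotheses you verify by the same split into $n\le\ell_a$ versus $n>\ell_a$), read off against the domination chain of Theorem \ref{Theorem2}, with the role-swap subcase $\ell_a=0$, $\psi$ nonprincipal handled separately exactly as in the paper.
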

\begin{proof}
    Let us choose a naturally normalized system $\varphi$, $\theta$ and let $k > 0$, $m \geq 0$ if $\psi$ is principal, and $k \geq 0$, $m > 0$ if $\psi$ is nonprincipal. Then using L'H\^opital's rule we can compute
    \begin{align*}
        \lim_{x \to a^+} \frac{\varphi_{k}(x)}{\theta_{m}(x)} = \lim_{x \to a^+} \frac{\Bigg(\cfrac{\varphi_{k}(x)}{\psi(x)}\Bigg)'}{\Bigg(\cfrac{\theta_{m}(x)}{\psi(x)}\Bigg)'} = \lim_{x \to a^+} \frac{A_\psi \varphi_{k}(x)}{A_\psi \theta_{m}(x)} =  \lim_{x \to a^+} \frac{\wphi_{k-\delta_1}(x)}{\wt_{m-\delta_2}(x)},
    \end{align*}
    where $\delta_1 = 1$, $\delta_2  = 0$ if $\psi$ is principal, and $\delta_1 = 0$, $\delta_2  = 1$ if $\psi$ is nonprincipal. Note that the requirements for $k, m$ guarantee that we are in the setting of Lemma~\ref{LemlH2}. Thus, in case $\wt$ is nonprincipal, the regularization index of $\widehat \tau$ is given by $\widehat \ell_a = \ell_a+\delta_1-\delta_2$ by Definition \ref{DefRI}. In case $\wt$ is principal (so $\wphi$ is nonprincipal), which can only happen if $\psi$ is nonprincipal and $\ell_a = 0$, we have $\lim_{x \to a^+} \frac{\wphi_{0}(x)}{\wt_{0}(x)} = \lim_{x \to a^+} \frac{\varphi_{0}(x)}{\theta_{1}(x)} = \pm \infty$ and $\lim_{x \to a^+} \frac{\wphi_{1}(x)}{\wt_{0}(x)} = \lim_{x \to a^+} \frac{\varphi_{1}(x)}{\theta_{1}(x)} = 0$. Thus $\widehat \ell_a = 0$, finishing the proof.
\end{proof}
We can now prove the following result stating that Hypothesis \ref{Hypothesis} with $\ell_a < \infty$ is necessary to transform a Schr\"odinger differential expression into one which is in the limit circle case at $a$ through a finite number of Darboux transforms. This then gives us a complete characterization of those Schr\"odinger differential expressions which can be regularized via Darboux transforms. 

\begin{theorem}\label{Thm:Transformation}
    Assume Hypothesis \ref{HypoInt} and let $\tau$ be a Schr\"odinger differential expression which is nonoscillatory at both endpoints. Then $\tau$ can be transformed via a finite series of Darboux transforms to a Schr\"odinger differential expression $\widetilde \tau$ which is in the limit circle case at $x = a$ if and only if Hypothesis \ref{Hypothesis} holds at $x=a$ and $\ell_a < \infty$. In this case, the minimal number of Darboux transforms is $\ell_a$ and is achieved if the seed functions are always chosen to be nonprincipal at $x = a$.
\end{theorem}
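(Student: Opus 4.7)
The plan is to combine Theorem~\ref{Thm:N=0}, which identifies the limit circle case with $\ell_a = 0$, and Theorem~\ref{Thm:Darboux}, which tracks $\ell_a$ under Darboux transforms. Three things need to be established: sufficiency ($\ell_a < \infty$ implies regularizability), necessity (regularizability implies Hypothesis~\ref{Hypothesis} together with $\ell_a < \infty$), and minimality (the optimal number of transforms equals $\ell_a$).

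For sufficiency I would proceed by induction on $\ell_a$. The base case $\ell_a = 0$ is immediate from Theorem~\ref{Thm:N=0}: $\tau$ is already in the limit circle case at $a$, and the empty sequence of transforms works. For $\ell_a > 0$, nonoscillation of $\tau$ at both endpoints guarantees the existence of a strictly positive solution $\psi$ of $\tau y = \lambda y$ for suitable real $\lambda$; by shifting $\psi$ by a large multiple of the principal solution $\varphi_0$ at $a$ (which is negligible compared to any nonprincipal solution near $a$), one may further arrange $\psi$ to be nonprincipal at $a$ while staying positive on $(a,b)$. Then Proposition~\ref{PropRatio} supplies Hypothesis~\ref{Hypothesis} for $\widehat\tau$, and Theorem~\ref{Thm:Darboux}(ii) yields $\widehat\ell_a = \ell_a - 1$. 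Iterating this step $\ell_a$ times produces an expression of regularization index $0$, which is in the limit circle case at $a$ by Theorem~\ref{Thm:N=0}.

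For necessity, given a chain $\tau \to \widehat\tau_1 \to \cdots \to \widehat\tau_N = \widetilde\tau$ with $\widetilde\tau$ in the limit circle case at $a$, the key tool is the invertibility of Darboux transforms: if $\widehat\tau_{i+1}$ is obtained from $\widehat\tau_i$ with positive seed $\psi$ at level $\lambda$, a direct computation shows that $1/\psi$ is a positive solution of $\widehat\tau_{i+1} y = \lambda y$, and Darboux-transforming $\widehat\tau_{i+1}$ with seed $1/\psi$ recovers $\widehat\tau_i$. Since Hypothesis~\ref{Hypothesis} holds trivially at limit circle endpoints, applying Proposition~\ref{PropRatio} along this reverse chain propagates Hypothesis~\ref{Hypothesis} from $\widetilde\tau$ backwards all the way to $\tau$. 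With Hypothesis~\ref{Hypothesis} for $\tau$ now in hand, Theorem~\ref{Thm:Darboux} applies to each of the $N$ forward steps, and since each step changes $\ell_a$ by at most one, $\ell_a \leq \widetilde\ell_a + N = N < \infty$.

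Minimality then follows from the same one-step bound $|\widehat\ell_a - \ell_a| \leq 1$ furnished by Theorem~\ref{Thm:Darboux}: any chain bringing $\ell_a$ to $0$ requires at least $\ell_a$ transforms, and the construction in the sufficiency part realizes this bound by always choosing nonprincipal seeds. The main technical point I anticipate is justifying the inductive step in sufficiency, namely that after each Darboux transform one can again find a positive nonprincipal seed so the procedure continues; this follows from standard properties of nonoscillatory Schr\"odinger expressions combined with the asymptotic bookkeeping already developed in Section~\ref{SectDarb}.
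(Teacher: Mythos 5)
Your overall architecture (Theorem \ref{Thm:N=0} for the base case, Theorem \ref{Thm:Darboux} for the index bookkeeping, the one-step bound for minimality) is exactly the paper's, and the sufficiency and minimality parts match the published proof. Your necessity argument, however, takes a genuinely different route: the paper argues \emph{forward}, showing directly via L'H\^opital's rule applied to the nonprincipal solutions (together with Theorem \ref{TFAE}) that if Hypothesis \ref{Hypothesis} fails for $\tau$ then it fails for $\widehat\tau$, whence by induction every iterate stays limit point; you instead invert each Darboux transform (seed $1/\psi$) and propagate Hypothesis \ref{Hypothesis} \emph{backward} from the limit circle endpoint of $\widetilde\tau$ using Proposition \ref{PropRatio}. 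Your version is valid --- $B_\psi(1/\psi)=0$ does recover the original expression, $1/\psi>0$, and Proposition \ref{PropRatio} applies once the naturally normalized system for $\widehat\tau_{i+1}$ is adjusted (scaling, or adding a constant multiple of $\varphi$, which preserves natural normalization by Remark \ref{RemThetaUnique}) so that the seed equals $\varphi_0$ or $\theta_0$ --- but it is heavier machinery than the paper's one-line contrapositive, and you should note explicitly that the case ``Hypothesis \ref{Hypothesis} holds but $\ell_a=\infty$'' is excluded by the convention $\infty\pm 1=\infty$ in Theorem \ref{Thm:Darboux}, which forces $\widetilde\ell_a=\infty\neq 0$.

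One step is actually wrong as written: you claim that a positive solution $\psi$ can be made nonprincipal at $a$ ``by shifting $\psi$ by a large multiple of the principal solution $\varphi_0$.'' Adding a multiple of the principal solution to a principal solution yields another principal solution (principal solutions are unique up to constants), so this shift accomplishes nothing precisely in the case where your chosen positive $\psi$ happens to be principal at $a$; to convert a principal solution into a nonprincipal one you must add a nonprincipal component, and then positivity on all of $(a,b)$ is no longer automatic. The paper sidesteps this with the explicit seed \eqref{seedFunction}, $\psi_1(x)=\varphi_a(\lambda_1,x)\big[\int_x^{b}\frac{dt}{p(t)\varphi_a^2(\lambda_1,t)}+1\big]$ for $\lambda_1<\inf\sigma(T)$, which is manifestly positive and nonprincipal at both endpoints (nonprincipality at $b$ matters for the $m$-function analysis and for Corollary \ref{CorDarb}). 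Replace your shift argument with this construction and the inductive step closes; the rest of the proof stands.
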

\begin{proof}
    First, let us remark that as $\tau$ is assumed to be nonoscillatory at both endpoints, nonvanishing seed functions can always be found. Indeed, as we will see in \eqref{seedFunction}, one can explicitly write down such seed functions which are nonprincipal at both endpoints. 
    Hence, it follows from Theorem \ref{Thm:Darboux} that if $\ell_a < \infty$ there exists a sequence of $\ell_a$ many Darboux transforms leading to a $\widetilde \tau$ which is in the limit circle case at $x = a$. It is also immediate from Theorem \ref{Thm:Darboux} that no smaller number of Darboux transforms will achieve this.
    
    It remains to show that if Hypothesis \ref{Hypothesis} does not hold for $\tau$, then it also does not hold for its Darboux transform $\widehat \tau$.
    
Let us now assume that the seed function $\psi$ is principal (the other case is proven in a similar manner). Then using L'H\^opital's rule we obtain
    \begin{align*}
        \lim_{x \to a^+} \frac{\theta(z_1,x)}{\theta(z_2,x)} = \lim_{x \to a^+} \frac{\Bigg(\cfrac{\theta(z_1, x)}{\psi(x)}\Bigg)'}{\Bigg(\cfrac{\theta(z_2, x)}{\psi(x)}\Bigg)'} = \lim_{x \to a^+} \frac{A_\psi \theta(z_1, x)}{A_\psi \theta(z_2, x)} =  \lim_{x \to a^+} \frac{\wt(z_1, x)}{\wt(z_2, x)}.
    \end{align*}
 Now it follows from Theorem \ref{TFAE} that if Hypothesis \ref{Hypothesis} does not hold for $\tau$, it will also not hold for $\widehat \tau$. By induction, the same is true for any $\widetilde \tau$ obtained from $\tau$ through a finite series of Darboux transform, implying that $\widetilde \tau$ must remain in the limit point case at $x=a$. This finishes the proof.
\end{proof}
As a simple corollary we also obtain.
\begin{corollary}\label{CorDarb}
    A Schr\"odinger differential expression $\tau$ which is nonoscillatory at both endpoints can be transformed $($via Darboux transforms$)$ to one which is in the limit circle case at both endpoints if and only if Hypothesis \ref{Hypothesis} holds and $\ell_a, \ell_b < \infty$. In this case the minimal number of Darboux transforms is $\max\{\ell_a, \ell_b\}$ and is achieved by choosing seed functions which are nonprincipal at both endpoints.
\end{corollary}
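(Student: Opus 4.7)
My plan is to reduce the proof to the single-endpoint case already established in Theorem~\ref{Thm:Transformation}, together with a careful bookkeeping of how the two regularization indices $\ell_a$ and $\ell_b$ evolve under successive Darboux transforms, as quantified by Theorem~\ref{Thm:Darboux}. For the necessity direction, if a finite sequence of Darboux transforms carries $\tau$ to some $\widetilde\tau$ which is in the limit circle case at both endpoints, then it is in particular in the limit circle case at $x=a$, so Theorem~\ref{Thm:Transformation} yields that Hypothesis~\ref{Hypothesis} holds at $a$ with $\ell_a<\infty$; the symmetric argument at $x=b$ gives $\ell_b<\infty$.

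For the sufficiency I would proceed by induction on $N=\max\{\ell_a,\ell_b\}$. The base case $N=0$ holds trivially by Theorem~\ref{Thm:N=0}. For the inductive step I would pick a seed $\psi>0$ solving $\tau\psi=\lambda\psi$ that is nonprincipal at both endpoints, and apply the associated Darboux transform to produce $\widehat\tau$. By Theorem~\ref{Thm:Darboux}(ii) applied at each endpoint separately,
\begin{align*}
    \widehat\ell_a = \max\{\ell_a-1,0\}, \qquad \widehat\ell_b = \max\{\ell_b-1,0\},
\end{align*}
so $\max\{\widehat\ell_a,\widehat\ell_b\}=N-1$. The Darboux transform $\widehat\tau$ remains nonoscillatory at both endpoints (as established in the course of the proof of Proposition~\ref{PropRatio}) and satisfies Hypothesis~\ref{Hypothesis} at both endpoints (by Proposition~\ref{PropRatio} applied at each endpoint), so the induction hypothesis produces a further chain of $N-1$ Darboux transforms terminating in the limit circle case at both endpoints. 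Concatenating gives a chain of length $N=\max\{\ell_a,\ell_b\}$.

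For minimality I would observe that Theorem~\ref{Thm:Darboux} forces $|\widehat\ell_c-\ell_c|\leq 1$ for $c\in\{a,b\}$ after every single Darboux transform. Hence in any chain $\tau\to\widehat\tau_1\to\cdots\to\widehat\tau_M=\widetilde\tau$ ending in a $\widetilde\tau$ that is in the limit circle case at $x=a$, the terminal value of the index at $a$ must equal $0$ by Theorem~\ref{Thm:N=0}, which forces $M\geq\ell_a$; the same reasoning at $b$ gives $M\geq\ell_b$, whence $M\geq\max\{\ell_a,\ell_b\}$, matching the upper bound constructed above.

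The main subtlety I expect is justifying that at every step of the induction one can actually choose a seed function that is simultaneously nonprincipal at both endpoints. For the original $\tau$ this is secured by an explicit construction referenced in the proof of Theorem~\ref{Thm:Transformation}, and one needs to check that the same construction can be carried out for each intermediate $\widehat\tau_j$. Since nonoscillatority at both endpoints is preserved by Darboux transformation and Hypothesis~\ref{Hypothesis} continues to hold at both endpoints, positive solutions continue to exist; the only thing to verify is that among them one can always select a representative that is nonprincipal at both endpoints. This should follow from the same construction used for $\tau$ itself, but it is the genuinely delicate point since principality is the subtle asymptotic property that Theorem~\ref{Thm:Darboux} is tracking.
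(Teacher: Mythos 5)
Your proof is correct and follows exactly the route the paper intends: the paper states this as ``a simple corollary'' of Theorems \ref{Thm:Darboux} and \ref{Thm:Transformation} without writing out the details, and your induction on $\max\{\ell_a,\ell_b\}$ plus the step-counting lower bound is precisely that omitted argument. The one point you flag as delicate --- existence at every intermediate stage of a seed nonprincipal at both endpoints --- is handled by the explicit construction \eqref{seedFunction}, which applies verbatim to each $\widehat\tau_j$ because nonoscillation at both endpoints, semiboundedness of the self-adjoint realizations, and Hypothesis \ref{Hypothesis} are all preserved under the Darboux transform (Propositions \ref{PropRatio} and \ref{PropPhiTheta}), so one may simply choose $\lambda_{j+1}$ below the spectrum accumulated so far at each step.
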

In other words, $\tau$ is regularizable via Darboux transforms at both endpoints if and only if $\ell_a, \ell_b < \infty$.

We should remark that while these results are proven for Schr\"odinger differential expressions, analogous statements can be made for general Sturm--Liouville differential expressions satisfying Hypothesis \ref{HypoDarb} through the use of the Liouville transform which leaves the regularization index invariant.

To justify the terminology `regularization via Darboux transforms' we recall that Darboux transforms as above can change the spectrum only at the value of the spectral parameter of the seed function, provided correct boundary conditions are specified (see \cite{D78}). In particular, self-adjoint realizations of $\tau$ and self-adjoint realizations of its Darboux transform $\widehat \tau$ will have in general similar spectral properties. Thus we expect problems having finite regularization indices to behave similarly to regular problems. As an example, we say that $\tau$ satisfies Weyl asymptotics if and only if every self-adjoint realization $T$ of $\tau$ has a discrete spectrum  such that the eigenvalues additionally satisfy the asymptotics
\begin{equation}\label{eq:Weyl}
\lambda_{n}\underset{n\to\infty}{\sim} \pi^2 n^2 \left(\int_{a}^{b}\sqrt{\frac{r(t)}{p(t)}}dt\right)^{-2}, \qquad \sigma(T) = \lbrace \lambda_n \rbrace_{n=1}^{\infty}.
\end{equation}
Here it is implicitly assumed that the integral above is finite.

It is known that $\tau$ satisfies Weyl asymptotics if it is regular, or more generally limit circle nonoscillatory at both endpoints (see \cite[Remark 3.1, Lem.~3.5(3)]{NZ92}). Hence, we obtain from Corollary \ref{CorDarb} an elementary proof of the following fact.
\begin{corollary}
Assume Hypotheses \ref{HypoInt} and \ref{HypoDarb}. If Hypothesis \ref{Hypothesis} is satisfied at $x=a, b$ with $\ell_a,\ell_b<\infty$, then the Weyl asymptotics \eqref{eq:Weyl} hold.
\end{corollary}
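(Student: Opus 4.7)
The plan is to reduce to the known Weyl asymptotics in the limit circle case via a two-step procedure: first the Liouville transform to put $\tau$ in Schr\"odinger form, then Corollary \ref{CorDarb} to reach the limit circle case at both endpoints. Under Hypothesis \ref{HypoDarb}, the Liouville transform \eqref{Def:AB} gives a unitary equivalence between self-adjoint realizations of $\tau$ on $(a,b;r(x)\,dx)$ and those of $\tau_S = -d^2/dX^2 + Q(X)$ on $(A,B;dX)$; as noted after \eqref{Def:AB}, it preserves the regularization indices at both endpoints, and the assumption $\int_a^b \sqrt{r/p}\,dt < \infty$ translates to $B - A < \infty$.

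Since $\ell_a,\ell_b < \infty$ for $\tau_S$, Corollary \ref{CorDarb} provides a finite chain of Darboux transforms $\tau_S \to \widehat{\tau}_{S,1} \to \cdots \to \widehat{\tau}_{S,N} = \widetilde{\tau}_S$ with $N = \max\{\ell_a, \ell_b\}$, ending in $\widetilde{\tau}_S$ limit circle nonoscillatory at both endpoints of the same interval $(A,B)$. By the Niessen--Zettl result cited immediately before the statement (\cite[Rem.~3.1, Lem.~3.5(3)]{NZ92}), every self-adjoint realization $\widetilde T$ of $\widetilde{\tau}_S$ has purely discrete spectrum with $\widetilde\lambda_n \sim \pi^2 n^2 / (B-A)^2$. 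A single Darboux transform with seed function at spectral parameter $\mu \in \mathbb{R}$ changes the spectrum of a suitably matched self-adjoint realization by at most one eigenvalue, located at $z = \mu$; this is the Darboux spectral identity alluded to in the paragraph preceding \eqref{eq:Weyl} and justified in \cite{D78}. Iterating $N$ times, the spectrum of any self-adjoint realization of $\tau_S$ differs from that of a corresponding realization of $\widetilde{\tau}_S$ by at most $N$ eigenvalues; this $O(1)$ perturbation preserves the quadratic asymptotic. Transporting back via the Liouville transform, under which $(B-A)^{-2}$ becomes $\big(\int_a^b \sqrt{r/p}\,dt\big)^{-2}$, yields \eqref{eq:Weyl} for $\tau$.

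The main obstacle is making the ``spectrum differs by at most one eigenvalue per Darboux transform'' step precise, since along the chain one eventually enters the limit circle regime where boundary conditions must be tracked through the intertwiner $A_\psi$. One must verify that for each self-adjoint realization of $\widehat{\tau}_{S,k-1}$ there is a self-adjoint realization of $\widehat{\tau}_{S,k}$ whose domain is related to the former's by $A_\psi$ up to a finite-dimensional discrepancy, so that the spectra agree up to at most one eigenvalue. Since only the leading quadratic growth is required for \eqref{eq:Weyl}, any consistent choice of realization along the chain works, and the boundary-condition matching affects only an $O(1)$ shift of indices that is absorbed harmlessly into the asymptotic.
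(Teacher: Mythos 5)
Your proposal follows essentially the same route as the paper: Liouville transform to Schr\"odinger form, the Darboux chain of Corollary \ref{CorDarb}, the Niessen--Zettl asymptotics for the limit circle nonoscillatory case, and the observation that the chain perturbs the spectrum by only finitely many eigenvalues. The one step you flag as the main obstacle --- tracking boundary conditions and the spectral identity through the intertwiners $A_\psi$ --- is exactly what the paper's Proposition \ref{Prop:Nevanlinna2} supplies: the explicit $m$-function recursion through the chain (with seed functions chosen nonprincipal at both endpoints and below the spectrum) shows $\sigma(\widehat T_N)=\sigma(T)\cup\{\lambda_j\}_{j=1}^{\ell}\cup\{\lambda_n\}_{n\in S}$, i.e.\ exactly $\lfloor(\ell_a+\ell_b+1)/2\rfloor$ added eigenvalues, so your $O(1)$ claim is discharged by citing that result rather than by the vaguer assertion that ``any consistent choice of realization along the chain works.''
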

\begin{proof}
Note that given $\tau$ satisfying Hypothesis \ref{HypoDarb} we can transform it to an equivalent differential expression $\tau_{SF} = -\frac{d^2}{dX^2} + Q(X), \ X \in (A,B) \subseteq \mathbb R$
in  Schr\"odinger form via the Liouville transform. A similar isospectral transformation $T \to T_{SF}$ with $\sigma(T) = \sigma(T_{SF})$ holds for self-adjoint realizations of $\tau$ (see, e.g., \cite[Sect. 3.2]{FGKS21} for the regular case). From Prop.~\ref{Prop:Nevanlinna2}, and as the interval $(A,B)$ remains invariant under Darboux transforms, it follows that the eigenvalues of $T_{SF}$ (hence of $T$), satisfy $\lambda_n \underset{n\to\infty}{\sim} n^2 \pi^2(B-A)^{-2}$, in particular $B-A$ must be finite. But $B-A = \int_a^b \sqrt{\frac{r(t)}{p(t)}} dt$ according to \eqref{Def:AB}. This shows \eqref{eq:Weyl} finishing the proof. 
\end{proof}
We remark that the above result is not new. In fact \cite[p.~33]{WW14} implies that, provided the regularization indices are finite, Weyl asymptotics hold even without Hypothesis \ref{HypoDarb}. This result is based on \cite[Thm.~4.8]{W11}, which appears to use very different techniques compared to the present paper.  

\begin{proposition}{\emph{(}\cite[Thm.~4.8]{W11}, \cite{WW14}\emph{)}}
Assume Hypothesis \ref{HypoInt}. If Hypothesis \ref{Hypothesis} is satisfied at $x=a, b$ with $\ell_a, \ell_b <\infty$, then the Weyl asymptotics \eqref{eq:Weyl} hold.
\end{proposition}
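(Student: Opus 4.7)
The natural strategy is to reduce the statement to the preceding corollary by approximating the coefficients $p, r$ with ones satisfying Hypothesis~\ref{HypoDarb}. Because the regularization indices $\ell_a, \ell_b$ depend only on the behavior of principal and nonprincipal solutions near the endpoints, one is free to mollify $p$ and $r$ on any compact subinterval $[c,d]\subset(a,b)$ without altering them near $a$ or $b$. Choosing the mollification so that $p_\varepsilon r_\varepsilon$ and $(p_\varepsilon r_\varepsilon)'/r_\varepsilon$ are locally absolutely continuous with $p_\varepsilon r_\varepsilon > 0$, the regularized expression $\tau_\varepsilon$ satisfies Hypothesis~\ref{HypoDarb} in addition to Hypothesis~\ref{Hypothesis} at both endpoints, with the regularization indices still equal to $\ell_a, \ell_b$.

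Next I would fix compatible self-adjoint realizations $T_\varepsilon$ of $\tau_\varepsilon$, imposing the same boundary data as used for $T$ (possible since $p, r$ are unchanged near the endpoints, so the limit circle/limit point classification and any required boundary traces are unaffected). The preceding corollary then gives Weyl asymptotics for $T_\varepsilon$ with constant $\pi^2 L_\varepsilon^{-2}$, where $L_\varepsilon = \int_a^b \sqrt{r_\varepsilon/p_\varepsilon}\,dx$. By dominated convergence, $L_\varepsilon \to L := \int_a^b \sqrt{r/p}\,dx$ as $\varepsilon \to 0^+$, and a standard resolvent comparison argument on $[c,d]$ should yield norm-resolvent convergence $T_\varepsilon \to T$, hence $\lambda_n(T_\varepsilon) \to \lambda_n(T)$ for each fixed $n$.

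The main obstacle is the interchange of the limits $\varepsilon \to 0^+$ and $n \to \infty$: the preceding corollary gives the asymptotic at fixed $\varepsilon$, whereas resolvent convergence gives convergence at fixed $n$. I would attempt to close the gap via the min-max principle, sandwiching $\lambda_n(T)$ between the $n$-th eigenvalues of two comparison problems whose Weyl constants both tend to $\pi^2 L^{-2}$, but constructing such comparisons in full generality is nontrivial. An alternative route, which bypasses Hypothesis~\ref{HypoDarb} altogether, is to identify the regularization index $\ell_a$ with the canonical-systems index $\Delta$ as suggested in Remark~\ref{RemarkDelta} and then invoke Winkler's Theorem~4.8 directly; this, however, draws in the canonical-systems machinery that the present paper is deliberately written to avoid.
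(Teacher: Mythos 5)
Your main route has two genuine gaps. First, the mollification cannot be confined to a compact subinterval $[c,d]\subset(a,b)$: Hypothesis \ref{HypoDarb} demands $(pr),(pr)'/r\in AC_{loc}((a,b))$ on the \emph{whole} open interval, and under Hypothesis \ref{HypoInt} alone the coefficients may fail this regularity at points arbitrarily close to the endpoints (or indeed throughout $(a,b)$), not merely in the middle. To repair this you would have to modify $p,r$ in every neighborhood of $a$ and $b$, which is exactly where you cannot touch them without risking a change in the principal/nonprincipal solutions, hence in the validity of Hypothesis \ref{Hypothesis} and in the values of $\ell_a,\ell_b$. Second, even granting a family $\tau_\varepsilon$ satisfying Hypothesis \ref{HypoDarb} with the same indices, the interchange of $\varepsilon\to 0^+$ and $n\to\infty$ is the whole difficulty: Weyl asymptotics for each fixed $\varepsilon$ plus $\lambda_n(T_\varepsilon)\to\lambda_n(T)$ for each fixed $n$ gives nothing about $\lambda_n(T)$ as $n\to\infty$ unless the error in the asymptotics is uniform in $\varepsilon$, and you do not produce the two-sided min--max comparison that would supply this uniformity. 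You flag this obstacle yourself but do not close it, so the argument as written is not a proof.

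The route you mention only in passing at the end is in fact the paper's entire proof: the proposition is attributed to Winkler \cite[Thm.~4.8]{W11} and Winkler--Woracek \cite{WW14}, and the paper's argument consists of the single observation that $\ell_a,\ell_b<\infty$ forces the canonical-systems index $\Delta$ of \cite{LW23} to be finite (via the identification sketched in Remark \ref{RemarkDelta}), whereupon \cite[p.~33]{WW14} applies. The paper makes no attempt at an approximation argument and accepts importing the canonical-systems machinery for this one statement precisely because its own Darboux-transform proof requires the extra regularity of Hypothesis \ref{HypoDarb}. If you want a self-contained proof without that hypothesis, you would need either to carry out the $\ell$--$\Delta$ comparison in detail or to find a genuinely new argument; the mollification scheme as proposed does not work.
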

\begin{proof}
This result follows from \cite[p.~33]{WW14} since $\ell_a, \ell_b <\infty$ implies $\Delta$ used in \cite{LW23} is also finite (see Remark \ref{RemarkDelta} for instance).
\end{proof}
As our regularization procedure relies on the Liouville and Darboux transforms, Hypothesis \ref{HypoDarb} was necessary. This raises the following question:

\begin{problem}
Assume Hypothesis \ref{HypoInt}. If Hypothesis \ref{HypoDarb} does not hold, but Hypothesis \ref{Hypothesis} is satisfied at $x=a, b$ with $\ell_a,\ell_b<\infty$ and at least one index positive, can the problem still be regularized $($in the sense of transforming into an associated regular problem$)$ using a different method?
\end{problem}

Regarding Weyl asymptotics, our work relies heavily on the finiteness of the regularizations indices. However, having an infinite regularization index is still compatible with Weyl asymptotics $($see Section \ref{MiePotential} for an example$)$. This leads us to the following problem:
\begin{problem}
Can one characterize when Weyl asymptotics will hold based on the behavior of the system $\varphi,\theta$ for problems with infinite index?
\end{problem}
We point out that Sections \ref{MiePotential} and \ref{subPoly} show that for an infinite regularization index, Weyl-asymptotics may or may not hold. In such cases we cannot rely on Darboux transforms, so we instead use the following lemma to prove Weyl asymptotics for the example in Section \ref{MiePotential} which has an infinite regularization index.

\begin{lemma}\label{LemmaWeyl}
    Let $a,b$ be finite and assume that $\tau$ satisfies Weyl asymptotics. If the potential $q_1$ is bounded from below, then $\tau_{q_1}:=\tau+\frac{q_1(x)}{r(x)}$ satisfies Weyl asymptotics.
\end{lemma}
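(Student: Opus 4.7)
The plan is to invoke the min--max variational principle, exploiting that the Weyl constant $c_0 := \pi^2\big(\int_a^b\sqrt{r/p}\,dt\big)^{-2}$ appearing in \eqref{eq:Weyl} depends only on the coefficients $p$ and $r$, not on $q$. Thus a bounded-below perturbation $q_1/r$ of the potential should leave the leading-order eigenvalue asymptotic $\lambda_n \sim c_0 n^2$ intact. Concretely, letting $T$ and $T_{q_1}$ denote self-adjoint realizations of $\tau$ and $\tau_{q_1}$ with compatible boundary conditions (e.g., Friedrichs), the goal is to prove matching asymptotic bounds for $\lambda_n(T_{q_1})$.

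For the lower bound, write $q_1 \geq m := \inf q_1 > -\infty$. The quadratic forms satisfy
\[
\mathfrak{t}_{q_1}[f] = \mathfrak{t}[f] + \int_a^b q_1|f|^2\,dx \geq \mathfrak{t}[f] + m\int_a^b|f|^2\,dx,
\]
and since $(a,b)$ is finite, the extra term amounts to a bounded-below perturbation of $T$ that is relatively form-bounded with relative bound zero (by Sobolev-type embedding arguments in the form domain). The min--max principle, combined with Weyl asymptotics of $T$, then yields $\lambda_n(T_{q_1}) \geq \lambda_n(T) + O(1) = c_0 n^2(1+o(1))$.

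For the matching upper bound, I would use Dirichlet bracketing together with a cutoff of $q_1$. For each $\varepsilon > 0$, choose a compact subset $K_\varepsilon \subset (a,b)$ on which $q_1 \leq N_\varepsilon$ with $\int_{(a,b)\setminus K_\varepsilon}\sqrt{r/p}\,dt < \varepsilon$; this is possible since $q_1$ is a.e.\ finite and $\sqrt{r/p}\in L^1((a,b);dx)$ under Weyl asymptotics. Imposing Dirichlet conditions at the endpoints of $K_\varepsilon$ and applying min--max with the bounded perturbation $q_1|_{K_\varepsilon}$ yields $\lambda_n(T_{q_1}) \leq \lambda_n(T^{K_\varepsilon,D}_{q_1}) \leq \lambda_n(T^{K_\varepsilon,D}) + C N_\varepsilon$, where $T^{K_\varepsilon,D}$ is the Dirichlet realization of $\tau$ on $K_\varepsilon$. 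Weyl asymptotics applied to $\tau$ restricted to $K_\varepsilon$ give $\lambda_n(T^{K_\varepsilon,D}) \sim c_0^\varepsilon n^2$ with $c_0^\varepsilon \to c_0$ as $\varepsilon \to 0$, so a diagonal selection $\varepsilon = \varepsilon_n \to 0$ chosen with $N_{\varepsilon_n}/n^2 \to 0$ gives $\limsup_n \lambda_n(T_{q_1})/n^2 \leq c_0$.

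The main obstacle is calibrating the cutoff rate: one needs $\varepsilon_n \to 0$ fast enough to force $c_0^{\varepsilon_n} \to c_0$ but slow enough that $N_{\varepsilon_n} = o(n^2)$. This balance is achievable since $q_1 \in L^1_{loc}((a,b))$ controls the distribution function $|\{x \in (a,b): q_1(x) > t\}|$ as $t \to \infty$, making a standard diagonal extraction possible. Combining the two matched bounds then gives the Weyl asymptotics \eqref{eq:Weyl} for $T_{q_1}$.
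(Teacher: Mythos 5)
Your overall architecture is the right one and matches the paper's: compare with the unperturbed operator for the lower bound, and for the upper bound restrict to a compact subinterval where the problem is regular, use Weyl asymptotics there with constant $c_0^\varepsilon\to c_0$, and let $\varepsilon\to 0$. (The paper implements the lower bound via Sturm--Picone on the truncated regular problems and passes to the limit using Zettl's interval-monotonicity and convergence theorems, which is min--max in disguise; it also simply assumes w.l.o.g.\ $q_1\ge 0$, so your ``relative form bound zero'' digression for the $m\int|f|^2dx$ term is not needed and, as stated, is not substantiated in the weighted space $L^2((a,b);r\,dx)$ when $r$ degenerates.)

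The genuine gap is in your upper bound. You try to reduce $q_1$ to a \emph{bounded} perturbation by passing to a compact set $K_\varepsilon$ on which $q_1\le N_\varepsilon$. First, under the lemma's hypotheses $q_1$ is only bounded below and locally integrable, so it may be unbounded above on \emph{every} subinterval (e.g.\ $q_1(x)=\sum_n 2^{-n}|x-q_n|^{-1/2}$ with $\{q_n\}$ dense); the sublevel set $\{q_1\le N_\varepsilon\}$ of nearly full measure is then not a finite union of intervals, and for a general compact set there is no Dirichlet realization of $\tau$ to which the one-dimensional Weyl asymptotics $\lambda_n\sim c_0^\varepsilon n^2$ apply. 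Second, even where $q_1\le N_\varepsilon$, the form perturbation is $\int q_1|f|^2dx$ while the norm is $\int|f|^2r\,dx$, so ``bounded perturbation'' requires $q_1/r\in L^\infty$, which fails if $r$ has essential infimum zero on the relevant set. The missing idea --- and the point of the paper's proof --- is that no upper control on $q_1$ is needed at all: the restriction $\tau_{q_1}|_{(a+\varepsilon,b-\varepsilon)}$ is already a \emph{regular} Sturm--Liouville problem (since $q+q_1\in L^1((a+\varepsilon,b-\varepsilon);dx)$), and regular problems satisfy Weyl asymptotics with the constant $\pi^2\big(\int_{a+\varepsilon}^{b-\varepsilon}\sqrt{r/p}\,dt\big)^{-2}$ determined by $p$ and $r$ alone; combined with your bracketing inequality $\lambda_n(T_{q_1})\le\lambda_n^{q_1,\varepsilon}$ this gives $\limsup_n\lambda_n(T_{q_1})/n^2\le c_0^\varepsilon$ for each fixed $\varepsilon$, and then $\varepsilon\to0$. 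This also disposes of your ``calibration'' worry: since $\varepsilon$ is held fixed while $n\to\infty$, any additive constant is automatically $o(n^2)$ and no diagonal extraction is required.
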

\begin{proof}
    Fix $p,$ $r,$ and $q$, and assume w.l.o.g that $q_1 \geq 0$.
    Let $\lambda_n^{q_1,\varepsilon}$ be the Dirichlet eigenvalues of $\tau_{q_1}|_{(a+\varepsilon, b-\varepsilon)}$ for $\varepsilon > 0$ small enough. Then these eigenvalues will satisfy Weyl asymptotics, $\lambda_n^{q_1, \varepsilon} = n^2 \pi^2\left(\int_{a+\varepsilon}^{b-\varepsilon}\sqrt{\frac{r(t)}{p(t)}}dt\right)^{-2}(1+o(1))$ as $n \to \infty$, as the problem is regular at both endpoints. Similarly, the Dirichlet eigenvalues $\lambda_n^{\varepsilon}$ of $\tau|_{(a+\varepsilon, b - \varepsilon)}$ also satisfy the same asymptotics.

    By the Sturm--Picone comparison theorem we must have that $\lambda_n^{ \varepsilon} \leq \lambda_n^{q_1,\varepsilon}$ for all $n \geq 0$. Moreover \cite[Thm.~4.4.4]{Ze05} (and the remark after it) shows that the Dirichlet eigenvalues $\lambda_n^{\varepsilon}$ and $\lambda_n^{q_1,\varepsilon}$ increase as a function of $\varepsilon$, while \cite[Thm.~10.8.2]{Ze05} shows that $\lim_{\varepsilon \to 0} \lambda_n^{q_1,\varepsilon} = \lambda_n^{q_1}$ resp.~$\lim_{\varepsilon \to 0} \lambda_n^{\varepsilon} = \lambda_n$, where $\lambda_n^{q_1}$ and $\lambda_n$ are the eigenvalues of the Friedrichs realization of $\tau_{q_1}$ and $\tau$, respectively. Note that as $\tau$ has Weyl asymptotics by assumption, we have $\lambda_n = n^2 \pi^2\left(\int_{a}^{b}\sqrt{\frac{r(t)}{p(t)}}dt\right)^{-2}(1+o(1))$ as $n \to \infty$. Thus it follows that $\lambda_n =  \lim_{\varepsilon' \to 0}\lambda^{\varepsilon'}_n \leq \lambda_n^{q_1} \leq \lambda_n^{q_1,\varepsilon}$ for all $\varepsilon > 0$, proving Weyl asymtptotics for $\tau_{q_1}$.
\end{proof}

\section{Weyl \texorpdfstring{$m$}{m}-function in case of finite regularization index}\label{SectWeylm}
In this section we compute Weyl $m$-functions in the case of a finite regularization index. As it turns out, these $m$-functions will be in the generalized Nevanlinna--Herglotz class $N_\kappa^\infty$ with $\kappa = \floor{(\ell+1)/2}$. More explicitly, the underlying $m$-functions can be written in terms of $m$-functions of limit circle problems having the familiar Nevanlinna--Herglotz property (see Propositions \ref{Prop:Nevanlinna1} and \ref{Prop:Nevanlinna2}).

Again to simplify our analysis, we assume that $\tau$ is in the Schr\"odinger form \eqref{SchroedingerForm} with $\ell_a, \ell_b < \infty$. As we are interested in at least one endpoint being in the limit point case, we will exclude the case of both endpoints being limit circle and without loss of generality assume $0\leq \ell_a\leq \ell_b < \infty$ with $\ell_b\geq1$.

Assuming momentarily that $\ell_a\geq 1$, we denote by $T$ the unique self-adjoint realization of $\tau$ (as both endpoints are in the limit point case). Consider now an arbitrary naturally normalized system $\theta_a$, $\varphi_a$ at the endpoint $x = a$ (and analogously $\theta_b$, $\varphi_b$ at the endpoint $x = b$). Then we can define the singular Weyl $m$-function (see \cite{GZ06}, \cite{KST_IMRN}) satisfying the equation
\begin{align}\label{mFunction}
    \theta_a(z,x) + m_T(z)\varphi_a(z,x) = D(z)\varphi_b(z,x), \qquad x \in (a,b), \quad z \in \C \setminus \R,
\end{align}
where $D(\,\cdot\,)$ is some holomorphic function on $\C \setminus \R$.
It is known that the spectrum of $T$ can be recovered from the limits $\lim_{\delta \to 0\pm} m_T(x + i\delta)$, $x \in \R$. In particular, in the presence of a purely discrete spectrum, $m_T$ can be extended to a meromorphic function with poles at the eigenvalues of $T$.

As described in the previous section, we will `regularize' the expression $\tau$ by applying a sequence of Darboux transforms which lower the regularization index $\ell_a$ to zero so that $x=a$ is a limit circle endpoint while simultaneously lowering $\ell_b$. While this process will not be unique, we will choose the optimal method in the sense of the least number of transforms while also adding the fewest eigenvalues possible. To this end we are looking for a seed function $\psi$ solving $\tau\psi = \lambda \psi$ satisfying the following properties:
\begin{enumerate}
    \item $\psi(x) \not =  0$ for all $x \in (a,b);$
    \item $\psi$ is nonprincipal at both endpoints.
\end{enumerate}
We choose an appropriate seed function $\psi_1$ satisfying conditions $(i)$, $(ii)$ above as follows. Consider any $\lambda_1 < \inf \sigma(T)$ (note that $T$ is bounded from below as $\tau$ is nonoscillatory at both endpoints). Then $\varphi_a(\lambda_1, x)$ is nonprincipal at $x = b$, as otherwise $\lambda$ would be an eigenvalue of $T$ (recall that Hypothesis \ref{Hypothesis} implies that principal solutions are $L^2$-integrable near the endpoint in question). Moreover, it follows from \cite[Cor.~2.4]{GST} that $\varphi_a(\lambda_1, x)$ has a fixed sign on $(a,b)$. We now define the seed function as
\begin{align}\label{seedFunction}
\psi_1(x) =\varphi_a(\lambda_1,x)\Bigg[\int_x^{b} \frac{dt}{p(t)\varphi^2_a(\lambda_1,t)} + 1 \Bigg].
\end{align}
It is easy to see that conditions $(i)$, $(ii)$ are satisfied. Moreover, we add a constant multiple $C_1=C_1(\lambda_1)\in\R$ of $\varphi_a$ to $\theta_a$ in order to also have
\begin{equation*}
\psi_1(x) =\theta_{a,1}(\lambda_1,x)=\theta_a(\lambda_1, x)+C_1\varphi_a(\lambda_1, x).
\end{equation*}
Introducing the notation $\varphi_{c,1}=\varphi_c,\ c\in\{a,b\}$, we arrive at the equation
\begin{align}\label{mFunction2}
    \theta_{a,1}(z,x) + (m_T(z)-C_1)\varphi_{a,1}(z,x) = D(z)\varphi_{b,1}(z,x), \qquad x \in (a,b), \quad z \in \C \setminus \R.
\end{align}
This corresponds to changing $m_T$ in \eqref{mFunction} by an additive constant. 

Applying $\frac{1}{z-\lambda_1}A_{\psi_1}$ to both sides of \eqref{mFunction2} yields
\begin{align*}
    \widehat \theta_{a,1}(z,x) + \underbrace{\frac{m_T(z)-C_1}{z-\lambda_1}}_{m_{\widehat T_1}(z)} \widehat \varphi_{a,1}(z,x) = \frac{1}{z-\lambda_1} D(z) \widehat \varphi_{b,1}(z,x), \quad x \in (a,b), \ z \in \C \setminus \R.
\end{align*}
Note that $m_{\widehat T_1}$ is the $m$-function of the self-adjoint realization $\widehat T_1$ of the Darboux transformed $\widehat \tau_1$, with Dirichlet boundary conditions (if any) since $\widehat \varphi_{a,1}$ and $\widehat \varphi_{b,1}$ are principal at $a$ and $b$, respectively. Moreover, as $\theta_{a,1}(\lambda_1,x) = \psi_1(x)$ is nonprincipal at $x = b$, it follows that $\lim_{\delta \to 0\pm} m_T(\lambda_1+i \delta) - C_1 \not = 0$, implying that $\sigma(\widehat T_1) = \sigma(T) \cup \lbrace \lambda_1 \rbrace$, that is, one eigenvalue was added to the spectrum. Also, by Theorem \ref{Thm:Darboux}, we have that both indices have been lowered by 1.

Now, since we assumed $\ell_a\leq\ell_b$, we can repeat this procedure $\ell_a$ times as follows: we introduce the notation $\varphi_{c,j+1}=\widehat\varphi_{c,j},\ c\in\{a,b\},\ j\in\N,$ and shift by a constant $C_{j+1}=C_{j+1}(\lambda_{j+1})\in\R$ after choosing $\lambda_{j+1}<\lambda_{j}$ at each step through
\begin{align*}
\theta_{a,j+1}(z,x)=\widehat \theta_{a,j}(z,x)+C_{j+1}\widehat\varphi_{a,j}(z,x),\qquad x\in(a,b),\quad z\in\C\setminus\R,\\
j\in\{1,2,\dots,\ell_{a}-1\}.
\end{align*}
Hence, after $\ell_a$ steps one arrives at 
\begin{align}\label{hatmfunctioneq}
    \widehat \theta_{a,\ell_a}(z,x) + m_{\widehat T_{\ell_a}}(z) \widehat \varphi_{a,\ell_a}(z,x) = \prod_{j=1}^{\ell_a}(z-\lambda_j)^{-1} D(z) \widehat \varphi_{b,\ell_a}(z,x), \notag \\
     x \in (a,b), \quad z \in \C \setminus \R,
\end{align}
with
\begin{equation}\label{hatmFunction1}
m_{\widehat T_{\ell_a}}(z)=\Bigg[\prod_{j=1}^{\ell_a}(z-\lambda_j)^{-1}\Bigg] \left[m_{T}(z)-\sum_{k=1}^{\ell_a}C_k\prod_{n=1}^{k-1}(z-\lambda_n)\right],\qquad z\in\C\setminus\R,
\end{equation}
where $\lambda_j$ denotes the eigenvalue added (below the previous step's spectrum) at the $j$th step with $j\in\{1,2,\dots,\ell_a\}$ and the product is empty and equal to $1$ for $k=1$. If $\ell_a=0$, then the products in \eqref{hatmFunction1} are considered empty and equal to 1 (i.e., no Darboux transformation was needed to make the endpoint $a$ limit circle so \eqref{hatmfunctioneq} and \eqref{mFunction} agree). As $\widehat \varphi_{a,\ell_a}$ and $ \widehat \varphi_{b,\ell_a}$ remain principal at $a$ and $b$, respectively (see Proposition \ref{PropPhiTheta}), $m_{\widehat T_{\ell_a}}$ is the $m$-function of the self-adjoint realization $\widehat T_{\ell_a}$ of $\widehat\tau_{\ell_a}$ with necessarily Dirichlet boundary conditions at $x=a$ (as $\widehat\ell_{a_{\ell_a}}=0$) and either Dirichlet at $x=b$ if $\ell_a=\ell_b$ or no boundary conditions at $x=b$ if $\ell_b>\ell_a$ (as $\widehat\ell_{b_{\ell_a}}=\ell_b-\ell_a$). That is, $\widehat T_{\ell_a}$ is the Friedrichs realization of $\widehat \tau_{\ell_a}$ (see \cite[Thm. 4.7]{GLN20}).

As it is known that when $a$ is a limit circle endpoint, the Weyl $m$-function defined via \eqref{hatmFunction1} with a naturally normalized system of solutions is in fact a Nevanlinna--Herglotz function (also called a Pick function; see \cite[Eq. (5.12)]{GLN20}), we arrive at the following result (see Remark \ref{remarkregularization} for weaker conditions for the theorem to hold):

\begin{proposition}\label{Prop:Nevanlinna1}
Assume Hypotheses \ref{HypoInt} and \ref{HypoDarb}. Furthermore, assume Hypothesis \ref{Hypothesis} is satisfied at $x=a, b$ with $\ell_a,\ell_b<\infty$, let $\ell=\min\{\ell_a,\ell_b\}$, and denote by $T$ either the unique self-adjoint realization of $\tau$ if $\ell_a,\ell_b\neq0$ or the Friedrichs realization otherwise. Then for each choice of $\ell$ real numbers satisfying $\lambda_\ell<\dots<\lambda_{1}<\inf\sigma(T)$ there is a Nevanlinna--Herglotz function $m_{\widehat T_\ell}$ and $\ell$ constants $C_j=C_j(\lambda_j)\in\R$, $j = 1, \dots, \ell$ such that the Weyl $m$-function $m_T$ in \eqref{mFunction} can be written as 
\begin{equation*}
m_{T}(z)=\Bigg[\prod_{j=1}^{\ell}(z-\lambda_j)\Bigg]m_{\widehat T_{\ell}}(z)+\sum_{k=1}^{\ell}C_k\prod_{n=1}^{k-1}(z-\lambda_n),\qquad z\in\C\setminus\R.
\end{equation*}
If $\ell=0$, the sum and products are understood as empty and equal to 0 and 1, respectively.

Moreover, $m_{\widehat T_{\ell}}$ can be understood as the $m$-function for the Friedrichs realization, $\widehat T_{\ell}$, of an $\ell$-times Darboux transformed $\widehat \tau_{\ell}$ so that $\sigma(\widehat T_{\ell})=\sigma(T)\cup\{\lambda_j\}_{j=1}^\ell$.
\end{proposition}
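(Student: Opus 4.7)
The strategy is essentially to combine the iterative Darboux construction already set up in the paragraphs preceding the proposition with the known Nevanlinna--Herglotz property of Weyl $m$-functions in the limit circle case. Starting from \eqref{mFunction} and replacing $\theta_a$ by $\theta_{a,1}=\theta_a+C_1\varphi_a$ introduces the free constant $C_1$ into the equation. Applying $\frac{1}{z-\lambda_1}A_{\psi_1}$ to both sides and repeating the procedure $\ell$ times (using the seed functions $\psi_1,\ldots,\psi_\ell$ constructed as in \eqref{seedFunction} at parameters $\lambda_1>\cdots>\lambda_\ell$) yields after a telescoping computation exactly \eqref{hatmFunction1}, which, when solved algebraically for $m_T(z)$, gives the displayed identity of the proposition with $m_{\widehat T_\ell}(z):=m_{\widehat T_{\ell_a}}(z)$. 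So modulo showing that this $m_{\widehat T_\ell}$ is truly Nevanlinna--Herglotz, the proposition reduces to rearranging \eqref{hatmFunction1}.

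The plan for the iteration is as follows. At each step $j$, one first checks that the seed function $\psi_j=\theta_{a,j}(\lambda_j,\,\cdot\,)=\varphi_{a,j}(\lambda_j,\,\cdot\,)[\int_x^b dt/(p\varphi_{a,j}^2(\lambda_j,t))+1]$ (after an appropriate shift absorbed into $C_j$) is positive on $(a,b)$ and nonprincipal at both endpoints; this uses that $\lambda_j<\inf\sigma(\widehat T_{j-1})$ together with \cite[Cor.~2.4]{GST}. Applying the Darboux transform with this seed lowers both regularization indices by one (Theorem \ref{Thm:Darboux}, case $(ii)$). Crucially, Corollary \ref{cor:cannorm} guarantees that $\widehat\varphi_{a,j},\widehat\theta_{a,j}$ remain a naturally normalized system, so the next $m$-function is again of the type appearing in \eqref{mFunction2}. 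Tracking the coefficient of $\widehat\varphi_{a,\ell}$ through these $\ell$ successive applications produces precisely the product/sum structure in \eqref{hatmFunction1}, and rearranging gives the target formula.

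After $\ell=\ell_a$ transforms, Theorem \ref{Thm:Darboux} and Proposition \ref{PropPhiTheta} give $\widehat\ell_a=0$ with $\widehat\varphi_{a,\ell}$ principal at $a$, so $\widehat\tau_\ell$ is in the limit circle case at $a$ with the Dirichlet (equivalently, principal) boundary condition corresponding to the Friedrichs extension. At $b$ the same statement holds if $\ell_a=\ell_b$, whereas if $\ell_a<\ell_b$ the endpoint $b$ stays limit point and no boundary condition is needed. In either case $\widehat T_\ell$ is the Friedrichs realization of $\widehat\tau_\ell$ as asserted in \cite[Thm.~4.7]{GLN20}, and since $\widehat\varphi_{a,\ell},\widehat\theta_{a,\ell}$ form a naturally normalized system at a limit circle endpoint, the classical result (e.g.\ \cite[Eq.~(5.12)]{GLN20}) forces $m_{\widehat T_\ell}$ to be a Nevanlinna--Herglotz function. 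Finally, the spectral identity $\sigma(\widehat T_\ell)=\sigma(T)\cup\{\lambda_j\}_{j=1}^\ell$ follows from the fact (\cite{D78}) that a single Darboux transform with nonprincipal seed at $\lambda_j$ adds exactly $\lambda_j$ to the spectrum.

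The main technical hurdle is the bookkeeping: one must verify at every step $j$ simultaneously that the naturally normalized property of the intermediate system is preserved, that $\lambda_j$ lies strictly below $\inf\sigma(\widehat T_{j-1})$ (so that the seed $\psi_j$ is well-defined and positive), and that the correct self-adjoint realization is being tracked after each transform. The first of these is Corollary \ref{cor:cannorm}; the second is automatic because each $\widehat T_j$ gains only the eigenvalues $\lambda_1,\ldots,\lambda_j$ below $\inf\sigma(T)$, so any $\lambda_{j+1}<\lambda_j$ remains strictly below $\inf\sigma(\widehat T_j)$; the third is controlled by Proposition \ref{PropPhiTheta}, which tells us that the principal/nonprincipal character of $\widehat\varphi,\widehat\theta$ at both endpoints evolves exactly as needed to keep us in the Friedrichs sector throughout.
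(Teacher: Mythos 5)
Your proposal is correct and follows essentially the same route as the paper: the paper's proof is precisely the iterative Darboux construction of the preceding discussion (equations \eqref{mFunction2}--\eqref{hatmFunction1}), with Corollary \ref{cor:cannorm} preserving the natural normalization at each step, Theorem \ref{Thm:Darboux} and Proposition \ref{PropPhiTheta} tracking the indices and the Friedrichs boundary conditions, and the classical limit circle result \cite[Eq.~(5.12)]{GLN20} supplying the Nevanlinna--Herglotz property of $m_{\widehat T_\ell}$. The only cosmetic difference is that the paper deduces $\sigma(\widehat T_1)=\sigma(T)\cup\{\lambda_1\}$ from the nonvanishing of $m_T(\lambda_1)-C_1$ rather than citing \cite{D78} directly, which is immaterial.
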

\begin{proof}
Follows from the previous discussion (after a possible Liouville transform) by using the naturally normalized system of solutions at the endpoint corresponding to $\ell$ on the left-hand side of \eqref{mFunction}. Note that the solutions will still be naturally normalized after the application of Darboux transforms by Corollary \ref{cor:cannorm}.
\end{proof}

\begin{remark}\label{Remarkm}
    Due to Remark \ref{RemThetaUnique}, a naturally normalized system $($at $x = a$$)$ is essentially unique up to the addition of $f(z) \varphi(z,x)$ to $\theta(z,x)$, where $f$ is an arbitrary real polynomial of degree $\ell_a$. Under such a transformation the $m$-function in \eqref{mFunction} would become $\overset{\circ}{m}_T = m_T - f$. In particular, we can always normalize $\theta$ such that the new $m$-function has the simpler form
    \begin{align} \label{mTilde}
        \overset{\circ}{m}_T(z) = \Bigg[\prod_{j=1}^{\ell_a}(z-\lambda_j)\Bigg]m_{\widehat T_{\ell}}(z).
    \end{align}
\end{remark}

Notice that one implication of Proposition \ref{Prop:Nevanlinna1} is that the $m$-function satisfying \eqref{mFunction} for any naturally normalized system is not a Nevanlinna--Herglotz function unless $\ell_a=0$. In fact, as a corollary we see that under the assumptions of the previous theorem, the $m$-function of the original problem is a type of generalized Nevanlinna--Herglotz function.

\begin{corollary}\label{CorKappa}
In addition to the assumptions and notation of Proposition \ref{Prop:Nevanlinna1}, let $\kappa=\lfloor(\ell+1)/2\rfloor.$
Then any singular $m$-function $m_T$ coming from \eqref{mFunction} with $\varphi$, $\theta$ are naturally normalized is in the subclass $N_\kappa^\infty$ consisting of generalized Nevanlinna--Herglotz functions with $\kappa$ negative squares, no nonreal poles, and the only generalized pole of nonpositive type at infinity.
\end{corollary}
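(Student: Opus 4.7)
The plan is to deduce the corollary directly from Proposition \ref{Prop:Nevanlinna1} together with the standard Krein--Langer factorization theory of generalized Nevanlinna functions. By Remark \ref{RemThetaUnique}, any two naturally normalized systems at the endpoint corresponding to $\ell$ differ by the transformation $\theta \mapsto \theta + f\varphi$ for some real polynomial $f$ of degree at most $\ell$; this merely shifts $m_T$ by the real polynomial $-f$. Thus it suffices to prove the claim for the particular normalization of Remark \ref{Remarkm}, which yields
\begin{equation*}
\overset{\circ}{m}_T(z) = P(z)\, m_{\widehat T_\ell}(z), \qquad P(z) = \prod_{j=1}^\ell (z - \lambda_j),
\end{equation*}
where $P$ is a monic real polynomial of degree $\ell$ with all $\ell$ zeros real (and in fact below $\inf\sigma(T)$), and $m_{\widehat T_\ell}$ is a genuine Nevanlinna--Herglotz function.

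The main step is to verify that $P \cdot m_{\widehat T_\ell}$ lies in the subclass $N_\kappa^\infty$ with $\kappa = \lfloor(\ell+1)/2\rfloor$. I would approach this by invoking the standard factorization statement: a function $f$ belongs to $N_\kappa^\infty$ precisely when $f = pN + r$ with $N$ Nevanlinna--Herglotz, $p$ a real polynomial of degree $2\kappa$ or $2\kappa-1$ with only real zeros, and $r$ a real polynomial of degree at most $\deg p$. Applied here with $\deg p = \ell$, this gives $\kappa = \lceil \ell/2 \rceil = \lfloor(\ell+1)/2\rfloor$. Alternatively, one proves this inductively by verifying directly that multiplying any function in $N_j^\infty$ by a linear factor $(z-\lambda_j)$ with $\lambda_j \in \R$ increments the negative-square count of the difference-quotient kernel by zero or one, with pairs of consecutive multiplications always adding exactly one, leading to the stated count. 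That the resulting function belongs to the subclass $N_\kappa^\infty$ (rather than merely $N_\kappa$) follows from two observations: $P \cdot m_{\widehat T_\ell}$ has no nonreal poles (since $m_{\widehat T_\ell}$ is holomorphic on $\C \setminus \R$ and $P$ is entire), while the Herglotz growth bound $|m_{\widehat T_\ell}(z)| = o(|z|)$ nontangentially as $|z| \to \infty$ confines $P \cdot m_{\widehat T_\ell}$ to polynomial growth of order at most $\ell$, placing the unique generalized pole of nonpositive type at infinity.

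Finally, restoring generality, the real polynomial of degree at most $\ell$ produced by switching between naturally normalized systems itself belongs to a subclass $N_{\kappa'}^\infty$ with $\kappa' \leq \kappa$ and contributes neither nonreal poles nor excess growth at infinity beyond order $\ell$, so the sum remains in $N_\kappa^\infty$ with the same $\kappa$. The principal technical obstacle in this argument is the precise negative-square count of Step 2, for which one must appeal either to the Krein--Langer factorization theorem or to an explicit computation of the kernel signature; once this classical input is in hand, the absence of nonreal poles and the location of the generalized pole at infinity are straightforward structural verifications.
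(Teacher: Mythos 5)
Your overall skeleton matches the paper's: reduce to the normalization of Remark \ref{Remarkm} so that $m_T=\overset{\circ}{m}_T+f$ with $\overset{\circ}{m}_T(z)=\prod_{j=1}^{\ell}(z-\lambda_j)\,m_{\widehat T_\ell}(z)$, show the product lies in $N_\kappa^\infty$, and absorb the polynomial $f$ at the end. However, the central counting step rests on a statement that is false as you have phrased it. It is not true that $pN+r$ with $N$ Nevanlinna--Herglotz and $p$ a real polynomial of degree $2\kappa$ or $2\kappa-1$ with only real zeros always lies in $N_\kappa^\infty$: take $N(z)=-1/z$ and $p(z)=z$, so that $pN\equiv -1\in N_0$, not $N_1^\infty$. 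The number of negative squares of $P\cdot m_{\widehat T_\ell}$ is \emph{not} determined by $\deg P$ alone; it depends on the location of the zeros $\lambda_j$ relative to the support of the measure of $m_{\widehat T_\ell}$ and on the behavior of $m_{\widehat T_\ell}$ at infinity. Your "alternative" inductive route asserts that "pairs of consecutive multiplications always add exactly one" negative square, but this is precisely the nontrivial claim that needs proof, and it fails for decaying Herglotz functions as the example shows.

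The paper closes exactly this gap with two ingredients you do not supply. First, it proves that $m_{\widehat T_\ell}$ not only satisfies $m_{\widehat T_\ell}(iy)=o(y)$ but actually \emph{grows unboundedly} at infinity: if it decayed, one more Darboux step (which multiplies by another simple pole) would destroy the Herglotz property, and if it had a finite limit one could shift $\widehat\theta_{a,\ell_a}$ by a multiple of $\widehat\varphi_{a,\ell_a}$ to produce a decaying Friedrichs $m$-function, a contradiction. Second, it uses the fact that each $\lambda_j$ is chosen below the spectrum of the current realization, so that \cite[Cor.~3.6]{KST_MN} applies at every step and shows that the first and then every other multiplication by $(z-\lambda_j)$ raises the index by one, yielding exactly $\kappa=\lfloor(\ell+1)/2\rfloor$. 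Your final step is also loose: the sum of a function in $N_\kappa^\infty$ and one in $N_{\kappa'}^\infty$ with $\kappa'\le\kappa$ is in general only in $N_{\kappa+\kappa'}$; what saves the argument is the specific invariance of $N_\kappa^\infty$ under addition of real polynomials of degree $\le 2\kappa$ (\cite[p.~190]{FL10}), which applies here since $\ell\le 2\kappa$. Without the growth statement and the spectral placement of the $\lambda_j$, the proposal does not establish the exact value of $\kappa$.
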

\begin{proof}
We first point out that $N_0=N_0^\infty$ is the class of Nevanlinna--Herglotz functions. Also, by Remark \ref{Remarkm} the $m$-function $m_T$ can be written as $\overset{\circ}{m}_T + f$, with $\overset{\circ}{m}_T$ of the form \eqref{mTilde}, and $f$ a real polynomial of degree $\ell$. Furthermore, the $m$-function $m_{\widehat T_{\ell_a}}$ in \eqref{hatmFunction1} is a Nevanlinna--Herglotz function satisfying
\begin{equation}\label{mfunctionprop}
\lim_{y\to\infty}\frac{m_{\widehat T_{\ell}}(iy)}{iy}=0,
\end{equation}
as this holds for any Weyl $m$-function for Sturm--Liouville operators with one limit circle endpoint.  In particular, it has a Nevanlinna--Herglotz representation (see \cite[Eq. (6.2.39)]{GNZ23}).

Moreover, by performing one more Darboux transformation with nonprincipal seed function, one would multiply $m_{\widehat T_{\ell}}$ by another simple pole and still have a Nevanlinna--Herglotz function (see \ref{hatmFunction2}). Thus, $m_{\widehat T_{\ell}}$ cannot decay and must grow sublinearly by \eqref{mfunctionprop}. In fact, it must grow since if it had a finite limit at infinity, one could redefine $\widehat \theta_{a, \ell_a}$ in \eqref{hatmfunctioneq} by adding this limit times $\widehat \varphi_{a,\ell_a}$ to reach a contradiction since the new $m$-function would decay.
We suspect this is well-known behavior for the Friedrichs $m$-function of this form, but could not find an explicit statement in the literature.

The generalized Nevanlinna--Herglotz property for $\overset{\circ}{m}_T$ of the form \eqref{mTilde} now follows from repeated applications of \cite[Cor.~3.6]{KST_MN} taking $\lambda=\lambda_j=\inf \sigma(H)$ in \cite[Eq.~(3.14)]{KST_MN} at each step and understanding the infinite limit in \cite[Cor. 3.6]{KST_MN} as unbounded growth of the ratio. In particular, the first and then every other multiplication by $(z- \lambda_j)$ raises the index $\kappa$ in $N_\kappa^\infty$ by one. As the class $N_\kappa^\infty$ is invariant under addition of a real polynomial $f$ of degree $\ell \leq 2\kappa$ (see \cite[p.~190]{FL10}), the claim follows for the general $m_T$. 
\end{proof}

A few remarks are now in order.
\begin{remark}\label{remarkregularization}
$(i)$ Proposition \ref{Prop:Nevanlinna1} and Corollary \ref{CorKappa} remain true by assuming Hypothesis \ref{Hypothesis} holds with a finite regularization index at only one endpoint, and the other endpoint is nonoscillatory, with no further restrictions.\\[1mm]
$(ii)$ We remark that since the seed function was chosen to be nonprincipal at both endpoints at each step, this is the optimal method to regularize the endpoint with smallest regularization index in the sense of the least number of transforms and adding the fewest eigenvalues by Theorem \ref{Thm:Transformation}.\\[1mm]
$(iii)$ The definition of the regularization index $\ell_a$ at $x = a$ is closely related to the angular momentum $l \geq -\frac{1}{2}$ of a perturbed Bessel operator studied in the series of papers \cite{KST_Inv}--\cite{KT13}. More precisely,  $\ell_0 = \floor{l + \frac{1}{2}}$, that is,~in our language the perturbed Bessel operator studied in \cite{KST_Inv}--\cite{KT13} has a regularization index $\floor{l + \frac{1}{2}}$ at $x=0$. 
\end{remark}

So far we applied Darboux transforms until one of the endpoints (in our case $a$) is in the limit circle case. If the regularization indices are equal, then the Darboux transformed expression $\widehat\tau_{\ell_a}$ is in the limit circle nonoscillatory case at both endpoints. Otherwise, assuming $\ell_a<\ell_b$, we now want to continue the procedure $\ell_b-\ell_a$-times choosing seed functions with nonprincipal behavior at both endpoints as before. Care will now be needed as the principal/nonprincipal behavior of the naturally normalized system at the endpoint $x=a$ will swap with every further transform by Proposition \ref{PropPhiTheta}.

Assume now that $0\leq\ell_a<\ell_b<\infty$, and that we have completed $\ell_a$ Darboux transforms to arrive at \eqref{hatmFunction1}. As $\widehat\varphi_{a,\ell_a}$ and $\widehat\varphi_{b,\ell_a}$ are still principal at $a$ and $b$, respectively, the next Darboux transformation is exactly the same as previously. Hence, preceding as before, choosing $\lambda_{\ell_a+1}<\lambda_{\ell_a}$, yields
\begin{align}\label{hatmFunction2}
    \widehat \theta_{a,\ell_a+1}(z,x) +m_{\widehat T_{\ell_a+1}}(z) \widehat \varphi_{a,\ell_a+1}(z,x) = \prod_{j=1}^{\ell_a+1}(z-\lambda_j)^{-1} D(z) \widehat \varphi_{b,\ell_a+1}(z,x), \notag \\
     x \in (a,b), \quad z \in \C \setminus \R,
\end{align}
where
\begin{equation*}
m_{\widehat T_{\ell_a+1}}(z)=\Bigg[\prod_{j=1}^{\ell_a+1}(z-\lambda_j)^{-1}\Bigg]\left[m_{T}(z)-\sum_{k=1}^{\ell_a+1}C_k\prod_{n=1}^{k-1}(z-\lambda_n)\right],\qquad z\in\C\setminus\R.
\end{equation*}
However, $\widehat\varphi_{a,\ell_a+1}$ will now be nonprincipal at $a$, while $\widehat\theta_{a,\ell_a+1}$ will be principal at $a$, by Proposition \ref{PropPhiTheta}. Note that $\widehat\varphi_{b,\ell_a+1}$ will remain principal at $b$ (which will continue to be the case). This time $m_{\widehat T_{\ell_a+1}}$ is the $m$-function of the self-adjoint realization $\widehat T_{\ell_a+1}$ of $\widehat \tau_{\ell_a+1}$ with the Neumann-type boundary condition at $x =a$ defined via $\widehat\varphi_{a,\ell_a+1}$ and either Dirichlet at $x=b$ if $\ell_b=\ell_a+1$ or no boundary conditions at $x=b$ if $\ell_b>\ell_a+1$. While this step once again adds an eigenvalue, we will have to modify the next seed function chosen since the principal/nonprincipal behavior near $x=a$ interchanged.

Assuming now that $\ell_b>\ell_a+1$ (so that at least one more step is needed in the regularization process), we choose the seed function with $\lambda_{\ell_a+2}<\lambda_{\ell_a+1}$,
\begin{equation*}
\psi_{\ell_a+2}(x)=\widehat\varphi_{a,\ell_a+1}(\lambda_{\ell_a+2},x),
\end{equation*}
which is nonprincipal at both endpoints (as the nonprincipality at $x=b$ remained unchanged through each transformation). The main difference now is that no constant shift is needed for the choice of seed function above. Therefore we need only apply $A_{\psi_{\ell_a+2}}$ to \eqref{hatmFunction2} on this step to arrive at
\begin{align}\label{hatmfunctioneq2}
    \widehat \theta_{a,\ell_a+2}(z,x) +m_{\widehat T_{\ell_a+2}}(z) \widehat \varphi_{a,\ell_a+2}(z,x) = \prod_{j=1}^{\ell_a+1}\frac{1}{z-\lambda_j} D(z) \widehat \varphi_{b,\ell_a+1}(z,x), \notag \\
     x \in (a,b), \quad z \in \C \setminus \R,
\end{align}
where
\begin{align}\label{hatmFunction4}
m_{\widehat T_{\ell_a+2}}(z)=(z-\lambda_{\ell_a+2})\Bigg[\prod_{j=1}^{\ell_a+1}(z-\lambda_j)^{-1}\Bigg] \left[m_{T}(z)-\sum_{k=1}^{\ell_a+1}C_k\prod_{n=1}^{k-1}(z-\lambda_n)\right],\notag\\
z\in\C\setminus\R.
\end{align}
In particular, \eqref{hatmFunction4} shows that we did not add an eigenvalue in this step and instead added a zero to the $m$-function (since $\lambda_{\ell_a+2}$ was not an eigenvalue of the previous step). Moreover, the principal/nonprincipal behavior at $x=a$ once again swaps, so $m_{\widehat T_{\ell_a+2}}$ is the $m$-function for the corresponding Friedrichs realization once again (with Dirichlet boundary conditions at $x=b$ if and only if $\ell_b=\ell_a+2$).

In case $\ell_b>\ell_a+2$ we have to apply another Darboux transform (from then on the pattern will repeat). Similarly to before, as the endpoint $x = a$ is in the limit circle case, $\widehat \theta_{a, \ell_a+2}$ defines a Neumann-type boundary condition at this endpoint. In fact, $\lambda_{\ell_a+2}$ is the lowest eigenvalue of the self-adjoint realization of $\widehat \tau_{\ell_a+2}$ with the $\widehat \theta_{a, \ell_a+2}$-boundary condition at $x = a$. This mean that we can apply \cite[Cor.~2.4]{GST} to conclude that for any $\lambda_{\ell_a + 3} < \lambda_{\ell_a+2}$ we have that 
\begin{align*}
   \psi_{\ell_a+3}(x)=\widehat\theta_{a,\ell_a+2}(\lambda_{\ell_a+3},x). 
\end{align*}
is nonvanishing and nonprincipal at the endpoint $x = b$ (as otherwise $\lambda_{\ell_a+3}$ would be a smaller eigenvalue). In particular, no constant shift in the $m$-function is required. We can apply $\frac{1}{z-\lambda_{\ell_a+3}}A_{\psi_{\ell_a+3}}$ to both sides of \eqref{hatmfunctioneq2}
which adds an eigenvalue and swaps the principal/nonprincipal behavior at $x=a$.

Finally, this process can be iterated $\ell_b-\ell_a$ times to arrive at the case of both regularization indices becoming zero and the $m$-function
\begin{align}\label{hatmFunction5}
m_{\widehat T_{\ell_b}}(z)&=\Bigg[\prod_{i=\ell_a+1}^{\ell_b}(z-\lambda_i)^{(-1)^{i-\ell_a}}\Bigg]\Bigg[\prod_{j=1}^{\ell_a}(z-\lambda_j)^{-1}\Bigg] \notag\\
&\qquad\times\left[m_{T}(z)-\sum_{k=1}^{\ell_a+1}C_k\prod_{n=1}^{k-1}(z-\lambda_n)\right],\qquad z \in \C \setminus \R.
\end{align}
We remark that the additional poles and zeros added after the $\ell_a$th step in \eqref{hatmFunction5} will necessarily interlace by construction, and a total of $\lfloor (\ell_a+\ell_b+1)/2\rfloor$ eigenvalues were added during the regularization process.

We now summarize this regularization process by extending Proposition \ref{Prop:Nevanlinna1}:
\begin{proposition}\label{Prop:Nevanlinna2}
Assume Hypotheses \ref{HypoInt} and \ref{HypoDarb}. Furthermore, assume Hypothesis \ref{Hypothesis} is satisfied at $x=a, b$ with $\ell_a,\ell_b<\infty$, let $\ell=\min\{\ell_a,\ell_b\}$, $N=\max\{\ell_a,\ell_b\}$, and denote by $T$ either the unique self-adjoint realization of $\tau$ if $\ell_a,\ell_b\neq0$ or the Friedrichs realization otherwise. Then for each choice of $N$ real numbers satisfying $\lambda_N<\dots<\lambda_{1}<\inf\sigma(T)$ there is a Nevanlinna--Herglotz function $m_{\widehat T_N}$ and $\ell+1$ constants $C_j=C_j(\lambda_j)\in\R$, $j = 1, \dots, \ell+1$ such that the Weyl $m$-function $m_T$ in \eqref{mFunction} can be written as
\begin{align}\label{Eq:mT}
m_{T}(z)=\Bigg[\prod_{i=\ell+1}^{N}(z-\lambda_i)^{(-1)^{i-\ell+1}}\Bigg]\Bigg[\prod_{j=1}^{\ell}(z-\lambda_j)\Bigg]m_{\widehat T_{N}}(z)+\sum_{k=1}^{\ell+1}C_k\prod_{n=1}^{k-1}(z-\lambda_n), \notag\\
z \in \C \setminus \R.
\end{align}
Moreover, if $|\ell_a-\ell_b|$ is even, then $m_{\widehat T_{N}}$ can be understood as the $m$-function for the Friedrichs realization, $\widehat T_{N}$, of a $N$-times Darboux transformed quasi-regular $\widehat \tau_{N}$. If $|\ell_a-\ell_b|$ is odd, then $m_{\widehat T_{N}}$ can be understood as the $m$-function for a self-adjoint realization with Dirichlet boundary condition at the endpoint with larger index, and a Neumann-type boundary condition at the other endpoint.

In both cases, $\sigma(\widehat T_{N})=\sigma(T)\cup\{\lambda_j\}_{j=1}^\ell \cup\{\lambda_{n}\}_{n\in S}$ where $S=\{\ell+1,\ell+3,\dots, \ell+1+2\lfloor (N-\ell-1)/2\rfloor\}$ so that the number of eigenvalues added during the regularization process is $\lfloor (\ell_a+\ell_b+1)/2\rfloor$.
\end{proposition}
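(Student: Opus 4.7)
The plan is to promote the step-by-step regularization outlined just before the statement into a formal proof, invoking Proposition \ref{Prop:Nevanlinna1} for the first phase and arguing inductively for the second. Without loss of generality assume $\ell_a \leq \ell_b$, so $\ell = \ell_a$ and $N = \ell_b$ (the opposite case follows by swapping roles of $a$ and $b$). Proposition \ref{Prop:Nevanlinna1} produces $\ell$ Darboux transforms with seeds nonprincipal at both endpoints of the form \eqref{seedFunction} and yields the identity \eqref{hatmFunction1}. By Theorem \ref{Thm:Darboux} this gives regularization indices $(\widehat\ell_{a,\ell}, \widehat\ell_{b,\ell}) = (0, N-\ell)$; by Corollary \ref{cor:cannorm} the system $\widehat \varphi_{c,\ell}, \widehat \theta_{c,\ell}$ remains naturally normalized at $c \in \{a,b\}$; and by Proposition \ref{PropPhiTheta} both $\widehat \varphi_{a,\ell}$ and $\widehat \varphi_{b,\ell}$ are principal at their respective endpoints.

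Next I would iterate $N-\ell$ additional Darboux transforms indexed by $k = 1, \dots, N-\ell$. The crucial observation from Proposition \ref{PropPhiTheta} is that the principal/nonprincipal character of $\widehat \varphi_{a,\ell+k}$ alternates with each step because $a$ is limit circle and every seed is nonprincipal at $a$, whereas $\widehat \varphi_{b,\ell+k}$ remains principal at $b$ throughout (since $b$ stays limit point for $k < N-\ell$, so case $(ii)$ of Proposition \ref{PropPhiTheta} applies at $b$). At odd $k$, when $\widehat \varphi_{a,\ell+k-1}$ is principal, I would take the seed to be the associated nonprincipal solution $\widehat \theta_{a,\ell+k-1}(\lambda_{\ell+k}, \cdot)$, absorbing an additive shift $C_{\ell+k}\widehat \varphi_{a,\ell+k-1}$ only at $k = 1$ to enforce nonprincipality at $b$ as in \eqref{seedFunction}; for $k \geq 3$ odd, \cite[Cor.~2.4]{GST} applied with $\lambda_{\ell+k} < \inf \sigma(\widehat T_{\ell+k-1})$ directly guarantees both nonvanishing on $(a,b)$ and nonprincipality at $b$ without any shift. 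At even $k$, when $\widehat \varphi_{a,\ell+k-1}$ is nonprincipal, I would take $\psi_{\ell+k} = \widehat \varphi_{a,\ell+k-1}(\lambda_{\ell+k}, \cdot)$. Applying $\tfrac{1}{z-\lambda_{\ell+k}} A_{\psi_{\ell+k}}$ at odd $k$ and $A_{\psi_{\ell+k}}$ at even $k$ multiplies the current $m$-function by $(z-\lambda_{\ell+k})^{-1}$, respectively $(z-\lambda_{\ell+k})$, which reproduces the exponent pattern $(-1)^{i-\ell_a}$ of \eqref{hatmFunction5}; inversion yields \eqref{Eq:mT}. The eigenvalues added are the $\ell$ from Phase 1 together with those $\lambda_i$ introduced at odd offsets in Phase 2, producing the index set $S$ and a total count of $\ell + \lceil (N-\ell)/2 \rceil = \lfloor (\ell_a + \ell_b + 1)/2 \rfloor$.

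To identify the remaining factor $m_{\widehat T_N}$ as Nevanlinna--Herglotz, note that after $N$ transforms both indices are zero, and Corollary \ref{cor:cannorm} guarantees the final system is naturally normalized. If $|\ell_a - \ell_b|$ is even, the parity of $\widehat \varphi_{a,N}$ has flipped an even number of times and is again principal, matching $\widehat \varphi_{b,N}$, so $\widehat T_N$ is the Friedrichs realization of the quasi-regular expression $\widehat \tau_N$ by \cite[Thm.~4.7]{GLN20}. If $|\ell_a - \ell_b|$ is odd, $\widehat \varphi_{a,N}$ is nonprincipal while $\widehat \varphi_{b,N}$ remains principal, yielding a Neumann-type condition at $a$ and Dirichlet at $b$. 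In either case the standard normalization at a limit circle endpoint, cf.~\cite[Eq.~(5.12)]{GLN20}, shows $m_{\widehat T_N}$ is Nevanlinna--Herglotz. The main obstacle is the careful bookkeeping of the alternating parity at $x = a$ and of when an additive shift is required in the seed function; once this pattern is pinned down, the algebraic identity \eqref{Eq:mT} follows by inverting the cumulative Darboux product.
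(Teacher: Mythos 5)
Your proposal is correct and follows essentially the same route as the paper: the paper's proof is simply a reference to the step-by-step regularization discussion preceding the statement (first $\ell$ transforms via Proposition \ref{Prop:Nevanlinna1}, then the alternating-parity phase with seeds chosen according to the principal/nonprincipal character of $\widehat\varphi_{a,\ell+k-1}$), which is exactly what you have formalized. The only slight imprecision is that for odd $k\geq 3$ the application of \cite[Cor.~2.4]{GST} should be made relative to the self-adjoint realization with the $\widehat\theta_{a,\ell+k-1}$-boundary condition at $a$ (whose lowest eigenvalue is $\lambda_{\ell+k-1}$), not to $\widehat T_{\ell+k-1}$, but the assumed ordering $\lambda_N<\dots<\lambda_1$ supplies the needed inequality in any case.
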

\begin{proof}
Follows from the previous discussion combined with Prop.~\ref{Prop:Nevanlinna1}.
\end{proof}

As both endpoints of $\widehat\tau_N$ are now in the limit circle nonoscillatory case, a corresponding regular expression can be found by utilizing \cite[Thm. 8.3.1]{Ze05}, effectively regularizing $\tau$. Once again, since the seed function was chosen to be nonprincipal at both endpoints at each step, this is the optimal method to regularize both endpoints in the sense of the least number of transforms and adding the fewest eigenvalues by Corollary \ref{CorDarb}. We also point out that the analog of Remark \ref{Remarkm} is true in this more general case as well, that is, the sum in \eqref{Eq:mT} can be removed.

We end by posing the following question:

\begin{problem}  
If both $\ell_a,\ell_b$ are infinite $($or one is infinite and the other is not defined but the problem is still nonoscillatory for all $\lambda\in\R$$)$, is the $m$-function corresponding to naturally normalized systems in some larger special class of functions than generalized Nevanlinna--Herglotz functions?
\end{problem}

Given the relation $\kappa = \floor{(\ell+1)/2}$, we would expect the $m$-functions when $\ell = \infty$ to display superpolynomial growth. Moreover, the corresponding special class of functions would have to be invariant under the addition of arbitrary entire functions which are real on the real line by Remark \ref{RemThetaUnique}.
\section{Examples}\label{sectexamples}

We now turn to a few examples  for which we can determine the regularization indices and, in some cases, write down naturally normalized systems explicitly. We begin by working out the generalized Bessel equation in full detail, illustrating our previous results. We then discuss the Jacobi differential equation, a Mie-type potential on a finite interval, power potentials on $(0,\infty)$, and end with the Laguerre differential equation.

\subsection{Generalized Bessel equation}\label{subBessel}
We start by recalling the generalized Bessel equation following the analysis in \cite{GNS21} (see also \cite{FGKLNS21}, \cite{GLNPS21}, and \cite[Sect. 8.4]{GNZ23}).
Let $a=0$, $b\in(0,\infty)$, and consider
\begin{align*}
\begin{split}
\tau_{\d,\nu,\g} = x^{-\d}\left[-\frac{d}{dx}x^\nu\frac{d}{dx} +\frac{(2+\d-\nu)^2\g^2-(1-\nu)^2}{4}x^{\nu-2}\right],\\
2+\d-\nu>0,\quad \g\geq0,\quad x\in(0,b),  
\end{split}
\end{align*}
which is possibly singular at the endpoint $x=0$ (depending on parameter choices) and always regular at $x=b$ when $b\in(0,\infty)$. (As we are concerned with the endpoint $x=0$ here, the case $b=\infty$ can be treated similarly by simply replacing $b$ with some $c\in(0,\infty)$ throughout.) We recall some basic facts about this equation and the interested reader is directed to \cite[Sect. 8.4]{GNZ23} for more details.

This problem is nonoscillatory at $x = 0$ (and $x = b$) for all $\lambda\in \R$ under the parameter choices above, and principal and nonprincipal solutions at $x=0$ with $\lambda = 0$ are given by
\begin{align*}
u_{0;\d,\nu,\g}(0,x)&=x^{[1-\nu+\g(2+\d-\nu)]/2},\quad \gamma \in [0,\infty), \\
\np_{0;\d,\nu,\g}(0,x)&=\begin{cases}
\displaystyle \dfrac{1}{\g (2+\d-\nu)}x^{[1-\nu-\g(2+\d-\nu)]/2},& \g\in(0,\infty),\\[3mm]
\displaystyle \ln(1/x)x^{(1-\nu)/2},& \g=0,
\end{cases} \\
&\hspace{2.8cm} 2+\d-\nu>0,\quad x\in(0,b).
\end{align*}
The nonprincipal solution behavior shows that $\tau_{\d,\nu,\g}$ is in the limit circle case at $x=0$ if $0\leq\g<1$ and in the limit point case at $x=0$ when $\g\geq1$.
Furthermore, one readily verifies that Hypothesis \ref{Hypothesis} holds at $x=0$ for this example as multiplying $u$, $\np$, and $r$, and integrating, yields a multiple of $x^{2+\d-\nu}$ (times a log term when $\g=0$), which has positive power under the parameter assumptions.

Next, one can construct $\varphi_{n;\d,\nu,\g}(0,x)$ by choosing $\varphi_{0;\d,\nu,\g}(0,x)=u_{0;\d,\nu,\g}(0,x)$ and then iterating the recursion given in \eqref{phin}. For instance, one easily finds
\begin{align*}
\varphi_{1;\d,\nu,\g}(0,x)&=-\frac{1}{(1+\g)(2+\d-\nu)^2}x^{2+\d-\nu+([1-\nu+\g(2+\d-\nu)]/2)}\\
&=-\frac{1}{(1+\g)(2+\d-\nu)^2}x^{2+\d-\nu}\varphi_{0;\d,\nu,\g}(0,x).
\end{align*}
Similarly, to construct $\theta_{n;\d,\nu,\g}(0,x)$, one can first choose $\theta_{0;\d,\nu,\g}(0,x)=\np_{0;\d,\nu,\g}(0,x)$ and then iterate \eqref{thetaNformula} while choosing $A_n=-C_n$, where $C_n$ is defined implicitly by
\begin{align*}
    \int_x^c\theta_0(t)\theta_{n-1}(t)r(t)dt = D_n x^{(2+\d-\nu)n+([1-\nu+\g(2+\d-\nu)]/2)} + C_n.
\end{align*}
In general, iterating the recursions yields a pattern that can then be proven by induction. In particular, by denoting with $H_{k}$ the $k$-th harmonic number,
\begin{equation*}
H_0=0,\quad H_k=\sum_{j=1}^k\dfrac{1}{j},
\end{equation*}
one now finds for $2+\d-\nu>0,\; \gamma \in [0,\infty),\ x\in(0,b)$,
\begin{align*}
\varphi_{n;\d,\nu,\g}(0,x)&=\dfrac{(-1)^n\Gamma(1+\g)}{(2+\d-\nu)^{2n}n!\Gamma(n+1+\gamma)}x^{(2+\d-\nu)n+([1-\nu+\g(2+\d-\nu)]/2)}, \\
\theta_{n;\d,\nu,\g}(0,x)&=\begin{cases}
\displaystyle \dfrac{(-1)^n \Gamma(1-\g)}{\g (2+\d-\nu)^{2n+1}n!\Gamma(n+1-\g)}x^{(2+\d-\nu)n+([1-\nu-\g(2+\d-\nu)]/2)},\\[2mm]
\hspace*{7.2cm}\g\in(0,\infty)\backslash\N,\\[3mm]
\displaystyle \dfrac{(-1)^n [2H_n-(2+\d-\nu)\ln(x)]}{ (2+\d-\nu)^{2n+1}(n!)^2}x^{(2+\d-\nu)n+[(1-\nu)/2]},\\[2mm]
\hspace*{8.4cm}\g=0.
\end{cases}
\end{align*}
We point out that for brevity we have not included the remaining logarithmic $\theta_{n;\d,\nu,k}$ terms that occur when considering $\g=k\in\N$. The case $\gamma=0$ is included to illustrate the main difference in these cases.
These expressions allow one to readily find that $\ell_0=\floor{\gamma}$ and $\theta_{n;\d,\nu,\g}\in L^2((0,b);x^\d dx)$ for $n>(\gamma-1)/2$, $\gamma\in[0,\infty)\backslash\N$. Moreover, $\theta_{n;\d,\nu,\g}$ was constructed to satisfy the normalization \eqref{normTheta} for $n > \ell_0$, implying that the resulting $\theta_{\delta, \nu, \gamma}(z,x)$ will be naturally normalized.

Finally, one concludes that the following expressions hold for the solutions $\varphi,\theta$:
\begin{align*}
\varphi_{\d,\nu,\g}(z,x)&=\sum_{n=0}^\infty \dfrac{(-1)^n x^{(2+\d-\nu)n+([1-\nu+\g(2+\d-\nu)]/2)}\Gamma(1+\g)}{(2+\d-\nu)^{2n}n!\Gamma(n+1+\gamma)}z^n,\quad \g\in[0,\infty), \\
\theta_{\d,\nu,\g}(z,x)&=\begin{cases}
\displaystyle \sum_{n=0}^\infty \dfrac{(-1)^n  x^{(2+\d-\nu)n+([1-\nu-\g(2+\d-\nu)]/2)}\Gamma(1-\g)}{\g (2+\d-\nu)^{2n+1}n!\Gamma(n+1-\g)}z^n,\\[2mm]
\hspace*{6.85cm}\g\in(0,\infty)\backslash\N,\\[2mm]
\displaystyle \sum_{n=0}^\infty \dfrac{(-1)^n [2H_n-(2+\d-\nu)\ln(x)]x^{(2+\d-\nu)n+[(1-\nu)/2]}}{ (2+\d-\nu)^{2n+1}(n!)^2}z^n,\\[2mm]
\hspace*{8.4cm}\g=0,
\end{cases} \\
&\hspace{4.7cm} 2+\d-\nu>0,\quad x \in (0,b),\quad z\in\C.
\end{align*}
Up to multiples, these solutions are identical with the ones given in \cite[Sect.~6]{GNS21} constructed out of the Bessel functions $J_{\pm\gamma}$ (and $Y_0$ for $\gamma = 0$). This fact can be seen from the series expansions of $J_{\pm\gamma}$, $Y_0$ around $0$ (see \cite[Eqs.~10.2.2, 10.8.2]{DLMF}).

\medskip

For the next few examples, we focus on the leading behavior of the principal and nonprincipal solutions only to illustrate our main theory.

\subsection{Jacobi equation}\label{subJac}

This example considers the Jacobi differential equation. See \cite{GLPS23} for more details and an extensive list of references (see also \cite[Sects. 9 and 23]{Ev05}). In particular, we consider the Jacobi differential expression 
\begin{align*}
\begin{split} 
\tau_{\a,\b} = - (1-x)^{-\a} (1+x)^{-\b}(d/dx) \big((1-x)^{\a+1}(1+x)^{\b+1}\big) (d/dx),&     \\ 
\a, \b \in \R, \quad x \in (-1,1).
\end{split} 
\end{align*}
This example follows from Remark \ref{remarkexamples} $(ii)$ by setting $\nu_{-1}=\b+1,\ \d_{-1}=\b$ and $\nu_1=\a+1,\ \d_1=\a$ for the endpoints $x=\pm1$ to arrive at $\ell_{-1} = \floor{|\beta|}$ and $\ell_{+1} = \floor{|\alpha|}$. 
Therefore, we recover the well-known fact that this equation is in the limit circle case at both endpoints if and only if $\a,\b\in(-1,1)$.

It now follows that $\tau_{\alpha, \beta}$ can be transformed to a Sturm--Liouville differential expression which is in the limit circle case at both endpoints via a sequence of $\max\{ \floor{|\alpha|}, \floor{|\beta|} \}$ Darboux transforms (and no shorter sequence will achieve this). Explicit examples of Darboux transforms of the Jacobi equation appear naturally in the study of exceptional Jacobi polynomials (see \cite{G-UKM}, \cite[Sect.~5]{G-UMM}; for the case of Darboux--Crum transformations see \cite{B19}). 

\subsection{Mie-type (incl.~inverse quartic) potentials on a finite interval}\label{MiePotential} 
We will now consider an example of a Schr\"odinger equation on a finite interval with $\ell_0 = \infty$ and Weyl asymptotics. Consider the Schr\"odinger differential expression given by
\begin{align*}
    \tau_\mu = -\frac{d^2}{dx^2} + \frac{\mu^2}{x^{2\mu+2}}+\frac{\mu(1-\mu)}{x^{\mu+2}}, \qquad  \mu > 0, \quad x \in (0,b).
\end{align*}
Note that in the special case $\mu = 1$ we recover the inverse quartic potential studied in \cite{LN16}. One can compute that $\varphi_{0;\mu}(0,x) = x\exp(-x^{-\mu})$ solves $\tau y = 0$.
A linearly independent solution is given by $\theta_{0;\mu}(0,x) = x \exp(-x^{-\mu}) \int_x^ct^{-2}\exp(2t^{-\mu}) dt$, which is nonprincipal. Moreover, $W(\theta_{0;\mu}(0, \, \cdot \,), \varphi_{0;\mu}(0, \, \cdot \,)) = 1$ holds.  Note that
\begin{align*}
    \varphi_{0;\mu}(0,x)\theta_{0;\mu}(0,x) = \int_x^c \underbrace{\frac{x^2}{t^2}\exp\big(2[t^{-\mu} - x^{-\mu}]}_{\leq 1}\big) dt,
\end{align*}
which is bounded, hence integrable, at $x = 0$. Thus Hypothesis \ref{Hypothesis} is satisfied.

Let $\theta_{n;\mu}(0,x)$ be given through the recursion \eqref{thetaNformula} with $B_n = 0$ and arbitrary $A_n \in \R$. 
One can now show inductively that
\begin{align*}
|\theta_{n;\mu}(0,x)| \gtrsim c_n |\theta_{0;\mu}(0,x) x^{n(\mu+2)}|, \quad x \to 0^+,
\end{align*}
for some sequence $c_n > 0$, cf. \cite[Lem.~8]{LN16}. As $\theta_{0;\mu}(0,x) \propto x^\mu \exp(x^{-\mu})$ diverges exponentially for $x \to 0^+$, so does $\theta_{n;\mu}(0, x)$ for all $n \geq 0$, and we conclude that $\ell_0 = \infty$. At the same time, it follows from Lemma \ref{LemmaWeyl} that the eigenvalues $\lambda_n$ of any self-adjoint realization of $\tau_\mu$ satisfy Weyl asymptotics.

\subsection{Power function potentials on the half-line}\label{subPoly}
The equation considered here is an example on an infinite interval with $\ell_\infty=\infty$ that has eigenvalues satisfying the growth $n^\g$ for any $\g\in(1,2)$ (i.e., trace class resolvent but slower growth than Weyl given in \eqref{eq:Weyl}). Consider the half-line Schr\"odinger differential expression
\begin{equation*}
\tau_\alpha=-\frac{d^2}{dx^2}+x^\alpha,\qquad \alpha>0,\quad x\in(0,\infty).
\end{equation*}
Note that the problem $\tau_\alpha y=\lambda y$ is limit point and nonoscillatory for all $\lambda\in\R$ at $x=\infty$. Specializing to the case $\lambda=0$ here for simplicity, linearly independent solutions to $\tau_\alpha y=0$ are given by (see \cite[Eqs.~10.40.1 and 10.40.2]{DLMF})
\begin{align*}
\varphi_{0;\alpha}(0,x)&=x^{1/2} K_{\frac{1}{\alpha+2}}\big(\big[2x^{(2+\alpha)/2}\big]/(\alpha+2)\big) \underset{x\to\infty}{\propto} x^{-\alpha/4} e^{-\frac{2}{2+\alpha}x^{(2+\alpha)/2}}, \\
\theta_{0;\alpha}(0,x)&=-(2/(\alpha+2))x^{1/2} I_{\frac{1}{\alpha+2}}\big(\big[2x^{(2+\alpha)/2}\big]/(\alpha+2)\big) \underset{x\to\infty}{\propto} x^{-\alpha/4} e^{\frac{2}{2+\alpha} x^{(2+\alpha)/2}},
\end{align*}
where $I_\nu, K_\nu$ are the typical modified Bessel functions (see \cite[Sect. 10.25]{DLMF}), which allows one to directly verify that $x=\infty$ is in the limit point case as only the first solution is square-integrable near $\infty$. One can verify that $W(\theta_{0;\alpha}(0, \, \cdot \,), \varphi_{0;\alpha}(0, \, \cdot \,)) = 1$ by applying the Wronskian given in \cite[Eq.~10.28.2]{DLMF}.

The asymptotic behavior of the solutions shows that Hypothesis \ref{Hypothesis} holds if and only if $\alpha>2$. Furthermore, when iteratively constructing $\theta_\alpha(z,x)$ using \eqref{thetaNformula}, the terms $\theta_{n;\alpha}(0,x)$ will always include an exponentially growing term for $x\to\infty$. This allows one to immediately conclude that $\ell_\infty=\infty$ for $\alpha>2$.

We end this example by remarking that the eigenvalue asymptotics for the problem $\tau_\alpha y=zy$ are explicitly given by (see \cite[Eq. (7.1.7)]{Ti62})
\begin{equation*}
\lambda_n\underset{n\to\infty}{\sim} \bigg[\frac{2\pi^{1/2} \alpha \Gamma(\frac{3}{2}+\frac{1}{\alpha})}{\Gamma(\frac{1}{\alpha})}\bigg]^{2\alpha/(\alpha+2)} n^{2\alpha/(\alpha+2)},\qquad \alpha>0,
\end{equation*}
which satisfies growth $n^\g$ for any $\g\in(1,2)$ under the additional assumption $\alpha>2$ required above.

\subsection{Laguerre equation}\label{subLag} 
For our final example, we consider the Laguerre equation which serves more as a non-example in the sense that Hypotheses \ref{hypothesisTrace} and \ref{Hypothesis} are not satisfied and eigenvalues of this equation grow like $n$ (see \cite[Sects.~10 and 27]{Ev05}, \cite[Sect.~6]{GLN20}, \cite[App.~D]{GLPS23}, and \cite[Ch.~13]{DLMF}).
The form of the Laguerre differential expression we will study is given by 
\begin{equation*}
\tau_\a = -x^{1-\a}e^x \frac{d}{dx}x^{\a}e^{-x} \frac{d}{dx}, \qquad  \a \in (0,\infty)\backslash\N,\quad x \in (0,\infty).
\end{equation*}
We will be interested in the endpoint $x = \infty$ as $\tau_\alpha|_{(c, \infty)}$ does not have trace class resolvents, meaning that Hypothesis \ref{hypothesisTrace} (and hence Hypothesis \ref{Hypothesis}) is not satisfied at this endpoint. Again, for simplicity we have restricted the set of admissible $\alpha$, however the general case $\alpha \in \R$ can be handled in a similar fashion.

Linearly independent solutions of $\tau_\a y=\lambda y$ with $\lambda\in\R$ are then given by (see \cite[Eqs.~13.2.6 and 13.2.26]{DLMF})
\begin{align*}
U(-\lambda,\a;x) \underset{x\to\infty}{\propto} x^\lambda, \quad
x^{1-\a} {}_1F_1(1-\a-\lambda, 2-\a; x)\underset{x\to\infty}{\propto} x^{-\lambda-\a}e^x ,\quad (\lambda+\a)\notin \N,
\end{align*}
where ${}_1F_1$ is the confluent hypergeometric function (also frequently denoted by $M$), $U$ the associated logarithmic solution, and the asymptotic behavior follows from \cite[Eqs.~13.7.1 and 13.7.3]{DLMF}. (For brevity, we once again remove some parameter choices that require different solutions to be chosen.) We immediately see the $\lambda$-dependence in the asymptotics as predicted by Theorem \ref{TFAE}. Also, we can directly verify that Hypothesis \ref{Hypothesis} does not hold at $\infty$ since multiplying the lead behaviors by $r(x)=x^{\a-1} e^{-x}$ gives $x^{-1}$, which is not integrable near $\infty$. The Wronskian of the two solutions can be computed via \cite[Eq.~13.2.36]{DLMF}.

Note that an entire system of solutions $\widetilde \varphi$, $\widetilde \theta$ exists by \cite{GZ06} as Hypothesis \ref{HypoGZ} is satisfied. In particular, one has for $\lambda\in\R$,
\begin{align}\label{LaguAsym}
\begin{split}
\widetilde \varphi(\lambda,x)&\underset{x\to\infty}{\propto} x^{\lambda}\in L^2((c,\infty); x^{\a-1} e^{-x} dx), \\
\widetilde \theta(\lambda,x)&\underset{x\to\infty}{\propto} x^{-\lambda-\a}e^x\notin L^2((c,\infty); x^{\a-1} e^{-x} dx),
\end{split}
\end{align}
where we dropped the $\alpha$-dependance for brevity. Again, the tilde indicates that $\widetilde \varphi$, $\widetilde \theta$ do not satisfy any normalization except being entire in $z$ and principal, respectively nonprincipal, at $x = \infty$. In fact, our standard normalization \eqref{phiNorm} cannot hold for this example due to the lead behavior differing depending on the choice of $z\in \C$ (as it has to by Theorem \ref{TFAE}) since for $z_1\neq z_2$ one has
\begin{equation*}
\frac{\widetilde \varphi(z_1,x)}{\widetilde \varphi(z_2,x)}\underset{x\to\infty}{\propto} x^{z_1-z_2},\qquad z_1,z_2\in\C,
\end{equation*}
and similarly for $\widetilde \theta$.

We can now expand $\widetilde \varphi$, $\widetilde \theta$ around $\lambda \in \R$: 
\begin{align}\label{BornLaguerre}
\widetilde \varphi(z,x) = \sum_{n \geq 0} \widetilde \varphi_n(\lambda, x)(z-\lambda)^n, \qquad \widetilde \theta(z,x) = \sum_{n \geq 0} \widetilde \theta_n(\lambda, x)(z-\lambda)^n.
\end{align}
By Lemma \ref{LongLem} we know that $\widetilde \varphi_n$, $\widetilde \theta_n$ will satisfy the recursion \eqref{y_n}. However, as Hypothesis \ref{Hypothesis} is not satisfied, we cannot use the recursions \eqref{phin} and \eqref{thetaNformula} anymore as some of the integrals will not converge. Instead we need to modify these recursions by replacing the integration limit $a$ by some $d \in (a,b)$ in order to find the behavior of the coefficients of the Born series for this example. This results in the following recursions: 
\begin{align}\nonumber
        \widetilde\varphi_{n}(x) &= A_{n} \widetilde\varphi_0(x) + B_n \widetilde\theta_0(x) 
        \\ \label{phinformula}
        &\quad + \widetilde\theta_0(x)\int_d^x \widetilde\varphi_0(t) \widetilde\varphi_{n-1}(t) r(t) dt + \widetilde\varphi_0(x) \int_x^c \widetilde\theta_0(t) \widetilde\varphi_{n-1}(t) r(t) dt,\\
        \widetilde\theta_{n}(x) &= A'_n \widetilde\varphi_0(x) + B'_n \widetilde\theta_0(x)  \nonumber
        \\\label{thetanformula}
        &\quad + \widetilde\theta_0(x)\int_{d'}^x \widetilde\varphi_0(t) \widetilde\theta_{n-1}(t) r(t) dt + \widetilde\varphi_0(x) \int_x^{c'} \widetilde\theta_0(t) \widetilde\theta_{n-1}(t) r(t) dt,\\
        &\quad c,d,c',d'\in(a,b),\quad A_n,B_n,A_n',B_n'\in\R. \nonumber
\end{align}
Care needs to be taken in constructing $\widetilde \varphi_n$ in this setting to ensure that $\widetilde \varphi$ is still principal. For instance, the first term containing an integral in \eqref{phinformula} will behave like a constant times $\widetilde \theta_0$ for $x \to \infty$, meaning that $B_n$ must be chosen to cancel this behavior as otherwise $\widetilde \varphi$ would in fact behave like $\widetilde \theta_0$, contradicting $\widetilde \varphi$ being principal. 

Iterating \eqref{phinformula} and \eqref{thetanformula} for this example with $\widetilde \varphi_{0}$ and $\widetilde \theta_{0}$ as above (with appropriately chosen $B_n$) yields
\begin{align*}
\widetilde \varphi_{n}(\lambda,x)&\underset{x\to\infty}{\propto} [\ln(x)]^n x^{\lambda}\in L^2((c,\infty); x^{\a-1} e^{-x} dx), \\
\widetilde \theta_{n}(\lambda,x)&\underset{x\to\infty}{\propto} [\ln(x)]^n x^{-\lambda-\a}e^x\notin L^2((c,\infty); x^{\a-1} e^{-x} dx),\qquad n\in\N_0,\quad \lambda\in\R.
\end{align*}
We see that in the absence of Hypothesis \ref{Hypothesis} the coefficients $\widetilde \varphi_{n}$ and $\widetilde \theta_{n}$ near $x=\infty$ can grow in $n$ as opposed to \eqref{PhiRhoEst} and \eqref{ThetaLimit}, respectively. This behavior is expected by Theorem \ref{TFAE} as higher order terms in the Born series \eqref{BornLaguerre} cannot be neglected and must contribute to the lead behavior near $x = \infty$. This is true in general whenever Hypothesis \ref{Hypothesis} (or equivalently Hypothesis \ref{hypothesisTrace}) is not satisfied, as explained in Remark \ref{RemarkBorn}.

\appendix
\section{Proofs of Lemma \ref{LongLem} and Theorem \ref{TFAE}}\label{AppendixA}

This appendix contains the more technical proofs not included in the main text.
We begin with a complete proof for Lemma \ref{LongLem}, which requires more technical arguments and will need the following standard result (cf.~\cite[Thm. 1.6.1]{Ze05}). 
\begin{lemma}\label{Banach}
    The following initial value problem,
    \begin{align*}
        \tau f = z f, \qquad f(z,x_0) =  y_0, \qquad f^{[1]}(z,x_0) = y_1,
    \end{align*}
    has for any $x_0 \in (a,b)$ and $y_0,y_1,z \in \mathbb C$ a unique solution $y \in AC_{loc}((a,b))$, $y^{[1]} \in AC_{loc}((a,b))$. Moreover, for any compact $K \subset (a,b)$, we have that the mapping  $z \mapsto y(z,x)|_{x\in K} \in L^\infty(K)$ is locally Lipschitz continuous.
\end{lemma}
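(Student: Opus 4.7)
The plan is to reduce the second-order equation $\tau f = zf$ to an equivalent first-order $2\times 2$ system for the vector $F(x) = (f(x), f^{[1]}(x))^T$, namely
\begin{align*}
F'(x) = M(z,x) F(x), \qquad M(z,x) = \begin{pmatrix} 0 & 1/p(x) \\ q(x)-zr(x) & 0 \end{pmatrix},
\end{align*}
which is in turn equivalent (for $F$ with both components in $AC_{loc}((a,b))$) to the integral equation
\begin{align*}
F(x) = F_0 + \int_{x_0}^x M(z,t) F(t)\, dt, \qquad F_0 = (y_0, y_1)^T.
\end{align*}
By Hypothesis \ref{HypoInt}, the entries $1/p$, $q$, $r$ all lie in $\Ll$, hence the operator norm $\|M(z,\cdot)\|$ is locally integrable on $(a,b)$ for each fixed $z \in \C$.

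Existence and uniqueness would be obtained via Picard iteration. Setting $F^{(0)}(x) \equiv F_0$ and $F^{(n+1)}(x) = F_0 + \int_{x_0}^x M(z,t) F^{(n)}(t)\, dt$, a straightforward induction on any compact $K \subset (a,b)$ containing $x_0$ yields
\begin{align*}
\big\| F^{(n+1)}(x) - F^{(n)}(x) \big\| \leq \frac{\|F_0\|}{n!} \bigg(\int_{x_0}^x \|M(z,t)\|\, dt\bigg)^n, \qquad x \in K.
\end{align*}
Hence $F^{(n)}$ converges uniformly on $K$ to a continuous limit $F$ satisfying the integral equation, and since $M(z,\cdot) F(\cdot) \in L^1_{loc}$, both components of $F$ lie in $AC_{loc}((a,b))$ — in particular $f^{[1]} \in AC_{loc}((a,b))$, as required. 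Uniqueness follows from Gr\"onwall's inequality applied to the difference of two candidate solutions.

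For the local Lipschitz dependence on $z$, fix $z_0 \in \C$, let $z$ range over a bounded neighborhood $U_0$ of $z_0$, and set $G(z,x) = F(z,x) - F(z_0,x)$. Subtracting integral equations gives
\begin{align*}
G(z,x) = \int_{x_0}^x M(z,t) G(z,t)\, dt - (z-z_0)\int_{x_0}^x \begin{pmatrix} 0 & 0 \\ r(t) & 0 \end{pmatrix} F(z_0,t)\, dt.
\end{align*}
Since $F(z_0,\cdot)$ is continuous on $K$ and $r \in \Ll$, the second term has $L^\infty(K)$-norm bounded by $C_K|z-z_0|$; an application of Gr\"onwall's inequality to the first term, using that $\int_K \|M(z,t)\|\,dt$ is bounded uniformly in $z \in U_0$, then yields $\|G(z,\cdot)\|_{L^\infty(K)} \leq C'_K |z-z_0|$, which is precisely the desired local Lipschitz estimate.

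The argument is standard in spirit but requires some care because the coefficients are only locally integrable rather than continuous, so the classical Picard--Lindel\"of theorem does not apply directly and one must work throughout with the integral formulation. This $L^1_{loc}$ version of the existence/uniqueness theory, together with the continuous dependence on parameters, is the content of Theorem~1.6.1 of the Zettl monograph already cited; the only thing the present lemma really adds is an explicit extraction of the Lipschitz (as opposed to merely continuous) dependence on the spectral parameter $z$, which comes for free from the Gr\"onwall estimate above.
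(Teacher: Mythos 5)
Your proposal is correct. The paper does not actually prove this lemma --- it is stated as a known result with a pointer to \cite[Thm.~1.6.1]{Ze05} --- and your Picard/Gr\"onwall argument on the equivalent first-order system is precisely the standard proof underlying that citation, with the Lipschitz dependence on $z$ extracted exactly as you describe (the difference of the two integral equations isolating the term $(z-z_0)\int r\,F(z_0,\cdot)$, then Gr\"onwall with $\int_K\|M(z,\cdot)\|$ bounded uniformly for $z$ in a bounded neighborhood). The only blemish is cosmetic: your Picard estimate is off by one in the index (the base case $n=0$ reads $\|F^{(1)}-F^{(0)}\|\le\|F_0\|$, which fails when $\int_{x_0}^x\|M\|>1$); the correct induction gives $\|F^{(n+1)}-F^{(n)}\|\le \|F_0\|\,m(x)^{n+1}/(n+1)!$ with $m(x)=\big|\int_{x_0}^x\|M(z,t)\|\,dt\big|$, which of course still sums to a uniformly convergent series.
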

\begin{proof}[Proof of Lemma \ref{LongLem}]
    $(i)$  First note by the assumption $a < c < d < b$ the differential expression $\tau|_{(c,d)}$ is regular at both endpoints. Consider some point $x_0 \in (c,d)$ and let $c(z,x)$, $s(z,x)$ be the entire system of solutions of $\tau f = z f$ satisfying 
    \begin{equation*}
s(z,x_0) = 0=c^{[1]}(z,x_0), \qquad  s^{[1]}(z,x_0) = 1=c(z,x_0).
\end{equation*}
By a standard uniqueness result for differential equations, we have
\begin{align*}
    y(z,x) = y(z,x_0)c(z,x) + y^{[1]}(z,x_0)s(z,x).
\end{align*}
As $y(z,\, \cdot \, )$, $s(z,\, \cdot \, )$ and $c(z,\, \cdot \,)$ are all holomorphic in $z$, it follows that $y^{[1]}(z,x_0)$ must be meromorphic in $z$. However, as $y^{[1]}\in AC_{loc}((a,b))$ for all $z \in U$, it is in fact holomorphic. In particular, $y(z,x_0)$, $y^{[1]}(z,x_0)$ are locally Lipschitz for $z \in U$. Hence, by the previous Lemma \ref{Banach}, the solution $y(z,\, \cdot \, )$ depends locally Lipschitz on $z \in U$ in the space $L^\infty((c,d); dx)$. 

We can now conclude that for any $z \in U$ there exists a Lipschitz constant $L_z > 0$ such that
\begin{align*}
    |y(z+h,x)-y(z,x)| \leq L_z|h|, \quad x \in (c,d) \, \text{ and } \, z+h \in U \, \text{ with } \, |h| < \varepsilon,
\end{align*}
for $\varepsilon > 0$ small enough. In particular, it follows by dominated convergence that
\begin{align*}
    \lim_{h \to 0} \frac{y(z+h,\, \cdot \,)-y(z,\, \cdot \,)}{h} = \partial_z y(z,\, \cdot \,) 
\end{align*}
in the space $L^2((c,d); r(x) dx)$ for any $z \in U$. This shows that the mapping $U \to L^2((c,d); r(x) dx)$, $z \mapsto y(z, \, \cdot \,)$ is an $L^2((c,d); r(x) dx)$-valued holomorphic mapping. Thus in the following, we can use the theory of Banach-valued holomorphic functions (see, e.g.,~\cite[Ch.~3]{HP57}). In particular, we have the series expansion \ref{yPowerSeries}, but with $y_n \in L^2((c,d); r(x) dx)$. To show that indeed $y_n \in \dom(T_{max}^{(c,d)})$ we choose a smooth contour $\gamma \subset U$ going once around the point $z_0 \in U$ in the counterclockwise direction. Then as $y(z, \, \cdot \,)$ and $T_{max}^{(c,d)} y(z, \, \cdot \,) = z y(z, \, \cdot \,)$ are Bochner integrable on $\gamma$,  we conclude by Hille's Theorem (see \cite[Thm. 3.7.12]{HP57}) that 
\begin{align*}
    y_n(z_0, \, \cdot \,) = \frac{1}{2\pi i} \oint_{\gamma} \frac{y(z, \, \cdot \,)}{(z-z_0)^{n+1}} \, dz \in \dom(T_{max}^{(c,d)})
\end{align*}
and for $n > 0$
\begin{equation*}
    (\tau-z_0)y_n(z_0, \, \cdot \,) = \frac{1}{2\pi i} \oint_{\gamma} \frac{(\tau-z_0)y(z, \, \cdot \,)}{(z-z_0)^{n+1}} \, dz 
    = \frac{1}{2\pi i} \oint_{\gamma} \frac{y(z, \, \cdot \,)}{(z-z_0)^{n}} \, dz = y_{n-1}(z_0, \, \cdot \,).
\end{equation*}
Here we used that $T_{max}^{(c,d)}$ is a closed operator as it is the adjoint of the minimal operator $T_{min}^{(c,d)}$. This shows \eqref{y_n} and finishes the proof of $(i)$.

$(ii)$ Let us define the truncated sums $y^{(M)}(z,x) = \sum_{n=0}^M y_n(z_0, x)(z-z_0)^n$. One then obtains for $M > 0$ and $z\in U$ chosen such that the series \eqref{yPowerSeries} converges in $L^2((c,d); r(x)dx)$ the formula 
\begin{align*}
    T_{max}^{(c,d)} y^{(M)}(z,\, \cdot \,) = z y^{(M-1)}(z, \, \cdot \,) + z_0 y_M(z_0, \, \cdot \,)(z-z_0)^M. 
\end{align*}
Note that $\lim_{M\to \infty} y^{(M)}(z, \, \cdot \,) = y(z, \, \cdot \,)$ and $\lim_{M\to \infty} y_M(z_0, \, \cdot \,)(z-z_0)^M = 0$ in $L^2((c,d); r(x)dx)$ by assumption. Again, as $T_{max}^{(c,d)}$ is a closed operator, it follows that $y(z, \, \cdot \,) \in \dom(T_{max}^{(c,d)})$ and $T_{max}^{(c,d)}y(z,\, \cdot \,) = \tau y(z,\, \cdot \,) = z y(z,\, \cdot \,)$. This finishes the proof of $(ii)$.
\end{proof}

Next we provide the proof of Theorem \ref{TFAE}.

\begin{proof}[Proof of Theorem \ref{TFAE}]
    $(i) \Rightarrow (ii)$: By Corollary \ref{CorNorm} we know that a principal entire solution $\varphi$ satisfying $\lim_{x \to a^+}\frac{\varphi(z_1,x)}{\varphi(z_2,x)} = 1$ exists. By the uniqueness of the principal solution up to real multiples, we know that there is an entire non-vanishing function $f$ such that $\widetilde \varphi(z,x) = f(z) \varphi(z,x)$. The claim follows.

    $(ii) \Leftrightarrow (iii)$: Note that as $\widetilde \varphi$ is principal, we must have (cf.~Cor.~\ref{CorPrinc})
    \begin{align*}
        \lim_{x \to a^+}\frac{\widetilde \varphi(z_1,x)}{\widetilde \theta(z_2,x)} = 0 \ \text{ for all} \ z_1, z_2 \in \mathbb R.
    \end{align*}
    Moreover, we have the formula
    \begin{align}\label{WronskiLimit}
        \Big[W(\widetilde \theta(z_2,t), \widetilde \varphi(z_1,t))\Big]^{t=c}_{t=x} = (z_2-z_1) \int_x^c \widetilde \theta(z_2, t) \widetilde \varphi(z_1, t) r(t) dt,
    \end{align}
    which implies that the limit $\displaystyle\lim_{x \to a^+} W(\widetilde \theta(z_2, x), \widetilde \varphi(z_1, x))$ exists in the extended real line $\mathbb R \cup \lbrace \pm \infty \rbrace$.
    Hence we can apply L'H\^opital's rule to arrive at 
    \begin{align}\label{tildePhi}
        \lim_{x \to a^+} \frac{\widetilde \varphi(z_1,x)}{\widetilde \varphi(z_2,x)} = \lim_{x \to a^+} \frac{\Bigg(\cfrac{\widetilde \varphi(z_1,x)}{\widetilde \theta(z_2,x)}\Bigg)'}{\Bigg(\cfrac{\widetilde \varphi(z_2,x)}{\widetilde \theta(z_2,x)}\Bigg)'} &= \lim_{x \to a^+} \frac{\cfrac{W(\widetilde \theta(z_2,x), \widetilde \varphi(z_1,x))}{p(x)}}{\cfrac{W(\widetilde \theta(z_2,x), \widetilde \varphi(z_2,x)) \equiv 1}{p(x)}} \notag
        \\
        &=\lim_{x \to a^+} W(\widetilde \theta(z_2,x), \widetilde \varphi(z_1,x)), 
    \end{align}
    showing the equivalence of $(ii)$ and $(iii)$.

    $(iii) \Leftrightarrow (iv)$: Similar as before we obtain
    \begin{align}\nonumber
        \lim_{x \to a^+} \frac{\widetilde \theta(z_1,x)}{\widetilde \theta(z_2,x)} = \lim_{x \to a^+} \frac{\Bigg(\cfrac{\widetilde \varphi(z_1,x)}{\widetilde \theta(z_2,x)}\Bigg)'}{\Bigg(\cfrac{\widetilde \varphi(z_1,x)}{\widetilde \theta(z_1,x)}\Bigg)'} &= \lim_{x \to a^+} \frac{\cfrac{W(\widetilde \theta(z_2,x), \widetilde \varphi(z_1,x))}{p(x)}}{\cfrac{W(\widetilde \theta(z_1,x), \widetilde \varphi(z_1,x)) \equiv 1}{p(x)}}
        \\\label{tildeTheta}
        &=\lim_{x \to a^+} W(\widetilde \theta(z_2,x), \widetilde \varphi(z_1,x)).
    \end{align}

    $(ii)+(iii)+(iv) \Rightarrow (v)$: Note that due to formula \eqref{WronskiLimit} and the assumption that the limit $\lim_{x \to a^+} W(\widetilde \theta(z_2,x), \widetilde \varphi(z_1,x))$ exists, together with $\widetilde \theta$, $\widetilde \varphi$ being nonoscillatory at the endpoint $x = a$, condition $(v)$ follows for $z_1 \not = z_2$. It then follows for $z_1 = z_2$ due to $(ii)$ and $(iv)$. 

    $(v) \Rightarrow (i)$: This implication is trivial. 
    
    We have thus shown that $(i)$--$(v)$ are all equivalent. 
    Clearly for some fixed $z_1, z_2 \in \R$ with $z_1  \not = z_2$, the statements of $(ii)$--$(iv)$ are again pairwise equivalent. But then together they imply via \eqref{WronskiLimit} that $(i)$ holds for $\lambda = z_1, z_2$.
    
    It remains to deal with $(vi)$.

    $(vi) \Rightarrow (i)$: Assuming Hypothesis \ref{hypothesisTrace} at $x = a$, one can write the trace of the resolvent of any self-adjoint realization $T$ of $\tau|_{(a,c)}$, $c\in(a,b),$ with separated boundary conditions as
    \begin{equation}\label{trace}
    \textrm{Tr}_{L^2((a,c);r(x)dx)}\big((T-zI)^{-1}\big)=\int_a^c G(z,x,x)r(x) dx,\qquad z\in\rho(T),
    \end{equation}
    where $G(z,x,x)$ is the diagonal Green's function.
    
    In the special case that $T$ is the Friedrichs extension, the diagonal of the Green's function takes on the form
    \begin{equation}\label{diagGreens}
    G(z,x,x)=\widetilde\varphi(z,x)[\widetilde\theta(z,x)+m(z)\widetilde\varphi(z,x)],\qquad z\in\rho(T),
    \end{equation}
    where, as usual, $\widetilde \varphi$, $\widetilde \theta$ are principal, respectively nonprincipal, entire solutions at $x = a$ satisfying $W(\widetilde \theta, \widetilde \varphi) \equiv 1$, and $m(z)$ is the Weyl $m$-function for $T$ (see Section \ref{SectWeylm}). As \eqref{trace} converges by assumption, \eqref{diagGreens} implies that $(i)$ holds for $\lambda\in\rho(T)\cap\R$.
    
    $(i) \Rightarrow (vi)$: Take any $\lambda \in \R$ such that $\tau f = \lambda f$ becomes nonoscillatory at $x = a$. Now choose solutions $u$, $\np$ of $\tau f = \lambda f$ on $(a,b)$, such that $u$ is principal and $\np$ is nonprincipal at $a$. Define $x_\varepsilon = a + \varepsilon < c$ with $\varepsilon > 0$ sufficiently small. Then $\tau|_{(x_\varepsilon, c)}$ is regular at both endpoints. Hence, for each $x_\varepsilon$ we can define a fundamental system $\phi_\varepsilon(z, x)$, $\vartheta_\varepsilon(z, x)$, $x \in (x_\varepsilon, c)$, entire in $z$, such that
    \begin{align*}
        \phi_\varepsilon(z, x_\varepsilon) &=u(x_\varepsilon) \qquad \phi_\varepsilon^{[1]}(z, x_\varepsilon) = u^{[1]}(x_\varepsilon)
        \\
        \vartheta_\varepsilon(z, x_\varepsilon) &=\np(x_\varepsilon) \qquad
        \vartheta_\varepsilon^{[1]}(z, x_\varepsilon) = \np^{[1]}(x_\varepsilon).
    \end{align*}
    for all $z \in \C$. In other words, $\phi_\varepsilon$, $\vartheta_\varepsilon$ satisfy the boundary condition defined by $u$, $\np$ at the point $x_\varepsilon \in (a,c)$. Let $T_{\varepsilon}$ be the self-adjoint realization of $\tau|_{(x_\varepsilon, c)}$ satisfying the $\phi_\varepsilon$-boundary condition at $x=x_\varepsilon$ and the Dirichlet boundary condition at $x= c$. Associated to $T_{\varepsilon}$ let $m_\varepsilon$ be its Weyl $m$-function. Then  
    \begin{align*}
        \vartheta_\varepsilon(z,x) + m_\varepsilon(z) \phi_\varepsilon(z,x) 
    \end{align*}
    will satisfy the Dirichlet boundary condition at $x = c$ for all $z \in \C \setminus \sigma(T_{\varepsilon})$. 

    Note that $\phi_\varepsilon(\lambda, x) = u(x)$ and $\vartheta_\varepsilon(\lambda, x) = \np(x)$. If necessary, we can perturb the endpoint $c$ such that $u$ does not vanish at $x = c$. Then it follows that $\np(x) + m_\varepsilon(\lambda) u(x)$ satisfies the Dirichlet boundary condition at $x = c$, hence $m_\varepsilon(\lambda) = m \in \R$ is independent of $\varepsilon$. In particular we can now write
    \begin{align*}
    \textrm{Tr}_{L^2((x_\varepsilon,c);r(x)dx)}\big((T_{\varepsilon}-\lambda I)^{-1}\big)&=\int_{x_\varepsilon}^c u(x)[\np(x)+mu(x)]r(x) dx,
    \\
    &= \sum_{n=1}^\infty\frac{1}{\lambda_n^\varepsilon - \lambda}, \qquad \lambda^\varepsilon_n \in \sigma(T_{\varepsilon}).
    \end{align*}
    
    Denote by $\lambda_n^{\varepsilon, D} \in \sigma(T_{\varepsilon, D})$ the eigenvalues of the Dirichlet (i.e., Friedrichs) extension corresponding to $\tau|_{(x_\varepsilon,c)}$. Then we know that $\lambda^\varepsilon_n \leq \lambda_n^{\varepsilon,D}$. If we for simplicity assume that $\lambda < \lambda_n^\varepsilon$ for all $n$ (only finitely many $\lambda_n^\varepsilon$ could violate this), we conclude from above that  
    \begin{align*}
    \textrm{Tr}_{L^2((x_\varepsilon,c);r(x)dx)}\big((T_\varepsilon-\lambda I)^{-1}\big) \geq \sum_{n=1}^\infty\frac{1}{\lambda_n^{\varepsilon,D} - \lambda}, \qquad \lambda^{\varepsilon, D}_n \in \sigma(T_{\varepsilon, D}).
\end{align*}
Now, let $\lbrace \lambda_n \rbrace_{n = 1}^\infty = \sigma(T_F)$ be the eigenvalues of the Friedrichs extension of $\tau$. We can disregard any continuous spectrum of $T_F$, as its presence would already violate Hypothesis \ref{HypoGZ} which we assume. Then it follows from \cite[Thm.~10.8.2]{Ze05} that $\lim_{\varepsilon \to 0} \lambda_n^{\varepsilon,D} = \lambda_n$ for all $n \in \N_+$. As $(\lambda_n^{\varepsilon, D} - \lambda)^{-1} > 0$ by assumption, it follows through an application of Fatou's Lemma that
\begin{align*}
    \liminf_{\varepsilon \to 0} \int_{x_\varepsilon}^c u(x)[\np(x)+mu(x)]r(x) dx &\geq \liminf_{\varepsilon \to 0} \sum_{n=1}^\infty\frac{1}{\lambda_n^{\varepsilon,D} - \lambda}\geq \sum_{n=1}^\infty \frac{1}{\lambda_n - \lambda}.
\end{align*}
Hence if Hypothesis \ref{Hypothesis} holds at $x=a$ for $\lambda$, the infinite sum on the right must converge, implying Hypothesis \ref{hypothesisTrace} holds. This finishes the proof.
\end{proof}

\medskip

\noindent{\bf Acknowledgments.} We are thankful to Fritz Gesztesy and Gerald Teschl for fruitful discussions regarding the topics of the present manuscript, and to Benjamin Eichinger for bringing the papers \cite{LW23} and \cite{WW14} to our attention. We are also thankful for funding from The Ohio State University (OSU) for M.P.'s visit to OSU where a portion of this project was completed. M.P. was supported by the Methusalem grant METH/21/03 – long term structural funding of the Flemish Government.

\end{document}